\newtheorem{mydef}{Definition} 
\newtheorem{thm}{Theorem} 
\newtheorem*{main}{Main Lemma}
\newtheorem{prop}{Proposition}
\newtheorem{corr}{Corollary} 
\newtheorem{lemma}{Lemma}
\theoremstyle{remark}
\newtheorem{rem}{Remark}
\newtheorem{question}{Question}
\newtheorem{example}{Example} 
\numberwithin{equation}{section}
\DeclareMathOperator{\ord}{ord}
\DeclareMathOperator{\mult}{mult}
\DeclareMathOperator{\ind}{index}
\DeclareMathOperator{\wideg}{wideg}
\DeclareMathOperator{\Z}{\mathbb{Z}}
\DeclareMathOperator{\F}{\mathbb{F}}
\DeclareMathOperator{\N}{\mathbb{N}}
\DeclareMathOperator{\C}{\mathbb{C}}
\DeclareMathOperator{\Q}{\mathbb{Q}}
\DeclareMathOperator{\frob}{Frob}
\DeclareMathOperator{\K}{\mathbb{K}}
\DeclareMathOperator{\resit}{r\acute{e}sit}
\DeclareMathOperator{\MultiInd}{\iota}
\newcommand{\partn}[1]{{\smallskip \noindent \textbf{#1.}}}
\renewcommand{\=}{\coloneqq}
\newcommand{\dd}{\hspace{1pt}\operatorname{d}\hspace{-1pt}}
\newcommand{\al}{\widehat{\alpha}}
\newcommand{\be}{\widehat{\beta}}
\newcommand{\ga}{\widehat{\gamma}}
\newcommand{\cgamma}{\check{\gamma}}
\title{Residue fixed point index and wildly ramified power series}
\begin{document}

\author{Jonas Nordqvist}
\address{Department of Mathematics, Linnaeus University, V{\"a}xj{\"o}, Sweden}
\email{jonas.nordqvist@lnu.se}

\author{Juan Rivera-Letelier}
\address{Department of Mathematics, University of Rochester. Hylan Building, Rochester, NY~14627, U.S.A.}
\email{riveraletelier@gmail.com}
\urladdr{http://rivera-letelier.org/}

\begin{abstract}
  In this paper, we study power series having a fixed point of multiplier~$1$.
  First, we give a closed formula for the residue fixed point index, in terms of the first coefficients of the power series.
  Then, we use this formula to study wildly ramified power series in positive characteristic.
  Among power series having a multiple fixed point of small multiplicity, we characterize those having the smallest possible lower ramification numbers in terms of the residue fixed point index.
  Furthermore, we show that these power series form a generic set, and, in the case of convergent power series, we also give an optimal lower bound for the distance to other periodic points.
\end{abstract}

\subjclass[2010]{Primary 11S82, 37P05, 37P10;
  Secondary 11S15} 

\maketitle

\section{Introduction}

Consider an open subset~$U$ of~$\C$ and a holomorphic map~$f \colon U \to \C$.
For a fixed point~$z_0$ of~$f$, the derivative~$f'(z_0)$ is invariant under coordinate changes.
In the case~$z_0$ is isolated as a fixed point of~$f$, a related invariant is defined by the countour integral
\begin{equation}
  \label{eq:19}
  \ind(f, z_0)
  \=
  \frac{1}{2\pi i} \oint \frac{\dd z}{z - f(z)},
\end{equation}
where we integrate on a sufficiently small simple closed curve around~$z_0$ that is positively oriented.
The complex number~\eqref{eq:19} is invariant under coordinate changes and is called the \emph{residue fixed point index of~$f$ at~$z_0$}.
Together with the related holomorphic fixed point formula, it is one of the basic tools in complex dynamics, see, \emph{e.g.}, \cite[\S12]{Mil06c} for background, and \cite{BuffEpstein2002, Buff03, BuffXavEcalle2013} for some results where the residue fixed point index plays an important r{\^o}le.
See also~\cite[Exercise~5.10]{Silverman2007} for an extension to an arbitrary ground field.

In the case~$f'(z_0) \neq 1$, a direct computation shows that~\eqref{eq:19} is equal to~$\frac{1}{1 - f'(z_0)}$.
We give a closed formula for~\eqref{eq:19} in the case~$f'(z_0) = 1$, in terms of the first coefficients of the power series expansion of~$f$ about~$z_0$ (Theorem~\ref{thm:closed-formula} in~\S\ref{sec:closed-formula}).
This formula holds for an arbitrary ground field.
We also show that the residue fixed point index is invariant under coordinate changes, and use it to study normal forms.
We also study the behavior of the residue fixed point under iteration.

In our succeeding results, we restrict to ground fields of positive characteristic and power series having the origin as a fixed point of multiplier~$1$.
Such power series are called \emph{wildly ramified}.\footnote{This terminology arises from the study of field automorphisms.
  Every power series~$f$ with coefficients in a field~$\K$ that satisfies~$f(0) = 0$ and~$f'(0) = 1$, defines a field automorphism of~$\K[[t]]$ given by~$g \mapsto g \circ f$.
  When~$\K$ is of positive characteristic, this type of field automorphism is traditionally known as \emph{wildly ramified}, due to the behavior of its associated ramification numbers.}
See, \emph{e.g.}, \cite{Sen1969,Keating1992,LaubieSaine1998,Win04} for background on wildly ramified power series, \cite{KallalKirkpatrick2019,LaubieMovahhediSalinier2002,LindahlNordqvist2018,LindahlRiveraLetelier2013,LindahlRiveraLetelier2015,Fransson2017,RiveraLetelier2003} for results related to this paper, and \cite{HermanYoccoz1983, Lindahl2004, LindahlZieve2010, Ruggiero2015} and references therein for local dynamics of analytic germs in positive characteristic.
See also, \emph{e.g.}, \cite{johnson1988,Camina2000} and references therein, for the myriad of group-theoretic results about the ``Nottingham group'', which is the group under composition formed by the wildly ramified power series.

Every wildly ramified power series has associated a sequence of ``lower ramification'' numbers.
It encodes the multiplicity of the origin for the iterates of the power series.
We study the lower ramification numbers of power series for which the multiplicity at the origin is small.
First, we characterize those power series having the smallest possible lower ramification numbers.
They are characterized by the nonvanishing of {\'E}calle's ``iterative residue'', which is a dynamical version of the residue fixed point index (Theorem~\ref{thm:q-ramification} in~\S\ref{sec:q-ramification}).
As a consequence, we obtain that these power series form a generic set.
In the case of convergent power series, we also give an optimal lower bound for the distance to other periodic points (Theorem~\ref{thm:lower-bound} in~\S\ref{sec:lower-bound}).
This gives an affirmative solution to~\cite[Conjecture~1.2]{LindahlRiveraLetelier2013}, for generic multiple fixed points of a fixed and small multiplicity, and to~\cite[Conjecture~4.3]{KallalKirkpatrick2019}.

We proceed to describe our results more precisely.

\subsection{Closed formula for the residue fixed point index}
\label{sec:closed-formula}
Our first result is a closed formula for the residue fixed point index of a fixed point of multiplier~$1$.
We allow an arbitrary ground field, and an arbitrary power series about a fixed point.
In particular, we allow non-convergent power series.
To simplify the notation, throughout the rest of the paper we restrict to the case of a power series~$f$ fixing the origin, and denote~$\ind(f, 0)$ by~$\ind(f)$.

\begin{mydef}
  Let~$\K$ be a field and~$f$ a power series with coefficients in~$\K$ satisfying~$f(0)=0$ and~$f(z)\neq z$.
  The \emph{residue fixed point index of~$f$ at~$0$}, denoted by~$\ind(f)$, is the coefficient of~$\frac{1}{z}$ in the Laurent series expansion about~$0$ of
  \[ \frac{1}{z-f(z)}. \]
\end{mydef}

Clearly, this definition agrees with~\eqref{eq:19} in the case where~$\K = \C$, $z_0 = 0$, and~$f$ is holomorphic on a neighborhood of~$0$.

To state our first result, denote by $\mathbb{N}$ the set of nonnegative integers and for an integer~$q \ge 1$ and~$(\MultiInd_0, \ldots, \MultiInd_q)$ in~$\N^{q + 1}$, define
\begin{displaymath}
  |(\MultiInd_0, \ldots, \MultiInd_q)| \= \sum_{j = 0}^{q} \MultiInd_j
  \text{ and }
  \| (\MultiInd_0, \ldots, \MultiInd_q) \| \= \sum_{j = 1}^q j \MultiInd_j.
\end{displaymath}

\begin{thm}[Residue fixed point index formula]
  \label{thm:closed-formula}
  Let $\K$ be a field, $q\geq 1$ an integer, and~$f$ a power series with coefficients in~$\K$ of the form
  \begin{equation}
    \label{psform}f(z)
    =
    z\left(1 + \sum_{j=q}^{+\infty} a_jz^j\right), \text{ with } a_q \neq 0.
  \end{equation}
  Then we have
  \begin{equation}
    \label{eq:closed-formula}
    \ind(f)
    =
    - \frac{1}{a_q^{q+1}}\sum_{\substack{\MultiInd \in \N^{q+1} \\ |\MultiInd| = q, \| \MultiInd \| = q }} (-1)^{q-\MultiInd_0}\binom{q-\MultiInd_0}{\MultiInd_{1},\ldots,\MultiInd_{q}}\prod_{j = 0}^q a_{q + j}^{\iota_j}.
  \end{equation}
\end{thm}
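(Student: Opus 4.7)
My plan is to prove the formula by a direct power-series computation, using the definition of $\ind(f)$ as the coefficient of $1/z$ in the Laurent expansion of $1/(z-f(z))$.

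First I would rewrite the denominator. From \eqref{psform} we get
\[
  z - f(z) = -z^{q+1}\Bigl(a_q + \sum_{j=1}^{\infty} a_{q+j}\, z^j\Bigr)
  = -a_q\, z^{q+1}\bigl(1 + u(z)\bigr),
\]
where $u(z) = \sum_{j=1}^{\infty} (a_{q+j}/a_q)\, z^j$ is a power series vanishing at $0$. Hence
\[
  \frac{1}{z-f(z)} = -\frac{1}{a_q\, z^{q+1}}\cdot \frac{1}{1+u(z)},
\]
and the coefficient of $1/z$ on the left equals $-a_q^{-1}$ times the coefficient of $z^q$ in $1/(1+u(z))$.

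Next I would expand $1/(1+u)$ as the convergent-in-the-$(z)$-adic-topology series $\sum_{k\ge 0}(-1)^k u(z)^k$, and extract the coefficient of $z^q$ by the multinomial theorem. Writing $b_j \= a_{q+j}/a_q$ for $j\ge 1$, one has
\[
  [z^q]\, u^k \;=\; \sum_{\substack{(\iota_1,\ldots,\iota_q)\in \N^q \\ \iota_1+\cdots+\iota_q = k,\ \iota_1 + 2\iota_2+\cdots+q\iota_q = q}} \binom{k}{\iota_1,\ldots,\iota_q} \prod_{j=1}^q b_j^{\iota_j}.
\]
Note that any such tuple automatically satisfies $k=\sum_j \iota_j \le \sum_j j\iota_j = q$, so only finitely many values of $k$ contribute; this also handles convergence issues when $\K$ is arbitrary.

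Finally I would repackage the sum to match the statement. Introducing the auxiliary index $\iota_0 \= q-k \ge 0$ turns the summation set into $\{\iota \in \N^{q+1} : |\iota|=q,\ \|\iota\|=q\}$, the sign $(-1)^k$ becomes $(-1)^{q-\iota_0}$, and the multinomial becomes $\binom{q-\iota_0}{\iota_1,\ldots,\iota_q}$. Combining with $b_j^{\iota_j} = a_{q+j}^{\iota_j}/a_q^{\iota_j}$ and $\sum_{j=1}^q \iota_j = q-\iota_0$, the overall prefactor $-a_q^{-1}$ together with the $a_q^{-(q-\iota_0)}$ from the $b_j$'s combines with an artificial factor $a_q^{\iota_0}$ to yield $-a_q^{-(q+1)}\prod_{j=0}^{q} a_{q+j}^{\iota_j}$, giving~\eqref{eq:closed-formula}.

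There is no genuine obstacle here: the argument is a bookkeeping exercise combining geometric-series inversion and the multinomial theorem. The only delicate point is the reindexing via $\iota_0$, which must be verified to be a bijection between the natural index set for the $(1+u)^{-1}$ expansion and the set $\{\iota \in \N^{q+1} : |\iota|=q,\ \|\iota\|=q\}$ appearing in the statement; this follows at once from the inequality $k \le q$ noted above.
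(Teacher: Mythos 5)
Your proof is correct and follows essentially the same path as the paper: factor out $-a_q z^{q+1}$, expand the resulting $1/(1+u)$ as a geometric series, apply the multinomial theorem to each power, and reindex via $\iota_0 = q - k$. The paper's Lemma~\ref{sumlittlea} performs the same geometric-series inversion (with the $a_q$ powers distributed slightly differently in the bookkeeping), so the two arguments differ only cosmetically.
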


We also show that the residue fixed point index is invariant under coordinate changes (Proposition~\ref{conj} in~\S\ref{invarresidue}) and use the residue fixed point index to study normal forms (Proposition~\ref{nf} in~\S\ref{sec:nf}).
Both of these results, together with Theorem~\ref{thm:closed-formula}, are used to prove our results below.
In Appendix~\ref{app:resit} we use Theorem~\ref{thm:closed-formula} to study the behavior under iterations of the residue fixed point index, and of the closely related ``iterative residue'' defined below.

\subsection{Wildly ramified power series}
\label{sec:q-ramification}
Let~$\K$ be a field, and~$f$ a power series with coefficients in~$\K$ such that~$f(0) = 0$ and~$f(z) \neq z$.
The \emph{multiplicity of~$0$ as a fixed point of~$f$} is the lowest degree of a nonzero term in $f(z) - z$.
We denote it by~$\mult(f)$.

From now on we assume the characteristic~$p$ of~$\K$ is positive.
The power series~$f$ is \emph{wildly ramified} if~$\mult(f) \ge 2$, or equivalently, if~$0$ is a multiple fixed point of~$f$.
Note that~$f$ is wildly ramified if and only if~$f'(0) = 1$.
For a wildly ramified power series~$f$, the \emph{lower ramification numbers~$\{ i_n(f) \}_{n = 0}^{+\infty}$ of~$f$} are defined by
\[ i_n(f) \= \mult(f^{p^n})-1. \]
See, \emph{e.g.}, \cite{Sen1969,Keating1992,LaubieSaine1998,Win04} and references therein for background on wildly ramified power series and their lower ramification numbers.
Due to their relation to ultrametric dynamics, they have been studied in, \emph{e.g.}, \cite[\S3.2]{RiveraLetelier2003}, \cite{LindahlRiveraLetelier2015,LindahlRiveraLetelier2013,LindahlNordqvist2018}.
Note that the lower ramification numbers are invariant under coordinate changes.

If we put
\begin{displaymath}
  q \= \mult(f) - 1 \ge 1,
\end{displaymath}
then the results of Sen in~\cite{Sen1969} imply that, in the case~$q \le p - 1$, for every integer~$n \ge 0$ we have
\begin{equation}
  \label{eq:q-minimal}
  i_n(f) \ge q (1+p+ \cdots +p^n),  
\end{equation}
see Proposition~\ref{prop:qramifisminramif} in~\S\ref{sec:lowramif}.
Following~\cite{Fransson2017}, for an integer~$q \ge 1$ that is not divisible by~$p$, we say that~$f$ is \emph{$q$-ramified} if equality holds in~\eqref{eq:q-minimal} for every~$n$.
In the case~$q = 1$, $1$-ramified power series are also known as ``minimally ramified'' \cite{LaubieMovahhediSalinier2002,LindahlRiveraLetelier2013,LindahlRiveraLetelier2015}.
$q$-Ramified power series appear naturally as reductions of invertible elements of formal groups, see for example~\cite[\emph{Proposition}~4.2]{LaubieMovahhediSalinier2002} for the case~$q = 1$, and~\cite[\emph{Corollaire}~3.12]{LaubieMovahhediSalinier2002} for general~$q$ not divisible by~$p$. 
Note that when~$q$ is divisible by~$p$, for every~$n \ge 1$ we have~$i_n(f) = i_0(f)p^n$ \cite{Sen1969}, so we cannot have equality in~\eqref{eq:q-minimal}.

Our next result characterizes $q$-ramified power series when~$q \le p - 1$, and shows that $q$-ramified power series are generic among power series having the origin as a fixed point of multiplicity~$q + 1$.
We restrict to odd~$p$, as the case~$p = 2$ is treated in~\cite{LindahlRiveraLetelier2015,LindahlRiveraLetelier2013}.
As in~\cite[Theorem~E]{LindahlRiveraLetelier2013}, our characterization is best stated in terms of the ``iterative residue'', which is a dynamical variant of the residue fixed point index introduced by {\'E}calle in the complex setting.
For a power series~$f$ satisfying ${f(0) = 0}$ and ${f(z) \neq z}$, the \emph{iterative residue of~$f$} is defined by\footnote{We keep {\'E}calle's notation ``$\resit$'', an abbreviation of the French ``\emph{r{\'e}sidue it{\'e}ratif}''.} 
\begin{equation}
  \label{eq:17}
  \resit(f)
  \=
  \frac{1}{2}\mult(f) - \ind(f).
\end{equation}
See, \emph{e.g.}, \cite[\S{I}]{Ecalle1975},  or~\cite[\S12]{Mil06c} for background on the iterative residue.

\begin{thm}[$q$-ramified power series]
  \label{thm:q-ramification}
  Let~$p$ be an odd prime number and $\K$ a field of characteristic~$p$.
  Furthermore, let~$q$ be in~$\{1, \ldots, p - 1 \}$, and let~$f$ be a power series with coefficients in~$\K$ satisfying~$\mult(f) = q+1$.
  Then~$f$ is $q$-ramified if and only if $\resit(f) \neq 0$.
\end{thm}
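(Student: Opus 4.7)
The plan is to reduce the equivalence to a base-case computation at level $n = 1$ via the normal form of Proposition~\ref{nf}, and then propagate the conclusion to all higher $n$ using the iteration results for $\ind$ developed in Appendix~\ref{app:resit}.

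For the base case, I would first conjugate $f$ to the normal form provided by Proposition~\ref{nf}; by Proposition~\ref{conj}, neither $\resit(f)$ nor the lower ramification numbers are affected by this conjugation. In the normal form, $f$ is determined by a minimal set of coefficients that, via Theorem~\ref{thm:closed-formula}, transparently encode $a_q$ and $\resit(f)$. I would then compute $f^p(z) - z$ directly in this normal form and extract the coefficient of $z^{pq+q+1}$. The hypothesis $q \in \{1, \ldots, p - 1\}$ enters here to guarantee that the relevant binomial coefficients modulo~$p$ do not force unwanted degeneracies, and the resulting leading term should be a nonzero scalar multiple of $\resit(f)$ times a power of $a_q$. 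This establishes the equivalence
\[
  i_1(f) = q(p+1) \Longleftrightarrow \resit(f) \neq 0,
\]
which immediately yields the implication $(\Rightarrow)$ of the theorem, since $q$-ramified forces $i_1(f) = q(p+1)$.

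For the hard direction $(\Leftarrow)$, assuming $\resit(f) \neq 0$ one must show that $i_n(f)$ attains Sen's lower bound for every $n \ge 1$, not just $n = 1$. The natural strategy is induction, applying the base-case argument to $f^{p^n}$ at each level. Here the main obstacle is that the ``$q$-value'' $\mult(f^{p^n}) - 1$ grows like $qp^n$, so the iterates fall outside the hypothesis $q \le p - 1$ of the theorem and the base-case computation cannot be invoked verbatim. To close this gap I would rely on the iteration identities for $\ind$ (and hence for $\resit$) established in Appendix~\ref{app:resit}, together with Proposition~\ref{conj}, to transport the nonvanishing residue condition from $f$ to each iterate $f^{p^n}$, and then apply an appropriately refined version of the base-case computation at the larger ``$q$-values'' that appear. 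The propagation of the nonvanishing condition through successive iterates is where I expect the real difficulty to lie; by contrast, the base-case coefficient extraction — combinatorially intricate though it is — is exactly what Theorem~\ref{thm:closed-formula} is designed to handle.
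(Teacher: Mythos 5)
Your base-case analysis is correct and matches the paper's strategy: conjugate to the normal form of Proposition~\ref{nf}, compute the relevant coefficient of $f^p(z)-z$, and identify it (up to a nonzero factor) with $\resit(f)$; this gives the equivalence $i_1(f) = q(p+1) \Leftrightarrow \resit(f) \neq 0$, and the forward implication of the theorem follows. The gap is in your plan for the converse direction. You propose to close the induction by ``transport[ing] the nonvanishing residue condition from $f$ to each iterate $f^{p^n}$'' via the iteration identities of Appendix~\ref{app:resit}. But Proposition~\ref{resitcharp} (the only tool there) asserts $\resit(f^n) = \frac{1}{n}\resit(f)$ \emph{only for $n$ not divisible by the characteristic}; the proof divides by $n$, and the formula is simply meaningless for $n = p^k$. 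So Appendix~\ref{app:resit} provides no information whatever about $\resit(f^{p^n})$ and cannot carry out the transport you describe. Moreover, even if one could control $\resit(f^{p^n})$, the iterate $f^{p^n}$ has multiplicity $1 + q(1+p+\cdots+p^n)$, which falls strictly outside the range $q \le p-1$ in which your base-case computation (and Proposition~\ref{nf}) applies — you acknowledge this but do not actually supply the ``appropriately refined version'' that would be needed.

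The paper closes this gap in two genuinely different ways, neither of which appears in your proposal. The short proof (\S\ref{sec:proof of ramification}) avoids the induction altogether by invoking Laubie--Sa\"{\i}ne's Corollary~1 (Proposition~\ref{laubiecorr}): once $p \nmid i_0(f)$ and $i_1(f) < (p^2-p+1)\,i_0(f)$ hold, the entire sequence $\{i_n(f)\}$ is forced, so verifying $i_1(f) = q(p+1)$ already yields $q$-ramification. The self-contained proof (\S\ref{s:periodic points}) does run the induction you have in mind, but its key technical input — the Main Lemma — is formulated for general $q$ not divisible by $p$ and, crucially, for a second significant coefficient sitting at offset $\ell$ (with $\ell \equiv q \pmod p$, $\ell \le p-1$) rather than at offset $q$; this mismatch of offsets is exactly what occurs for $f^{p^n}$, where the first two nontrivial exponents differ by $q$ while the multiplicity is much larger. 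The recursion relations in the $q \ge p+1$ branch of the Main Lemma are different from, and considerably simpler than, those in the $q \le p-1$ branch. Without either Laubie--Sa\"{\i}ne or the $\ell$-parameter generalization, your inductive step does not close.
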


Let~$q \ge 1$ be an integer, $x_q$, $x_{q + 1}$, \ldots indeterminates over~$\K$, and consider the generic power series
\begin{displaymath}
  f(\zeta) \= \zeta \left( 1 + \sum_{j = q}^{+ \infty} x_j \zeta^j \right).
\end{displaymath}
Then by Theorem~\ref{thm:closed-formula}, $x_q^{q + 1} \resit(f)$ is equal to

\begin{equation}
  \label{eq:genericity polynomial}
  \left( \frac{q + 1}{2} \right) x_q^{q + 1} + \sum_{\substack{\MultiInd \in \N^{q+1} \\ |\MultiInd| = q, \| \MultiInd \| = q }} (-1)^{q-\MultiInd_0}\binom{q-\MultiInd_0}{\MultiInd_{1},\ldots,\MultiInd_{q}}\prod_{j = 0}^q x_{q + j}^{\iota_j},
\end{equation}
which is a polynomial in~$x_q$, $x_{q + 1}$, \ldots, $x_{2q}$ with coefficients in~$\F_p$.\footnote{Note that this polynomial is isobaric of degree~$q(q + 1)$.}
Thus, the following corollary is a direct consequence of Theorem~\ref{thm:q-ramification}.

\begin{corr}
  \label{c:genericity}
  Let~$p$ be an odd prime number, $\K$ a field of characteristic~$p$, and~$q$ in~$\{1, \ldots, p - 1 \}$.
  Then, among power series with coefficients in~$\K$ for which the origin is a fixed point of multiplicity~$q + 1$, those that are $q$-ramified are generic.
\end{corr}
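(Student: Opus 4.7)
The plan is to reduce the corollary to the single observation that the polynomial displayed in~\eqref{eq:genericity polynomial} is not identically zero in $\F_p[x_q, \ldots, x_{2q}]$; once this is shown, genericity is automatic.

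First, by Theorem~\ref{thm:q-ramification}, a power series $f$ with coefficients in $\K$ satisfying $\mult(f) = q+1$ is $q$-ramified if and only if $\resit(f) \neq 0$. Writing $f(z) = z\bigl(1 + \sum_{j \ge q} a_j z^j\bigr)$ with $a_q \neq 0$, the observation preceding the corollary statement shows that $a_q^{q+1}\resit(f)$ equals the evaluation at $(a_q, \ldots, a_{2q})$ of the polynomial $P \in \F_p[x_q, \ldots, x_{2q}]$ given by~\eqref{eq:genericity polynomial}. Consequently, among power series with $\mult(f) = q+1$, those failing to be $q$-ramified are cut out inside the parameter space $\{a_q \neq 0\}$ by the single polynomial condition $P(a_q, \ldots, a_{2q}) = 0$. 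So, once we know $P \not\equiv 0$, the $q$-ramified power series form the complement of a proper Zariski-closed condition on finitely many coefficients, which is the intended meaning of generic.

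To verify that $P \not\equiv 0$, I exhibit a single monomial with nonzero coefficient. Consider the multi-index $\iota = (0, q, 0, \ldots, 0) \in \N^{q+1}$; it satisfies $|\iota| = q$ and $\|\iota\| = q$, so it appears in the sum defining $P$, contributing $(-1)^{q}\binom{q}{q,0,\ldots,0}\,x_{q+1}^{q} = (-1)^q x_{q+1}^q$. The monomial $x_{q+1}^q$ is not a power of $x_q$, so it is disjoint from the leading term $\frac{q+1}{2}\,x_q^{q+1}$, and no other multi-index in the sum produces $x_{q+1}^q$, since the assignment $\iota' \mapsto \prod_j x_{q+j}^{\iota'_j}$ is injective. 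Hence the coefficient of $x_{q+1}^q$ in $P$ equals $(-1)^q = \pm 1 \neq 0$ in $\F_p$, establishing $P \not\equiv 0$.

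The main obstacle is essentially nil, but one subtlety is worth flagging: the coefficient $\frac{q+1}{2}$ of the monomial $x_q^{q+1}$ vanishes in $\F_p$ precisely in the edge case $q = p-1$, so the leading term alone cannot be used to show $P \not\equiv 0$. The argument above sidesteps this by isolating the coefficient of $x_{q+1}^q$, which is $\pm 1$ for every $q \in \{1, \ldots, p-1\}$, so the same proof handles every value of $q$ uniformly.
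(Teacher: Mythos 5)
Your proposal is correct and follows the same route as the paper: invoke Theorem~\ref{thm:q-ramification} to reduce $q$-ramification to the non-vanishing of~$\resit(f)$, observe via~\eqref{eq:genericity polynomial} that $a_q^{q+1}\resit(f)$ is a fixed polynomial $P$ in $a_q,\ldots,a_{2q}$ over~$\F_p$, and conclude that the $q$-ramified locus is the complement of the hypersurface $P=0$ inside $\{a_q\neq 0\}$. The one thing you add that the paper leaves implicit is the check that $P\not\equiv 0$; your argument --- isolating the multi-index $\iota=(0,q,0,\ldots,0)$, whose contribution $(-1)^q x_{q+1}^q$ cannot be cancelled because $\iota\mapsto\prod_j x_{q+j}^{\iota_j}$ is injective and $x_{q+1}^q$ does not match $x_q^{q+1}$ --- is correct, and you rightly flag that one cannot rely on the coefficient $\frac{q+1}{2}$ of $x_q^{q+1}$ since it vanishes when $q=p-1$. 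This is a small but genuine improvement in rigor rather than a different approach; the paper instead leaves the non-triviality of $P$ to be inferred (for example from the explicit $q$-ramified power series in Example~\ref{e:optimality}).
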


The following corollary is essentially a reformulation of the previous corollary in terms of the \emph{Nottingham group~$\mathcal{N}(\K)$}, which is the group under composition formed by all wildly ramified power series with coefficients in~$\K$.
Since the work of Johnson~\cite{johnson1988}, this group has been extensively studied for its interesting group-theoretic properties.
See for instance the survey article~\cite{Camina2000}.

Given an integer~$q \ge 1$, consider the subgroup of~$\mathcal{N}(\K)$,
\begin{displaymath}
  \mathcal{N}_q(\K)
  \=
  \{ f \text{ power series with coefficients in~$\K$ satisfying~$\mult(f) \ge q + 1$} \}.
\end{displaymath}
Note that in the case~$q = 1$, we have~$\mathcal{N}_1(\K) = \mathcal{N}(\K)$.

\begin{corr}
  Let~$p$ be an odd prime number, $\K$ a field of characteristic~$p$, and~$q$ in~$\{1, \ldots, p - 1 \}$.
  Then, an element~$f$ of~$\mathcal{N}_q(\K)$ is $q$-ramified if and only if~$\resit(f) \neq 0$. 
  In particular, $q$-ramified power series are generic in~$\mathcal{N}_q(\K)$.
\end{corr}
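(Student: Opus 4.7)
The plan is to derive this corollary from Theorem~\ref{thm:q-ramification} and Corollary~\ref{c:genericity}, which already encode its content; the proof is essentially a translation into the language of the Nottingham subgroup. The key structural observation is that any $q$-ramified power series automatically has multiplicity $q+1$ at the origin: specializing the defining equality $i_n(f) = q(1 + p + \cdots + p^n)$ to $n = 0$ gives $i_0(f) = q$, equivalently $\mult(f) = q + 1$. Hence for every $q$-ramified $f \in \mathcal{N}_q(\K)$ the hypothesis $\mult(f) = q+1$ of Theorem~\ref{thm:q-ramification} is met, yielding $\resit(f) \neq 0$; and, within the locus $\{\mult(f) = q+1\}$, the same theorem gives the reverse implication $\resit(f) \neq 0 \Rightarrow f$ is $q$-ramified.

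For the genericity statement, I would observe that $\mathcal{N}_{q+1}(\K)$ is the closed subgroup of $\mathcal{N}_q(\K)$ cut out by the single linear condition $a_q = 0$ (in the expansion $f(z) = z(1 + \sum_{j \geq q} a_j z^j)$). Its complement in $\mathcal{N}_q(\K)$, namely the set of $f$ with $\mult(f) = q+1$, is therefore open and dense. Applying Corollary~\ref{c:genericity} inside this dense open locus identifies the $q$-ramified elements as the non-vanishing set of the polynomial~\eqref{eq:genericity polynomial}, yielding a generic subset of $\mathcal{N}_q(\K)$.

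The main point requiring verification is that the polynomial~\eqref{eq:genericity polynomial} is non-zero modulo $p$. When $q + 1 \not\equiv 0 \pmod{p}$ the leading monomial $\tfrac{q+1}{2}\, x_q^{q+1}$ already suffices. In the single boundary case $q = p - 1$, where this coefficient vanishes in $\F_p$, I would instead exhibit another non-zero monomial: for instance the multi-index $\iota = (q-1, 0, \ldots, 0, 1)$, with $\iota_0 = q-1$ and $\iota_q = 1$, contributes the monomial $-x_q^{q-1} x_{2q}$ with coefficient $-1 \not\equiv 0 \pmod{p}$, completing the check.
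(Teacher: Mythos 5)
Your proof follows the route the paper intends (the paper offers no details, describing the corollary as ``essentially a reformulation''): restate Theorem~\ref{thm:q-ramification} and Corollary~\ref{c:genericity} in the language of~$\mathcal{N}_q(\K)$, using the observation that $q$-ramification forces $i_0(f)=q$, hence $\mult(f)=q+1$. The explicit non-vanishing check of the polynomial~\eqref{eq:genericity polynomial}, including the boundary case $q=p-1$ via the monomial $-x_q^{q-1}x_{2q}$, is a worthwhile detail the paper leaves implicit, and your identification of $\mathcal{N}_q(\K)\setminus\mathcal{N}_{q+1}(\K)$ as the relevant dense locus is exactly right for the genericity conclusion.

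There is, however, a gap between what your argument establishes and what the first sentence of the corollary literally asserts. You prove the forward implication and, as you yourself phrase it, the reverse only ``within the locus $\{\mult(f)=q+1\}$.'' The corollary as written claims the equivalence for every $f\in\mathcal{N}_q(\K)$, but on $\mathcal{N}_{q+1}(\K)$ the reverse direction fails: for $p=5$, $q=1$, the polynomial $f(z)=z+z^3$ lies in $\mathcal{N}_1(\K)$ and has $\resit(f)=\tfrac{3}{2}\neq 0$ in $\F_5$ (since $z-f(z)=-z^3$ has no $z^{-1}$ term, $\ind(f)=0$), yet is not $1$-ramified because $i_0(f)=2$. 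So the blanket ``if and only if'' holds only on the open dense locus $\mult(f)=q+1$, which is what your argument actually treats; you should say so explicitly rather than leave the restriction tacit in a subordinate clause, since otherwise the reverse implication you quote from Theorem~\ref{thm:q-ramification} is being silently applied outside its hypotheses. The genericity conclusion is unaffected by this point.
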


This answers~\cite[Question~1.4]{KallalKirkpatrick2019} for~$q$ in~$\{1, \ldots, p - 1\}$.

In the case $q=1$, Theorem~\ref{thm:q-ramification} was shown by Lindahl and the second named author~\cite[Theorem~E]{LindahlRiveraLetelier2013}.
This last result also applies to the case~$p=2$, and asserts that a power series of the form~\eqref{psform} with~$q = 1$ is $1$-ramified if and only if
\begin{displaymath}
  \resit(f) \neq 0
  \text{ and }
  \resit(f) \neq 1.
\end{displaymath}
In the case $q=2$, Theorem~\ref{thm:q-ramification} was shown by the first named author~\cite[Theorem~1]{Fransson2017}, with~$\resit(f)$ replaced by~\eqref{eq:genericity polynomial}.
In the case~$q = 3$ and~$\K = \F_p$, Theorem~\ref{thm:q-ramification} was shown by Kallal and Kirkpatrick in the first version of~\cite{KallalKirkpatrick2019}, with~$\resit(f)$ replaced by~\eqref{eq:genericity polynomial}.
  After a preliminary version of this paper was completed, we received a new version of~\cite{KallalKirkpatrick2019} proving Theorem~\ref{thm:q-ramification} when restricted to those~$q$ satisfying~$q^2 < p$, and with~$\resit(f)$ replaced by~\eqref{eq:genericity polynomial}.

  Theorem~\ref{thm:q-ramification} and its corollaries are not expected to extend to the case~$q \ge p + 1$ not divisible by~$p$.
In fact, we give examples showing that the conclusion of Theorem~\ref{thm:q-ramification} is false for~$q = p + 1$, see Example~\ref{ex:p + 1} in~\S\ref{sec:furth-results-exampl}.
About genericity, if~$q \ge p + 1$ is not divisible by~$p$, then the results of Laubie and Sa{\"{\i}}ne in~\cite{LaubieSaine1998} imply that the inequality~\eqref{eq:q-minimal} fails in general, even for~$n = 1$.
Thus, for~$q \ge p + 1$ the $q$-ramified power series are not expected to be generic among power series having~$0$ as a fixed point of multiplicity~$q + 1$.
So, the following question arises naturally.

\begin{question}
  Let~$p$ be a prime number, $\K$ a field of characteristic~$p$, and~$q \ge p + 1$ an integer that is not divisible by~$p$.
  How are the lower ramification numbers of a generic power series in~$\mathcal{N}_q(\K)$? \footnote{Recently, the first named author answered this question completely in~\cite{Nor1909}.}

\end{question}

In the case~$q = p + 1$, it seems that for a generic power series satisfying~$\mult(f) = q + 1$, we have for every~$n \ge 0$
\begin{displaymath}
  i_n(f) = 1 + p + \cdots + p^{n + 1}.
\end{displaymath}
See also Example~\ref{ex:p + 1} in~\S\ref{sec:furth-results-exampl}, and the discussion following it.
\subsection{Periodic points of wildly ramified power series}
\label{sec:lower-bound}
Our next result is about the distribution of periodic points of a convergent $q$-ramified power series.
To state it, we introduce some notation.
Given an ultrametric field $(\K,|\cdot|)$, denote by
\begin{displaymath}
  \mathcal{O}_{\K}
  \=
  \{ \zeta \in \K : |\zeta| \le 1 \},
  \text{ and }
  \mathfrak{m}_{\K}
  \=
  \{ \zeta \in \K : |\zeta| < 1 \},
\end{displaymath}
the ring of integers of~$\K$ and the maximal ideal of~$\mathcal{O}_{\K}$, respectively.

\begin{thm}[Periodic points lower bound]
  \label{thm:lower-bound}
  Let~$p$ be an odd prime number, let~$q$ be in~$\{1, \ldots, p - 1 \}$, and let~$(\K, |\cdot|)$ be an ultrametric field of characteristic~$p$.
  Furthermore, let~$f$ be a power series with coefficients in~$\mathcal{O}_{\K}$ of the form
  \[f(\zeta)
    \equiv
    \zeta(1 + a\zeta^q) \mod \langle \zeta^{q+2} \rangle, \text{ with } a\neq0.\]
  Then, for every fixed point~$\zeta_0$ of~$f$ in~$\mathcal{O}_{\K}$ that is different from~$0$ we have $|\zeta_0|\geq |a|$, and for every periodic point~$\zeta_0$ of~$f$ in~$\mathcal{O}_{\K}$ that is not a fixed point, we have
  \begin{equation}\label{normbound}
    |\zeta_0|
    \ge
    |a| \cdot |\resit(f)|^\frac{1}{p}.
  \end{equation}
\end{thm}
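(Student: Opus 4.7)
\emph{Bound on fixed points.} From the factorization $f(\zeta)-\zeta = \zeta^{q+1}\bigl(a+\sum_{j\ge 1}a_{q+j}\zeta^j\bigr)$, any nonzero fixed point $\zeta_0\in\mathcal{O}_\K$ satisfies $a=-\sum_{j\ge 1}a_{q+j}\zeta_0^j$; since $|a_{q+j}|\le 1$ and $|\zeta_0|\le 1$, the ultrametric triangle inequality immediately yields $|a|\le\max_{j\ge 1}|\zeta_0|^j=|\zeta_0|$.

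\emph{Reduction for non-fixed periodic points.} The inequality~\eqref{normbound} is trivial when $\resit(f)=0$, so I assume $\resit(f)\ne 0$; by Theorem~\ref{thm:q-ramification}, $f$ is then $q$-ramified. The estimate $|a|\cdot|\resit(f)|^{1/p}\le 1$, which follows from Theorem~\ref{thm:closed-formula} together with $q\le p-1$, disposes of the case $|\zeta_0|=1$, so one may further assume $\zeta_0\in\mathfrak{m}_\K$. Comparing the multiplier $(f^m)'(\zeta_0)=\prod_{j=0}^{m-1}f'(f^j(\zeta_0))$, all of whose factors lie in $1+\mathfrak{m}_\K$, with a primitive $m$-th root of unity forces the minimal period~$m$ of~$\zeta_0$ to be a power~$p^n$ of~$p$. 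An induction on~$n$, using the iteration behaviour of the residue fixed point index developed in Appendix~\ref{app:resit} to relate the relevant invariants of $f^{p^{n-1}}$ to $a$ and $\resit(f)$, reduces the desired bound to the case when the minimal period is exactly~$p$.

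\emph{The period-$p$ case.} I would invoke Proposition~\ref{nf} to conjugate~$f$ (possibly after passing to a valued extension of~$\K$) into a convenient normal form keeping both $|a|$ and $|\resit(f)|$ unchanged. In that form, combining Theorem~\ref{thm:closed-formula} with the iteration identities of Appendix~\ref{app:resit} produces the leading-term expansion
\[ f^p(\zeta)-\zeta \;\equiv\; -\,c_{q,p}\,a^{p+1}\,\resit(f)\,\zeta^{qp+q+1}\pmod{\langle\zeta^{qp+q+2}\rangle} \]
for some explicit nonzero $c_{q,p}\in\F_p$. Since $f^p(\zeta)-\zeta\in\mathcal{O}_\K[[\zeta]]$, the leftmost vertex of its Newton polygon sits at $\bigl(qp+q+1,\,(p+1)v(a)+v(\resit(f))\bigr)$. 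Writing the algebraic factorization $f^p(\zeta)-\zeta = (f(\zeta)-\zeta)\,h(\zeta)$ in $\K[[\zeta]]$ and removing the contribution of the nonzero fixed points (already controlled by $v(\zeta_0)\le v(a)$ via the first part), a Newton polygon analysis of the quotient~$h(\zeta)$, whose nonzero roots are precisely the period-$p$ non-fixed points of~$f$, then forces $v(\zeta_0)\le v(a)+v(\resit(f))/p$ for every such root~$\zeta_0$, which is the inequality~\eqref{normbound}. The main obstacle is pinning down the leading coefficient displayed above: extracting the factor $\resit(f)$ from the combinatorics of the $p$-fold composition in characteristic~$p$ is delicate and is exactly where Theorem~\ref{thm:closed-formula} and the iteration results of Appendix~\ref{app:resit} are indispensable; once this identity is in hand, the Newton polygon step is essentially formal.
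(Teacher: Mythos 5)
Your argument for the fixed-point bound $|\zeta_0|\ge|a|$ is correct, and it is a pleasant elementary alternative to the paper's appeal to Lemma~\ref{lem24}(1). The reduction to $\resit(f)\neq 0$, to $\zeta_0\in\mathfrak{m}_{\K}$, and to minimal period a power of~$p$ is also fine. The trouble starts at the place you yourself flag as delicate.

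The central gap is your appeal to Appendix~\ref{app:resit} to control the $p$-fold iterate. Proposition~\ref{resitcharp} gives $\resit(f^n)=\tfrac1n\resit(f)$ \emph{only} for $n$ not divisible by the characteristic, and it obviously cannot be extended to $n=p$ (the right-hand side would require inverting~$p$). Lemma~\ref{fniter} likewise only shows $f^p(\zeta)-\zeta\equiv 0\bmod\langle p,\zeta^{2q+2}\rangle$, which says nothing about the actual leading term of $f^p-\mathrm{id}$. So neither the expansion
\[
  f^p(\zeta)-\zeta\equiv a^{p+1}\resit(f)\,\zeta^{q(p+1)+1}\pmod{\langle\zeta^{q(p+1)+2}\rangle}
\]
(which is the correct one, with your $c_{q,p}=-1$), nor the analogous control of $f^{p^{n-1}}$ that your induction on~$n$ needs, is available from the appendix. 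This expansion is precisely the content of the Main Lemma and Proposition~\ref{prop:deltaiterates}, which is where the real combinatorial work in the paper happens; without it there is no argument.

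Your induction scheme has a second, structural problem. Setting $g\=f^{p^{n-1}}$ and treating a period-$p^n$ point of~$f$ as a period-$p$ point of~$g$ is natural, but $\mult(g)-1=q(1+p+\cdots+p^{n-1})$ exceeds $p-1$ as soon as $n\ge 2$. So $g$ no longer satisfies the hypothesis $q\le p-1$ under which Theorem~\ref{thm:q-ramification}, Proposition~\ref{nf}, and your ``period-$p$'' analysis are formulated; the reduction does not stay inside the regime where your tools apply. The paper sidesteps this by proving, in Proposition~\ref{prop:deltaiterates}, a closed form for $\delta_n(f)$ valid for \emph{all}~$n$ simultaneously, and then quotes Lemma~\ref{lem24}(2) from \cite{LindahlRiveraLetelier2015} — which is exactly the clean packaging of the Newton-polygon-on-the-quotient argument you sketch — to get $|\zeta_0|\ge|\delta_n(f)/\delta_{n-1}(f)|^{1/p^n}=|a|\cdot|\resit(f)|^{1/p}$ in one step. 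If you supply the $p$-th-iterate computation (Main Lemma) and replace the $n$-induction by the explicit $\delta_n$ formula, your Newton-polygon outline becomes essentially the paper's proof.
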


We give explicit examples for which equality holds in~\eqref{normbound} for every periodic point that is not fixed, when~$q \le p - 3$ (Example~\ref{e:optimality} in~\S\ref{sec:furth-results-exampl}).
  We recall that by Theorem~\ref{thm:closed-formula} we can explicitly compute~$\resit(f)$, see also~\eqref{eq:genericity polynomial}, so the lower bound in Theorem~\ref{thm:lower-bound} is effective.
  Note also that the lower bound given by Theorem~\ref{thm:lower-bound} is trivial in the case that~$f$ is not $q$-ramified, because by Theorem~\ref{thm:q-ramification} we have $\resit(f) = 0$ in this case.

Note that every convergent power series about~$0$ without constant term is conjugated to a power series with coefficients in~$\mathcal{O}_{\K}$ by a scale change.
So, the following corollary is a direct consequence of Theorem~\ref{thm:lower-bound}.

\begin{corr}
  Let~$\K$ be an ultrametric field of positive characteristic, and let~$q \ge 1$ be an integer that is strictly smaller than the characteristic of~$\K$.
  Moreover, let~$f$ be a $q$-ramified power series with coefficients in~$\K$ that converges on a neighborhood of the origin.
  Then the origin is isolated as a periodic point of~$f$.
\end{corr}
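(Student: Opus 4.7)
The plan is to deduce the corollary directly from Theorems~\ref{thm:lower-bound} and~\ref{thm:q-ramification} after a preliminary scale change that brings the coefficients of $f$ into $\mathcal{O}_\K$. Write
\[ f(\zeta) = \zeta\Bigl(1 + \sum_{j=q}^{+\infty} a_j \zeta^j\Bigr), \qquad a_q \neq 0, \]
and note that since $f$ converges on a neighborhood of~$0$, we have $\limsup_{j \to \infty} |a_j|^{1/j} < +\infty$. Choose $c \in \K^\times$ with $|c|$ small enough that $|a_j c^j| \le 1$ for every $j \ge q$, and consider the conjugate power series $f_c(\zeta) \= c^{-1} f(c\zeta) = \zeta\bigl(1 + \sum_{j \ge q} a_j c^j \zeta^j\bigr)$. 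By construction $f_c$ has coefficients in $\mathcal{O}_\K$, and its first nonzero higher-order coefficient is $a \= a_q c^q \neq 0$, so $f_c$ is of the form required by Theorem~\ref{thm:lower-bound}.

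Next I would observe that the lower ramification numbers are invariant under coordinate changes, so $f_c$ is again $q$-ramified. Hence Theorem~\ref{thm:q-ramification} gives $\resit(f_c) \neq 0$, so the quantity $|a| \cdot |\resit(f_c)|^{1/p}$ appearing in the bound~\eqref{normbound} is strictly positive. Theorem~\ref{thm:lower-bound} then provides a strictly positive constant $\varepsilon > 0$ such that every periodic point $\zeta_0 \in \mathcal{O}_\K$ of $f_c$ different from~$0$ satisfies $|\zeta_0| \ge \varepsilon$ (using the fixed-point bound $|\zeta_0| \ge |a|$ in the fixed-point case and~\eqref{normbound} otherwise). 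Since periodic points are sent to periodic points by the conjugation $\zeta \mapsto c\zeta$, this means that every nonzero periodic point of $f$ lying in $c \, \mathcal{O}_\K$ has absolute value at least $|c| \cdot \varepsilon > 0$. In particular, the origin is isolated as a periodic point of~$f$, as claimed.

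I do not anticipate any real obstacle here: the argument is a straightforward packaging of the two main theorems. The only point that requires a moment of care is the choice of the scaling constant $c$, which must simultaneously satisfy the coefficient bound $|a_j c^j| \le 1$ for all $j \ge q$; this is where convergence of $f$ on a neighborhood of the origin is used in an essential way, as nonconvergent $q$-ramified series need not satisfy such a bound.
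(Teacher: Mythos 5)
Your argument matches the paper's one-line proof of this corollary: conjugate by a scale change $\zeta \mapsto c\zeta$ to bring the coefficients into $\mathcal{O}_{\K}$, invoke Theorem~\ref{thm:q-ramification} to see that $q$-ramification forces $\resit(f_c)\neq 0$, and then read off a positive lower bound on the norm of nonzero periodic points from Theorem~\ref{thm:lower-bound}. The only caveat, which the paper's terse wording shares with yours, is that Theorems~\ref{thm:q-ramification} and~\ref{thm:lower-bound} are stated for odd~$p$, so the characteristic-$2$ case (which forces $q=1$) must instead appeal to \cite[Theorem~B]{LindahlRiveraLetelier2015}, as the paper points out a few lines earlier.
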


Combined with Corollary~\ref{c:genericity} and~\cite[Theorem~E with $p = 2$]{LindahlRiveraLetelier2013}, the previous corollary implies the following result as a direct consequence.

\begin{corr}
  \label{c:generic isolation}
  Let~$p$ be a prime number and fix~$m$ in~$\{2, \ldots, p \}$.
  Then, over a field of characteristic~$p$, a generic fixed point of multiplicity~$m$ is isolated as a periodic point.
\end{corr}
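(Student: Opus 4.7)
The plan is to assemble the three ingredients cited just before the statement. Set $q \= m - 1$, so that $q$ lies in $\{1, \ldots, p - 1\}$ by the hypothesis on~$m$, and split the argument according to the parity of~$p$.

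In the case~$p$ odd, I would first invoke Corollary~\ref{c:genericity}: among power series having the origin as a fixed point of multiplicity~$q + 1 = m$, those that are $q$-ramified are generic, since by Theorem~\ref{thm:q-ramification} non-$q$-ramification is cut out by the vanishing of the explicit polynomial~\eqref{eq:genericity polynomial} in the coefficients~$a_q, \ldots, a_{2q}$. Then I would apply the corollary immediately preceding Corollary~\ref{c:generic isolation}, which asserts that every convergent $q$-ramified power series has the origin isolated as a periodic point. Combining these two statements gives the desired conclusion for odd~$p$.

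In the remaining case $p = 2$, the constraint $m \in \{2, \ldots, p\}$ forces $m = 2$ and $q = 1$, and Corollary~\ref{c:genericity} does not formally apply. Instead I would appeal to the $p = 2$ part of \cite[Theorem~E]{LindahlRiveraLetelier2013}, recalled in the paragraph after Theorem~\ref{thm:q-ramification}: a power series~$f$ with $\mult(f) = 2$ is $1$-ramified if and only if $\resit(f) \neq 0$ \emph{and} $\resit(f) \neq 1$. Both conditions are polynomial inequalities in the first two non-trivial coefficients of~$f$ (the explicit polynomials being provided by Theorem~\ref{thm:closed-formula}), so they define a generic subset. Once generic $1$-ramification is in hand, the preceding corollary again yields isolation of the origin as a periodic point.

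There is no substantive obstacle: the proof is a pure assembly. The only point worth checking carefully is that the notion of genericity used by Corollary~\ref{c:genericity} (and by the $p = 2$ result) is compatible with the class of convergent power series over an ultrametric field appearing in the isolation corollary. This reduces to the observation that $q$-ramification is a Zariski-open condition on a fixed finite number of coefficients, hence remains generic in any topology on convergent power series refining coefficient convergence. Once this compatibility is noted, the conclusion of Corollary~\ref{c:generic isolation} follows immediately by combining genericity of $q$-ramification with the isolation property for $q$-ramified power series.
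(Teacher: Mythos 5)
Your proposal is correct and follows exactly the paper's own argument: combine Corollary~\ref{c:genericity} (odd~$p$), \cite[Theorem~E with $p=2$]{LindahlRiveraLetelier2013} (case $p=2$, $m=2$), and the immediately preceding corollary on isolation for convergent $q$-ramified power series. Your closing remark about compatibility of the genericity notion with the convergent setting is a reasonable sanity check, but the paper treats the assembly as immediate, as do you.
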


This corollary solves~\cite[Conjecture~1.2]{LindahlRiveraLetelier2013} in the affirmative, for generic multiple fixed points of a fixed and small multiplicity, as well as~\cite[Conjecture~4.3]{KallalKirkpatrick2019}.
In the case~$m = 2$, Corollary~\ref{c:generic isolation} is~\cite[Main Theorem]{LindahlRiveraLetelier2015}.

In the case~$q = 1$, Theorem~\ref{thm:lower-bound} was shown by Lindahl and the second named author~\cite[Theorem~B]{LindahlRiveraLetelier2015}.
This last result also applies to~$p = 2$.
In the case~${q = 2}$, and for power series with integer coefficients, Theorem~\ref{thm:lower-bound} was shown by Lindahl and the first named author~\cite[Theorem~A]{LindahlNordqvist2018}.

\subsection{Organization}
\label{sec:organziation}

In~\S\ref{sec:invariance} and in Appendix~\ref{app:resit}, we study the residue fixed point index over a field of arbitrary characteristic.
Theorem~\ref{thm:closed-formula} is shown in~\S\ref{sec:proof-of-closed-formula}, the invariance of the residue fixed point index under coordinate changes is shown in~\S\ref{invarresidue}, and in~\S\ref{sec:nf} we study normal forms.
All these results are used in the in the proof of Theorems~\ref{thm:q-ramification} and~\ref{thm:lower-bound}.
In Appendix~\ref{app:resit}, we study the behavior under iterations of the iterative residue.

In~\S\ref{shortproof} we give a short proof of Theorem~\ref{thm:q-ramification} that relies on a result of Laubie and Sa{\"{\i}}ne in~\cite{LaubieSaine1998}.
After some preliminaries on lower ramification numbers in~\S\ref{sec:lowramif}, this proof is given in~\S\ref{sec:proof of ramification}.

In~\S\ref{s:periodic points} we give a self-contained proof of Theorem~\ref{thm:q-ramification}, and the proof of Theorem~\ref{thm:lower-bound}.
We obtain both of these from our main technical result that we state as the ``Main Lemma'' at the beginning of~\S\ref{s:periodic points}.
The proof of this result occupies~\S\ref{s:proof of Main Lemma}.
In~\S\ref{sec:self-contained-proof}, we use the Main Lemma and the results in~\S\ref{sec:invariance} to obtain more information about the coefficients of the iterates of a wildly ramified power series as in Theorem~\ref{thm:q-ramification}.
This is stated as Proposition~\ref{prop:deltaiterates}, and it implies Theorem~\ref{thm:q-ramification} as a direct consequence.
It is also the main new ingredient in the proof of Theorem~\ref{thm:lower-bound}, which is given in~\S\ref{sec:lowerbound}.

In~\S\ref{sec:furth-results-exampl}, we gather several examples illustrating our results.

\subsection*{Acknowledgments}
We would like to thank the referees for their valuable comments and corrections that helped improve the exposition of the paper.

The first named author acknowledges support from \emph{Kungliga Vetenskapsakademien}, grant MG2018-0011, for his visit to the second named author at University of Rochester. He would also like to thank the second named author for his hospitality and for providing an excellent working environment during said visit. Finally, the first named author would also like to thank his supervisor Karl-Olof Lindahl for fruitful discussions in the early stages of this project.

The second named author acknowledges partial support from NSF grant DMS-1700291.

\section{The residue fixed point index}\label{sec:invariance}
In this section we prove the closed formula (Theorem~\ref{thm:closed-formula}) and the invariance under coordinate changes of the residue fixed point index.
The former is proved in~\S\ref{sec:proof-of-closed-formula}, and the latter is stated and proved in~\S\ref{invarresidue}.
In~\S\ref{sec:nf} we also use the residue fixed point index to study normal forms of wildly ramified power series.

Given a ring~$R$ and elements $a_1, \ldots, a_n$ of~$R$, denote by $\langle a_1,\ldots,a_n \rangle$ the ideal generated by~$a_1, \ldots, a_n$.
Furthermore, denote by~$R[[z]]$ the ring of power series with coefficients in~$R$ in the variable~$z$, and denote by~$\ord_z$ the $z$-adic valuation on~$R[[z]]$, \emph{i.e.}, for a nonzero~$f$ in~$R[[z]]$ the valuation~$\ord_z(f)$ is the unique integer~$j$ such that~$f$ is in~$z^jR[[z]] \setminus z^{j+1}R[[z]]$, and for~$f = 0$ we have $\ord_z(0) \= +\infty$.

\subsection{Closed formula for the residue fixed point index}
\label{sec:proof-of-closed-formula}
In this section we prove Theorem~\ref{thm:closed-formula}, after the following lemma.

\begin{lemma}\label{sumlittlea}
  Let $\K$ be a field, $q\geq 1$ an integer, and~$f$ a power series with coefficients in~$\K$ of the form~\eqref{psform}.
  Then~$- a_q^{q + 1} \ind(f)$ is equal to the coefficient of~$z^{q}$ in
  \begin{equation}
    \label{eq:sumlittlea}
    \sum_{r=0}^qa_q^r(-1)^{q-r}(a_{q+1}z+\cdots+a_{2q}z^{q})^{q-r}.
  \end{equation}
\end{lemma}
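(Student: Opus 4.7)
The plan is to unwind the definition of $\ind(f)$ directly, expand $1/(z-f(z))$ as a Laurent series using a geometric series, and then verify that only the terms through degree $q$ in the ``tail'' of~$f$ can contribute to the coefficient we want.

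First I would write
\[
z - f(z) \;=\; -\,z^{q+1}\bigl(a_q + a_{q+1}z + a_{q+2}z^2 + \cdots\bigr),
\]
so that the coefficient of $z^{-1}$ in $1/(z-f(z))$ equals the coefficient of $z^{q}$ in $-1/g(z)$, where $g(z) \= a_q + P(z)$ and $P(z) \= a_{q+1}z + a_{q+2}z^2 + \cdots$. Since $a_q \neq 0$, I would then expand
\[
\frac{1}{g(z)} \;=\; \frac{1}{a_q}\sum_{k=0}^{+\infty}\left(-\tfrac{P(z)}{a_q}\right)^k \;=\; \sum_{k=0}^{+\infty}\frac{(-1)^k}{a_q^{k+1}}P(z)^k.
\]

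The next step is the truncation observation: because $P(z)$ has order at least~$1$, the power $P(z)^k$ has order at least~$k$, so for $k > q$ the term $P(z)^k$ contributes nothing to the coefficient of $z^q$, reducing the sum over~$k$ to $k=0,1,\dots,q$. Moreover, writing $P(z) = P_q(z) + Q(z)$ with $P_q(z) \= a_{q+1}z + \cdots + a_{2q}z^q$ and $\ord_z Q \ge q+1$, any term of the expansion of $(P_q+Q)^k$ involving at least one factor of~$Q$ has order at least $(k-1) + (q+1) = k+q \ge q+1$ whenever $k \ge 1$. Hence for every $0 \le k \le q$ the coefficient of $z^q$ in $P(z)^k$ coincides with that in $P_q(z)^k$, so
\[
\ind(f) \;=\; -\,[z^q]\,\frac{1}{g(z)} \;=\; -\sum_{k=0}^{q}\frac{(-1)^k}{a_q^{k+1}}\,[z^q]\,P_q(z)^k.
\]

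Finally, I would multiply through by $-a_q^{q+1}$ and reindex via $r \= q-k$ to obtain
\[
-\,a_q^{q+1}\ind(f) \;=\; \sum_{r=0}^{q}(-1)^{q-r}a_q^{r}\,[z^q]\,P_q(z)^{q-r} \;=\; [z^q]\sum_{r=0}^{q}a_q^r(-1)^{q-r}(a_{q+1}z+\cdots+a_{2q}z^{q})^{q-r},
\]
which is exactly~\eqref{eq:sumlittlea}. There is no real obstacle; the only point that requires any care is the truncation argument justifying that we may replace~$P$ by~$P_q$ inside the sum, which rests on a short order-of-vanishing estimate on $(P_q+Q)^k$.
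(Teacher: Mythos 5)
Your proof is correct and follows essentially the same route as the paper: factor $z - f(z) = -z^{q+1}(a_q + P(z))$, expand $1/(a_q + P(z))$ as a geometric series, and truncate both in the exponent $k$ and in the degree of the tail of $P$. The only cosmetic difference is that you make the truncation of $P$ to $P_q$ an explicit sublemma with an order-of-vanishing estimate, where the paper dispatches it in one sentence; the reindexing $r = q - k$ is the same change of variables the paper makes implicitly.
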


\begin{proof}
From the definition, $\ind(f)$ is equal to the coefficient of~$\frac{1}{z}$ in the Laurent series expansion about~$0$ of
\begin{equation}
  \label{eq:laurentindf}
  \begin{split}
    \frac{1}{z-f(z)}
  & =
  -\frac{1}{a_qz^{q+1}+a_{q+1}z^{q+2}+\cdots+a_{2q}z^{2q+1}+\cdots}
  \\ & =
  -\frac{1}{a_qz^{q+1}}\cdot\frac{1}{1 + \frac{a_{q+1}}{a_q}z +\frac{a_{q+2}}{a_q}z^2+\cdots}
  \\ & =
  -\frac{1}{a_q^{q+1}z^{q+1}}\sum_{j=0}^{+ \infty}a_q^{q-j}(-1)^j\left(a_{q+1}z +a_{q+2}z^2+\cdots\right)^j.
  \end{split}
\end{equation}
Thus, $\ind(f)$ is equal to the coefficient of~$z^q$ in the sum in \eqref{eq:laurentindf}.
Note that for $k\geq 2q+1$, the coefficient $a_k$ does not contribute to the coefficient of~$z^q$ in the sum in \eqref{eq:laurentindf}.
Also for $j>q$, the corresponding term in the sum in~\eqref{eq:laurentindf} has no term in~$z^q$.
Hence, $\ind(f)$ is equal to the coefficient of~$z^q$ in~\eqref{eq:sumlittlea}, as claimed.
\end{proof}

\begin{proof}[Proof of Theorem~\ref{thm:closed-formula}]
  In view of Lemma~\ref{sumlittlea}, it is sufficient to compute the coefficient of~$z^q$ in~\eqref{eq:sumlittlea}.
  Using the multinomial theorem and regrouping, \eqref{eq:sumlittlea} is equal to
\begin{multline*}
    \sum_{r=0}^qa_q^{r}(-1)^{q-r}\sum_{\substack{(\MultiInd_1, \ldots, \MultiInd_q) \in \N^q \\ \MultiInd_{1}+\ldots+\MultiInd_{q} =q-r}}\binom{q-r}{\MultiInd_{1},\ldots, \MultiInd_{q}}\prod_{j=1}^{q} (a_{q+j} z^j)^{i_{j}}
    \\ =
    \sum_{\substack{\MultiInd \in \N^{q + 1} \\ |\MultiInd| = q}}(-1)^{q-\MultiInd_0}\binom{q-\MultiInd_0}{\MultiInd_{1},\ldots,\MultiInd_{q}} \left(\prod_{j = 0}^q a_{q + j}^{\iota_j}\right)z^{\|\iota\|}.
\end{multline*}
In the last expression, the term in~$z^{q}$ is given by restricting the sum to those multi-indices~$\MultiInd$ satisfying~$\| \MultiInd \| = q$.
This proves the theorem.
\end{proof}

\subsection{The residue fixed point index is invariant}\label{invarresidue}

This section is devoted to prove the following proposition.
\begin{prop}
  \label{conj}
  Let~$\K$ be a field.
  Then, among power series~$f$ with coefficients in~$\K$ and satisfying~$f(0)=0$ and~$f(z) \neq z$, the residue fixed point index is invariant under coordinate changes.
  That is, for every power series~$\varphi$ with coefficients in~$\K$ such that~$\varphi(0) = 0$ and~$\varphi'(0) \neq 0$, the power series~$\widehat{f} \= \varphi \circ f \circ \varphi^{-1}$ satisfies
  \begin{displaymath}
    \ind(\widehat{f}) = \ind(f).
  \end{displaymath}
\end{prop}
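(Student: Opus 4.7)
The plan is to reduce the proposition to the invariance of the formal residue under a power-series substitution, and then to verify that the resulting differential differs from $\frac{\dd z}{z - f(z)}$ only by a regular 1-form at the origin, whose residue is zero.

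First, I would invoke the classical fact that for any $g \in \K((w))$ and any $\varphi \in \K[[z]]$ with $\varphi(0) = 0$ and $\varphi'(0) \neq 0$, the coefficient of $w^{-1}$ in $g(w)$ equals the coefficient of $z^{-1}$ in $g(\varphi(z))\,\varphi'(z)$; this is provable by a direct combinatorial manipulation of coefficients, and does not depend on the characteristic. Applied to $g(w) \= 1/(w - \widehat{f}(w))$ and combined with the tautological identity $\widehat{f}(\varphi(z)) = \varphi(f(z))$, it gives that $\ind(\widehat{f})$ equals the coefficient of $z^{-1}$ in $\varphi'(z)/(\varphi(z) - \varphi(f(z)))$.

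Next, I would use the universal factorization $\varphi(z) - \varphi(y) = (z - y)\,V_0(z, y)$ in $\K[[z, y]]$, which satisfies $V_0(z, z) = \varphi'(z)$. Specializing $y = f(z)$ yields $\varphi(z) - \varphi(f(z)) = (z - f(z))\,V(z)$ with $V(z) \= V_0(z, f(z))$. Since $V_0(z, y) - \varphi'(z)$ vanishes along the diagonal $y = z$, it is divisible by $z - y$; hence $V(z) - \varphi'(z)$ is divisible by $z - f(z)$ in $\K[[z]]$. Letting $q + 1 \= \ord_z(z - f(z))$, this gives $V(z) \equiv \varphi'(z) \pmod{z^{q+1}}$, and since $V(0) = \varphi'(0) \neq 0$, one may write $\varphi'(z)/V(z) = 1 + z^{q+1}\,U(z)$ for some $U \in \K[[z]]$.

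Combining the two steps,
\[
  \frac{\varphi'(z)}{\varphi(z) - \varphi(f(z))}
  =
  \frac{1}{z - f(z)} + \frac{z^{q + 1}\,U(z)}{z - f(z)}.
\]
Because $z - f(z)$ has order exactly $q + 1$ at $0$, the second term on the right is a formal power series with no negative-degree terms, so its coefficient of $z^{-1}$ vanishes. Matching coefficients of $z^{-1}$ then yields $\ind(\widehat{f}) = \ind(f)$, as desired. The only nontrivial ingredient is the formal-residue invariance invoked at the start; the rest is routine power-series manipulation. The substantive observation is that $V(z) = V_0(z, f(z))$ agrees with $\varphi'(z)$ to order $z^{q+1}$, which is precisely what makes the pulled-back differential differ from $\dd z/(z - f(z))$ by a regular 1-form.
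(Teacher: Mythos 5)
Your second step --- the universal factorization $\varphi(z) - \varphi(y) = (z - y)\,V_0(z,y)$ with $V_0(z,z)=\varphi'(z)$, giving the exact identity $\varphi(z) - \varphi(f(z)) = (z-f(z))V(z)$ and hence $V\equiv\varphi'\pmod{z^{q+1}}$ --- is correct, and it is a cleaner way to obtain the key regularity claim than the paper's congruence modulo $\langle \widehat{\Delta}(z)^2\rangle$: you get an exact divisibility statement in place of a congruence with error terms to track, and from it the fact that $\varphi'(z)/\bigl(\varphi(z)-\varphi(f(z))\bigr)-1/(z-f(z))$ is a formal power series falls out immediately. That part is fine and, if anything, preferable.

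The gap is in the first step. The ``classical fact'' you invoke, that $[w^{-1}]\,g(w) = [z^{-1}]\,g(\varphi(z))\,\varphi'(z)$, is precisely the nontrivial ingredient in this proposition; restricted to $g(w)=w^n$ with $n\le -2$, it is exactly the paper's Lemma~\ref{powersarezero}. Your assertion that it ``is provable by a direct combinatorial manipulation of coefficients, and does not depend on the characteristic'' waves away where the difficulty actually lies. When $n\le -2$ and $p\nmid(n+1)$, the identity $\varphi^n\varphi'=\bigl(\varphi^{n+1}/(n+1)\bigr)'$ and the vanishing residue of a derivative settle it; but when $p\mid(n+1)$ that argument collapses and a real idea is required. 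The paper supplies a Frobenius reduction in Lemma~\ref{powersarezero}; a standard alternative is a universal-coefficient argument over $\Z[c_1,c_1^{-1},c_2,\dots]$, where the residue $[z^{-1}]\varphi^n\varphi'$ is a polynomial in the $c_i$ equal to $\delta_{n,-1}$ by the characteristic-zero computation, and one then specializes. You may legitimately cite the invariance of formal residue under substitution as a known theorem and give a reference, but you cannot dismiss it as a routine coefficient check --- in positive characteristic it is the only substantive step.

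Structurally, your proof and the paper's use the same two ingredients in opposite order: you change variables first (via residue invariance) and then strip off the regular correction $z^{q+1}U(z)/(z-f(z))$, whereas the paper first shows $\ind(\widehat{f}) = [z^{-1}]\,\varphi'/(\Delta\circ\varphi)$ by its congruence, then expands $1/\Delta$ into a finite negative-power part plus a power series and applies Lemma~\ref{powersarezero} term by term. Your ordering reads more cleanly once the residue invariance is actually established.
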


The proof of this proposition is given after the following lemma.

\begin{lemma}\label{powersarezero}
  Let~$\K$ be a field and~$\varphi$ a power series with coefficients in~$\K$ such that~$\varphi(0) = 0$ and~$\varphi'(0) \neq 0$.
  Then for every integer $N\geq 1$, the coefficient of~$\frac{1}{z}$ in the Laurent series expansion about~$0$ of
  \[\frac{\varphi'(z)}{\varphi(z)^{N+1}}\]
  is zero.
\end{lemma}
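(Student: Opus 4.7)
The plan is to combine the identity
\[
  \frac{\varphi'(z)}{\varphi(z)^{N+1}}
  =
  -\frac{1}{N} \, \frac{d}{dz}\!\left(\frac{1}{\varphi(z)^{N}}\right)
\]
with the elementary observation that the coefficient of $z^{-1}$ in the formal derivative of any Laurent series vanishes: differentiating $\sum c_n z^n$ gives $\sum n c_n z^{n-1}$, and the contribution to $z^{-1}$ comes only from $n=0$, which is $0 \cdot c_0 = 0$. Since $\varphi(z)$ begins with $a_1 z$ and $a_1 \neq 0$, the reciprocal $1/\varphi(z)^{N}$ is a genuine element of $\K((z))$, so the identity is a meaningful equality in $\K((z))$. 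Taking the coefficient of $z^{-1}$ on both sides settles the lemma whenever $N$ is invertible in $\K$, i.e., whenever $\operatorname{char}(\K) \nmid N$.

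The remaining case, $\operatorname{char}(\K) = p$ with $p \mid N$, is the only real obstacle, since the factor $1/N$ is then unavailable. I would dispose of it by a universal specialization argument. Introduce indeterminates $A_1, A_2, \ldots$, form the ring
\[
  R \= \Z[A_1, A_1^{-1}, A_2, A_3, \ldots],
\]
and consider the generic power series $\Phi(z) \= \sum_{i=1}^{+\infty} A_i z^i \in R[[z]]$. Expanding
\[
  \frac{\Phi'(z)}{\Phi(z)^{N+1}}
  =
  \frac{\Phi'(z)}{A_1^{N+1} z^{N+1}} \cdot \left( 1 + \tfrac{A_2}{A_1} z + \tfrac{A_3}{A_1} z^2 + \cdots \right)^{-(N+1)}
\]
shows that its coefficient of $z^{-1}$ is a polynomial in $A_1^{\pm 1}, A_2, \ldots, A_{N+1}$ with integer coefficients, hence a well-defined element of $R$. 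Embedding $R$ in its fraction field $\Q(A_1, A_2, \ldots)$, which has characteristic zero, the first paragraph shows this element vanishes there; since $R \hookrightarrow \Q(A_1, A_2, \ldots)$, it vanishes in $R$. Specializing via the ring homomorphism $R \to \K$ sending $A_i \mapsto a_i$ associated to a given $\varphi$ then yields the lemma in arbitrary characteristic.

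The only point requiring care — and the closest thing to an obstacle — is checking that the operation ``take the coefficient of $z^{-1}$'' commutes with the specialization $A_i \mapsto a_i$, which is immediate from the polynomial dependence on finitely many generators noted above. Thus the proof separates cleanly into two halves: a one-line derivative argument when $N$ is a unit in $\K$, and a lift-to-generic trick that handles the characteristic-$p$ obstruction uniformly.
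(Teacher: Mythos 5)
Your proof is correct, and it takes a genuinely different route from the paper's in the only case where there is real content, namely $\operatorname{char}(\K) = p$ with $p \mid N$. (The first half, the derivative identity and the vanishing of the $z^{-1}$ coefficient of a formal derivative when $N$ is a unit, is the same in both.) The paper stays inside characteristic $p$: writing $N = np^\ell$ with $p \nmid n$ and $\sigma \= \frob^\ell$, it exploits the factorization
\[
\frac{\varphi'(z)}{\varphi(z)^{N+1}}
= \frac{(\varphi^\sigma)'(z^{p^\ell})}{\varphi^\sigma(z^{p^\ell})^{n+1}} \cdot
\left( \frac{\varphi^\sigma(z^{p^\ell})}{(\varphi^\sigma)'(z^{p^\ell})} \cdot \frac{\varphi'(z)}{\varphi(z)} \right),
\]
noting that the first factor is a Laurent series in $z^{p^\ell}$ whose $z^{-p^\ell}$ coefficient already vanishes by the settled case applied to $n$, and that the second factor is a power series of $z$-order $p^\ell - 1$. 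You instead lift the whole problem to characteristic zero: the coefficient of $z^{-1}$ in $\Phi'(z)/\Phi(z)^{N+1}$ for the generic series $\Phi$ is a well-defined element of $\Z[A_1^{\pm 1}, A_2, \ldots, A_{N+1}]$; since this ring embeds in the characteristic-zero field $\Q(A_1, A_2, \ldots)$ where the derivative identity applies, the element is zero; specializing $A_i \mapsto a_i$ then kills it over any $\K$. The only point that needs checking in your route is the compatibility of inversion and coefficient-extraction with the specialization homomorphism $R((z)) \to \K((z))$, which you address and which is routine (inverses are unique, so $\Phi^{-(N+1)} \mapsto \varphi^{-(N+1)}$). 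The paper's Frobenius argument is shorter once one is comfortable with positive-characteristic power series manipulations; your universal-coefficient argument is more systematic and reduces the whole statement, uniformly in the characteristic, to the characteristic-zero case plus the integrality of the universal coefficient.
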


\begin{proof}
  Put~$\varphi(z) = {\displaystyle \sum_{j=0}^{+\infty} a_jz^j}$ and for a field automorphism~$\sigma$ of~$\K$ put
  \[\varphi^\sigma(z) \= \sum_{j=0}^{+\infty} \sigma(a_j)z^j.\]

  If the characteristic of~$\K$ is zero or if the characteristic of~$\K$ is positive and it does not divide~$N$, then the lemma is clear as
  \[\frac{\varphi'(z)}{\varphi(z)^{N+1}} = \left(-\frac{1}{N}\cdot \frac{1}{\varphi(z)^N}\right)'. \]
  So we assume~$\K$ is of characteristic $p>0$ and that~$N$ is divisible by~$p$.
  Let $\ell\geq 1$ be the largest integer such that $p^\ell \mid N$, and put $n\=p^{-\ell}N$.
  Moreover, denote by $\frob\colon \K\to\K$ the Frobenius automorphism, given by $\frob(z) \= z^p$, and put $\sigma\=\frob^\ell$.
  Then we have 
\begin{equation}\label{eq:frob}
\frac{\varphi'(z)}{\varphi(z)^{N +1}} 
=
\frac{(\varphi^\sigma)'(z^{p^\ell})}{\varphi^\sigma(z^{p^\ell})^{n+1}}
\cdot \left( \frac{\varphi^\sigma(z^{p^\ell})}{(\varphi^\sigma)'(z^{p^\ell})} \cdot \frac{\varphi'(z)}{\varphi(z)} \right).
\end{equation}
Since $n$ is not divisible by $p$, the coefficient of~$\frac{1}{z}$ in the Laurent series expansion about~$0$ of $\frac{(\varphi^{\sigma})'(z)}{(\varphi^{\sigma}(z))^{n+1}}$ is zero.
So the coefficient of~$\frac{1}{z^{p^\ell}}$ in the Laurent series expansion about~$0$ of~$\frac{(\varphi^\sigma)'(z^{p^\ell})}{\varphi^\sigma(z^{p^\ell})^{n+1}}$ is zero.
Together with
\[\ord_z\left(\frac{\varphi^{\sigma}(z^{p^\ell})}{(\varphi^\sigma)'(z^{p^\ell})}\cdot \frac{\varphi'(z)}{\varphi(z)}\right) = p^\ell-1,\]
this implies that the coefficient of~$\frac{1}{z}$ in the Laurent series expansion about~$0$ of~$\frac{\varphi'(z)}{\varphi(z)^{N +1}}$ is zero, which is the desired assertion.
\end{proof}

\begin{proof}[Proof of Proposition~\ref{conj}]
  If $f'(0) \neq 1$, then~$\ind(f)$ is equal to $\frac{1}{1-f'(0)}$, which is easily seen to be invariant under coordinate changes.
  Assume $f'(0) = 1$, and put
  \begin{displaymath}
    \Delta(z) \= f(z)-z
    \text{ and }
    q\=\ord_z(\Delta(z))-1.
  \end{displaymath}
  Our hypothesis $f(z)\neq z$ implies that $q$ is finite and our assumption $f'(0)=1$ implies that $q\geq 1$.

Let~$\varphi$ be a power series with coefficients in~$\K$ such that $\varphi(0)=0$ and $\varphi'(0) \neq 0$, and put
\begin{displaymath}
  \widehat{f}\=\varphi^{-1}\circ f \circ \varphi
  \text{ and }
  \widehat{\Delta}(z) \= \widehat{f}(z)-z.
\end{displaymath}
Clearly $\widehat{f}'(0) = 1$, so $\ord_z(\widehat{\Delta}(z))\geq 2$.
Moreover,
\begin{equation}
  \label{DeltaH}
  \begin{split}
    \Delta \circ \varphi(z)
    & =
    \varphi(\widehat{f}(z)) - \varphi(z)
    \\ & =
    \varphi(z + \widehat{\Delta}(z)) - \varphi(z)
    \\ & \equiv
    \varphi'(z)\widehat{\Delta}(z) \mod \langle \widehat{\Delta}(z)^2\rangle.    
  \end{split}
\end{equation}
Since $\ord_z(\Delta) = q+1$ and $\ord_z(\varphi') = 0$, we conclude that
\begin{displaymath}
  \ord_z(\Delta\circ \varphi) = q+1
  \text{ and }
  {\ord_z(\varphi'\cdot \widehat{\Delta})} = \ord_z(\widehat{\Delta}).
\end{displaymath}
On the other hand, by (\ref{DeltaH}) we have $\ord_z(\Delta\circ\varphi-\varphi'\cdot\widehat{\Delta})\geq 2\ord_z(\widehat{\Delta})$ and therefore
\begin{displaymath}
  \ord_z(\widehat{\Delta}) = \ord_z(\Delta \circ \varphi) = q+1.
\end{displaymath}
Using (\ref{DeltaH}) again we obtain
\[\Delta \circ \varphi \equiv \varphi'\cdot \widehat{\Delta} + \langle z^{2q+2} \rangle,\]
and conclude that~$\ind(\widehat{f})$ is equal to the coefficient of~$\frac{1}{z}$ in the Laurent series expansion about~$0$ of
\[\frac{\varphi'}{\Delta\circ \varphi}. \]

Putting
\[\left(\frac{1}{\Delta}\right)(z) \= \sum_{i = -(q+1)}^{+\infty} a_i z^i,\]
we have
\begin{displaymath}
  \left(\frac{\varphi'}{\Delta\circ \varphi}\right)(z)
=
\sum_{N = 0}^{q} a_{- (N + 1)} \frac{\varphi'(z)}{\varphi(z)^{N + 1}}
+
\sum_{i = 0}^{+\infty} a_i \varphi(z)^i \varphi'(z).
\end{displaymath}
By Lemma~\ref{powersarezero}, the coefficient of~$\frac{1}{z}$ in the Laurent series expansion about~$0$ of the right-hand side is equal to that of~$a_{-1} \frac{\varphi'(z)}{\varphi(z)}$, which is clearly equal to $a_{-1}$.
This completes the proof of the proposition.
\end{proof}

\subsection{Normal forms in positive characteristic}\label{sec:nf}
Let~$\K$ be a field and~$f$ a power series with coefficients in~$\K$ such that~$q \= \mult(f) - 1$ is finite and satisfies~$q \ge 1$.
In the case of $\K=\C$, or more generally if~$\K$ is of characteristic zero, there exists a (formal) power series conjugating~$f$ to the polynomial
\begin{equation}
  \label{eq:22}
  z(1+z^q + \ind(f)z^{2q}).  
\end{equation}
When~$\K$ is of characteristic zero, this polynomial is called the \emph{normal form of~$f$}.

This statement is false if~$\K$ is of positive characteristic.
Our goal in this section is to prove the following proposition giving a sufficient condition for~$f$ to have the same normal form up to a high order.

\begin{prop}\label{nf}
  Let~$p$ be a prime number and~$\K$ a field of characteristic~$p$.
  Moreover, let $q$ be in~$\{1, \ldots, p - 1 \}$, and let~$f$ be a power series with coefficients in~$\K$ satisfying~$\mult(f) = q + 1$.
  Then, $f$ is conjugated to a power series with coefficients in a finite extension of~$\K$, of the form
  \begin{equation}
    \label{eq:nff}
    z(1 + z^q + \ind(f)z^{2q})\mod \langle z^{2q + p + 1} \rangle.
  \end{equation}
\end{prop}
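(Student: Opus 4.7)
The plan is to reach the stated normal form by a short sequence of conjugations. First, I would scale by $\varphi(z) = \lambda z$ with $\lambda^{q}$ the reciprocal of the coefficient of $z^{q+1}$ in~$f$; this is where one passes to a finite extension of~$\K$, in order to extract the required $q$-th root. After this preliminary step one may assume
\begin{equation*}
f(z) = z + z^{q+1} + f_{q+2} z^{q+2} + f_{q+3} z^{q+3} + \cdots.
\end{equation*}

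The main computation is to determine the effect of conjugating such an~$f$ by $\varphi_{k}(z) = z + c z^{k}$ with $k \geq 2$. Comparing both sides of the identity $\widehat{f}(z) + c \widehat{f}(z)^{k} = f(z + c z^{k})$ degree by degree, I would verify that $\widehat{f}_{m} = f_{m}$ for every $m < q + k$, and that
\begin{equation*}
\widehat{f}_{q+k} \;=\; f_{q+k} + (q + 1 - k)\, c.
\end{equation*}
Thus $\varphi_{k}$ can be chosen to kill the coefficient of $z^{q+k}$ provided $q + 1 - k$ does not vanish in~$\K$, that is, provided $k \not\equiv q + 1 \pmod{p}$.

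Next I would apply these conjugations iteratively with $k = 2, 3, \ldots, q, q+2, q+3, \ldots, q+p$, in this order, at each step choosing $c$ to annihilate the current coefficient of $z^{q+k}$. The hypothesis $q \le p - 1$ is exactly what ensures that none of these values of $k$ is congruent to $q + 1$ modulo $p$, so the required $c$ always exists. Since each $\varphi_{k}$ preserves all coefficients of degree strictly less than $q + k$, the previously killed coefficients remain zero. After all of these conjugations the series has the form
\begin{equation*}
\widetilde{f}(z) \equiv z + z^{q+1} + A\, z^{2q+1} \mod \langle z^{2q + p + 1} \rangle
\end{equation*}
for some~$A$ in a finite extension of~$\K$, with every coefficient between $z^{q+2}$ and $z^{2q+p}$ other than that of $z^{2q+1}$ being zero.

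Finally, I would identify $A$ with $\ind(f)$ by applying Theorem~\ref{thm:closed-formula} to $\widetilde{f}$: among the multi-indices $\iota \in \N^{q+1}$ with $|\iota| = q$ and $\|\iota\| = q$, only $\iota = (q-1, 0, \ldots, 0, 1)$ yields a nonzero contribution, giving $\ind(\widetilde{f}) = A$; Proposition~\ref{conj} then gives $\ind(\widetilde{f}) = \ind(f)$, so $A = \ind(f)$. The main obstacle is recognising that the single unavoidable resonance at $k = q + 1$ lines up exactly with the coefficient of $z^{2q+1}$ that is pinned down by the invariant $\ind(f)$, and that the hypothesis $q \le p - 1$ is precisely what guarantees that this is the only obstruction in the range of degrees we must normalise.
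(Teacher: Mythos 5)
Your proof is correct and follows essentially the same route as the paper's: a preliminary scaling to a finite extension to normalise the leading coefficient, then successive conjugations by maps of the form $z + cz^k$ (the paper writes $z(1+cz^{k'})$, so its $k'$ is your $k-1$) to kill every coefficient in the stated range except the resonant one at $z^{2q+1}$, and finally identifying that surviving coefficient with $\ind(f)$ via Theorem~\ref{thm:closed-formula} and Proposition~\ref{conj}. The only cosmetic difference is that the paper pins down the $z^{2q+1}$ coefficient as $\ind(f)$ after the first block of conjugations ($k = 2, \ldots, q$) and before the second ($k = q+2, \ldots, q+p$), whereas you do it at the very end; this is harmless because the later conjugations do not disturb the coefficient of $z^{2q+1}$.
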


The proof of this proposition is given after the following lemma.

\begin{lemma}\label{removeterms}
  Let~$\K$ be a field, $q \ge 1$ an integer, and~$f$ a power series with coefficients in~$\K$ of the form
  \begin{displaymath}
    f(z)
    =
    z\left(1 + \sum_{j=q}^{+\infty} a_jz^j\right), \text{ with } a_q \neq 0.
  \end{displaymath}
  Then, for every integer~$k \ge 1$ such that~$a_{q + k} \neq 0$ and~$k\neq q$ in $\K$, there is~$c$ in~$\K$ such that for the polynomial~$\varphi(z) \= z(1 + c z^k)$, we have
  \[ \varphi \circ f \circ \varphi^{-1}(z) \equiv z(1 + a_qz^q + \cdots + a_{q+k-1}z^{q+k-1}) \mod \langle z^{q+k+2}\rangle.\]
\end{lemma}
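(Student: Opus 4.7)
The plan is to conjugate $f$ by a monomial perturbation $\varphi(z) = z(1 + cz^k)$, compute $\widehat f \= \varphi \circ f \circ \varphi^{-1}$ explicitly up to order $z^{q+k+1}$, and solve for $c$ to eliminate the coefficient of $z^{q+k+1}$. The key observation is that $\varphi - \operatorname{id}$ has order $z^{k+1}$, so conjugation by $\varphi$ cannot alter the coefficients of $f(z) - z$ in degrees strictly less than $q + k + 1$; only the coefficient of $z^{q+k+1}$ is at stake.

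Concretely, setting $y \= \varphi^{-1}(z) = z - cz^{k+1} + O(z^{2k+1})$ and using $z = y + cy^{k+1}$, I would start from the identity
\[
\widehat f(z) - z = \bigl(f(y) - y\bigr) + c\bigl(f(y)^{k+1} - y^{k+1}\bigr).
\]
A Taylor expansion $f(y) - f(z) = f'(z)(y - z) + O\bigl((y-z)^2\bigr)$ combined with $f'(z) \equiv 1 + (q+1)a_q z^q \pmod{\langle z^{q+1}\rangle}$ yields
\[
f(y) - y \equiv f(z) - z - (q+1)\, a_q\, c\, z^{q+k+1} \pmod{\langle z^{q+k+2}\rangle},
\]
while expanding $f(y)^{k+1} = y^{k+1}\bigl(1 + a_q y^q + \cdots\bigr)^{k+1}$ gives
\[
c\bigl(f(y)^{k+1} - y^{k+1}\bigr) \equiv (k+1)\, a_q\, c\, z^{q+k+1} \pmod{\langle z^{q+k+2}\rangle}.
\]
Summing the two contributions, the coefficient of $z^{q+k+1}$ in $\widehat f(z) - z$ equals $a_{q+k} + (k - q)\, a_q\, c$, while all lower-degree coefficients agree with those of $f(z) - z$.

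Since $a_q \neq 0$ and $k \neq q$ in $\K$, the element $(k-q)\, a_q$ is a unit of $\K$, so the choice $c \= -a_{q+k}\bigl/\bigl((k-q)\, a_q\bigr)\in \K$ kills the offending coefficient and produces the asserted congruence. The only mildly delicate bookkeeping is confirming that the discarded higher-order contributions---the $O\bigl((y-z)^2\bigr)$ terms in the Taylor expansion of $f(y)$, the tail $\binom{k+1}{2} y^{k-1}(a_q y^q)^2 + \cdots$ in $f(y)^{k+1}$, and the $O(z^{2k+1})$ tail of $\varphi^{-1}(z)$---all lie in $\langle z^{q+k+2}\rangle$, which follows immediately from $k, q \geq 1$. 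No essential obstacle arises; the argument is a careful first-order perturbative computation whose success hinges entirely on the invertibility of $(k - q)\, a_q$ in $\K$.
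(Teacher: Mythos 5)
Your argument is correct and lands on the same choice $c = -a_{q+k}\bigl/\bigl((k-q)a_q\bigr)$, but the route differs from the paper's. The paper never inverts $\varphi$: it expands both sides of the conjugacy equation $\varphi \circ f = \widehat f \circ \varphi$ modulo $\langle z^{q+k+2}\rangle$, treating the coefficients of $\widehat f$ as unknowns, and matches coefficients to read off $\widehat a_j = a_j$ for $j < q+k$ and $\widehat a_{q+k} = (k-q) c a_q + a_{q+k}$. You instead compute $\widehat f = \varphi \circ f \circ \varphi^{-1}$ directly, via the identity $\widehat f(z) - z = \bigl(f(y) - y\bigr) + c\bigl(f(y)^{k+1} - y^{k+1}\bigr)$ with $y = \varphi^{-1}(z)$, followed by a first-order expansion in $y - z$. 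This is legitimate: $f(z + h) = f(z) + f'(z)h + (\text{terms of $h$-degree at least }2)$ is a formal identity over any ring, so the ``Taylor expansion'' is harmless in positive characteristic as long as the remainder is read as the Hasse-derivative terms $D^{(j)}f(z)(y-z)^j$ for $j \ge 2$, whose $z$-adic order is at least $(q+1-j) + j(k+1) = q + 1 + jk \ge q + 1 + 2k \ge q + k + 2$ once $k \ge 1$. The paper's route is a touch more elementary (no inverse, no expansion about a moving point), while yours exposes the linear structure of the perturbation more directly; either is fine. One small slip in your final bookkeeping sentence: the second-order term of $f(y)^{k+1} = y^{k+1}\bigl(1 + a_q y^q + \cdots\bigr)^{k+1}$ is $\binom{k+1}{2}\, y^{k+1}(a_q y^q)^2$, not $\binom{k+1}{2}\, y^{k-1}(a_q y^q)^2$; as written, the exponent $k-1$ gives $z$-order $2q+k-1$, which fails to reach $q+k+2$ when $q \le 2$, whereas the correct exponent gives order $2q+k+1 \ge q+k+2$ for every $q \ge 1$.
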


\begin{proof}
  Let~$c$ be a constant in~$\K$ to be chosen later, and put
  \[ \varphi(z) \= z(1 + cz^k)
    \text{ and }
    \widehat{f}(z)
    \=
    \varphi \circ f \circ \varphi^{-1}(z)
    = z \left( 1 + \sum_{j = q}^{+ \infty} \widehat{a}_jz^j \right).\]
  Then we find
  \begin{displaymath}
    \begin{split}
      \varphi \circ f(z)
      & \equiv
      z (1 + a_qz^q + \cdots + a_{q+k}z^{q+k})(1 + cz^k(1 + a_qz^q)^k)
      \mod \langle z^{q+k+2} \rangle
      \\ & \equiv
      z(1 + cz^k + a_qz^q + \cdots + a_{q+k-1}z^{q+k-1}
      \\&\qquad
      + ((k+1)ca_q + a_{q+k})z^{q+k}) \mod \langle z^{q+k+2} \rangle,      
    \end{split}
  \end{displaymath}
  and
\begin{displaymath}
  \begin{split}
    \widehat{f} \circ \varphi(z)
    & \equiv
    z (1 + cz^k)(1 + \widehat{a}_qz^q (1 + cz^k)^q + \widehat{a}_{q + 1} z^{q + 1} + \cdots + \widehat{a}_{q+k}z^{q+k})
    \\ &
    \quad \mod \langle z^{q+k+2} \rangle
    \\ & \equiv
  z(1 + cz^k + \widehat{a}_qz^q +\cdots + \widehat{a}_{q+k-1}z^{q+k-1}
  \\&\qquad
  + ((q+1)c\widehat{a}_q + \widehat{a}_{q + k})z^{q+k}) \mod \langle z^{q+k+2} \rangle.
  \end{split}
\end{displaymath}
Equating both expression yields
\[a_q = \widehat{a}_q,\ldots, a_{q+k-1} = \widehat{a}_{q+k-1},\]
and
\[ \widehat{a}_{q + k} = (k - q)c a_q + a_{q+k}.\]
By our assumption $k\neq q$ in~$\K$, we can take~$c = - \frac{a_{q+k}}{a_q(k-q)}$ to obtain~$\widehat{a}_{q + k} = 0$.
\end{proof}

\begin{proof}[Proof of Proposition~\ref{nf}]
  Denote by~$a \neq 0$ the coefficient of~$z^{q+1}$ in~$f$, and let~$\gamma$ in a finite extension of~$\K$ be such that~$\gamma^q = a^{-1}$.
  Note that the power series~$\widehat{f}(z) \= \gamma^{-1} f(\gamma z)$ satisfies~$\mult(\widehat{f}) = q + 1$ and that the coefficient of~$z^{q + 1}$ in~$\widehat{f}$ is equal to~$1$.

  Since by assumption~$q$ is in~$\{1,\ldots, p - 1\}$, we can apply Lemma~\ref{removeterms} successively with~$k = 1, \ldots, q - 1$, to obtain that there is a polynomial~$\varphi$ with coefficients in~$\K[\gamma]$, such that~$\varphi(0) = 0$, $\varphi'(0) = 1$, and
  \begin{displaymath}
    g(z)
    \=
    \varphi \circ \widehat{f} \circ \varphi^{-1}(z)
    \equiv
    z (1 + z^q) \mod \langle z^{2q + 1} \rangle.
  \end{displaymath}
  Note that by Theorem~\ref{thm:closed-formula} the coefficient of~$z^{2q + 1}$ in~$g$ is equal to~$\ind(g)$ and by Proposition~\ref{conj} we have~$\ind(g) = \ind(\widehat{f}) = \ind(f)$.
  Thus,
  \begin{displaymath}
    g(z)
    \equiv
    z (1 + z^q + \ind(f) z^{2q}) \mod \langle z^{2q + 2} \rangle.
  \end{displaymath}
  Finally, we apply Lemma~\ref{removeterms} successively with~$k = q + 1, \ldots, q + p - 1$, to obtain that there is a polynomial~$\phi$ with coefficients in~$\K[\gamma]$, such that~$\phi(0) = 0$, $\phi'(0) = 1$, and
  \begin{displaymath}
    \phi \circ g \circ \phi^{-1}(z)
    \equiv
    z (1 + z^q + \ind(f) z^{2q}) \mod \langle z^{2q + p + 1} \rangle.
    \qedhere
  \end{displaymath}
\end{proof}

\section{$q$-Ramified power series}\label{shortproof}
After some preliminaries on lower ramification numbers in~\S\ref{sec:lowramif}, in~\S\ref{sec:proof of ramification} we give a short proof of Theorem~\ref{thm:q-ramification} that relies on a result of Laubie and Sa{\"{\i}}ne in~\cite{LaubieSaine1998}.
See~\S\ref{sec:self-contained-proof} for a self-contained proof of Theorem~\ref{thm:q-ramification}.

\subsection{Lower ramification numbers}\label{sec:lowramif}
In this section we fix a prime number~$p$ and a field~$\K$ of characteristic~$p$.
Recall that for a power series~$f$ in~$\K[[\zeta]]$ and an integer~$n \ge 1$, the lower ramification number~$i_n(f)$ of~$f$ is
\[i_n(f) = \mult(f^{p^n})-1.\]
Lower ramification numbers have been studied by several authors, \emph{e.g.}, \cite{Sen1969,Keating1992,LaubieSaine1998,LaubieMovahhediSalinier2002}.
    A central theorem of Sen~\cite[Theorem~1]{Sen1969} states that if for some $n\geq0$ we have $i_n(f) < +\infty$, then
\[i_n(f) \equiv i_{n-1}(f) \pmod{p^n}.\]
The following consequence of Sen's theorem shows that for~$q$ in~$\{1, \ldots, p - 1 \}$, a $q$-ramified power series can be thought of as minimal in the sense that for every integer~$n$ the lower ramification number~$i_n(f)$ is least possible.

\begin{prop}\label{prop:qramifisminramif}
  Let~$p$ be a prime number and~$\K$ a field of characteristic $p$.
  Then for every~$q$ in $\{1,\ldots,p-1\}$, and every power series~$f$ in~$\K[[\zeta]]$ satisfying $\mult(f) = q+1$, we have for every integer~$n \ge 1$
  \begin{equation}
    \label{ineq}
    i_n(f) \geq q(1 + p + \cdots + p^n).
  \end{equation}
\end{prop}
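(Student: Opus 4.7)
The plan is to argue by induction on~$n$. The base case~$n = 0$ is immediate from the definition: $i_0(f) = \mult(f) - 1 = q$. For the inductive step, I assume $i_{n-1}(f) \ge q(1+p+\cdots+p^{n-1})$ and wish to show $i_n(f) \ge q(1+p+\cdots+p^n)$; the case $i_n(f) = +\infty$ is vacuous, so I assume $i_n(f)$ is finite, which forces $i_k(f) < +\infty$ for all $k \le n$ as well (since $f^{p^k} = \mathrm{id}$ would imply $f^{p^n} = \mathrm{id}$).

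A first step is strict monotonicity $i_n(f) > i_{n-1}(f)$. Setting $g \= f^{p^{n-1}}$ and $m \= i_{n-1}(f)$, an elementary induction shows that if $g(\zeta) \equiv \zeta + c\zeta^{m+1} \mod \langle \zeta^{m+2}\rangle$ with $c \ne 0$, then $g^k(\zeta) \equiv \zeta + kc\zeta^{m+1} \mod \langle \zeta^{m+2}\rangle$ for every $k \ge 1$; taking $k = p$ the leading coefficient $pc$ vanishes in~$\K$, so $\mult(g^p) \ge m+2$. Combined with Sen's congruence $i_n(f) \equiv i_{n-1}(f) \pmod{p^n}$, this already gives $i_n(f) \ge i_{n-1}(f) + p^n$, which together with the inductive hypothesis settles the minimally ramified case $q = 1$.

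For $q \in \{2, \ldots, p-1\}$ the main obstacle is to sharpen the jump from~$p^n$ to~$qp^n$, since Sen's congruence together with strict monotonicity alone is insufficient. I would establish the sharper inequality $i_n(f) - i_{n-1}(f) \ge qp^n$ by pushing the expansion of $g^p(\zeta) - \zeta$ further: iterating Sen's congruence yields $i_k(f) \equiv q \pmod p$ for every $k \le n$, so in particular $m \equiv q \pmod p$, and a careful analysis of the multinomial coefficients in the $p$-fold composition up to degree $m + qp^n + 1$ uses the hypothesis $q \le p-1$ to force every intermediate coefficient to vanish modulo~$p$. A shorter alternative is to invoke the precise jump estimates of Laubie and Sa{\"{\i}}ne~\cite{LaubieSaine1998}, which are valid exactly in the regime $q \le p - 1$ and supply the bound directly. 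Either way, combining the sharper jump with the inductive hypothesis yields
\[
  i_n(f) \;\ge\; i_{n-1}(f) + qp^n \;\ge\; q(1+p+\cdots+p^{n-1}) + qp^n \;=\; q(1+p+\cdots+p^n),
\]
closing the induction. The technical heart of the argument is the strengthened jump estimate $i_n(f) - i_{n-1}(f) \ge qp^n$; everything else is bookkeeping.
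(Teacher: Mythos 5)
Your overall framework is sound---induction with base $i_0(f) = q$, reduction to a jump estimate for $g := f^{p^{n-1}}$, and the (correct) observation that $i_n(f) - i_{n-1}(f) \ge qp^n$ closes the induction given the inductive hypothesis. The gap is in establishing the jump. Your strict-monotonicity argument yields only $i_1(g) \ge i_0(g) + 1$, but what is actually needed is the multiplicative bound $i_1(g) \ge p\,i_0(g)$, i.e., $\mult(g^p) \ge p\,\mult(g) - (p-1)$. Writing $m := i_0(g)$, in the critical case $m = q(1+\cdots+p^{n-1})$ the weaker $m+1$ bound together with Sen's congruence mod~$p^n$ gives only $i_1(g) \ge m + p^n$, which is short of $m + qp^n$ for $q \ge 2$. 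The ``careful analysis of multinomial coefficients'' is exactly where this missing multiplicative bound must come from, and it is not a naive multinomial identity. The paper obtains it via Lemma~\ref{lemma:delta}: set $\Delta_0(\zeta) := \zeta$ and $\Delta_k := \Delta_{k-1}\circ g - \Delta_{k-1}$; in characteristic~$p$, $\Delta_p = g^p - \mathrm{id}$, and each step raises the $\zeta$-adic order by at least $m$, so $\ord_\zeta(\Delta_p) \ge 1 + pm$ by telescoping. Without this ingredient or an equivalent, the jump is out of reach.

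Sen's congruence is then a genuinely separate second ingredient, not a by-product of coefficients vanishing for combinatorial reasons: applied to $g$ it gives $i_1(g) \equiv i_0(g) \equiv q \pmod{p}$, and since $pm \equiv 0 \pmod{p}$ and $0 < q < p$, this pushes $i_1(g) \ge pm + q$, whence the jump is $(p-1)m + q \ge qp^n$ by the inductive hypothesis. (The paper applies Sen to $f$ mod~$p^{n+1}$ and splits the strict and equality cases; your mod-$p$ variant on $g$ is a minor simplification once the multiplicative bound is in hand.) Finally, invoking unspecified ``jump estimates of Laubie and Sa{\"{\i}}ne'' is not a proof; the result the paper quotes from that reference, Proposition~\ref{laubiecorr}, is a rigidity statement that applies only once $i_1(f)$ is already known to be small, which is the wrong direction for the lower bound you need here.
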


The proof of this proposition is given after the following lemma.
To state this lemma, we introduce some notation.
Let~$R$ be a ring, and~$f$ a power series in~$R[[z]]$ of the form~$f(z) \equiv z \mod \langle z^2 \rangle$.
Following~\cite[\emph{Exemple}~3.19]{RiveraLetelier2003} and~\cite{LindahlRiveraLetelier2013}, define recursively for every integer $m\geq 0$ the power series~$\Delta_m$ by
\begin{equation}
  \label{eq:27}
  \Delta_0(z) \= z,  
\end{equation}
and for~$m \ge 1$ by
\begin{equation}
  \label{eq:23}
  \Delta_m(z)
  \=
  \Delta_{m-1}(f(z)) - \Delta_{m-1}(z).
\end{equation}
If~$R$ is of characteristic zero, then for every prime number~$p$ a direct computation shows that we have
\begin{equation}
  \label{eq:7}
  \Delta_p(z) \equiv f^p(z)-z \mod \langle p \rangle.
\end{equation}
In the case~$R$ is of characteristic~$p$, we have~$\Delta_p(z) = f^p(z)-z$.

\begin{lemma}\label{lemma:delta}
  Let~$p$ be a prime number and~$\K$ a field of characteristic $p$.
  Given a wildly ramified power series~$f$ in~$\K[[\zeta]]$, let~$( \Delta_m )_{m = 0}^{+\infty}$ be as above.
  Then for every integer~$m \ge 1$ we have
  \begin{equation}
    \label{eq:24}
    \ord_\zeta(\Delta_m)-\ord_\zeta(\Delta_{m-1})
    \geq
    \ord_\zeta(\Delta_1)-1.
  \end{equation}
\end{lemma}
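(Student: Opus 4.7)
The plan is to exploit the recursive definition of~$\Delta_m$ via a characteristic-free binomial expansion. Writing $f(\zeta) = \zeta + \Delta_1(\zeta)$, the recursion~\eqref{eq:23} reads
\[
\Delta_m(\zeta) = \Delta_{m-1}\bigl(\zeta + \Delta_1(\zeta)\bigr) - \Delta_{m-1}(\zeta),
\]
so it suffices to track $\zeta$-orders in the difference $g(\zeta + Y) - g(\zeta)$ applied with $g = \Delta_{m-1}$ and $Y = \Delta_1(\zeta)$.

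The only preliminary input I would record is a Hasse-derivative expansion: for any $g(\zeta) = \sum_{j \ge 0} b_j \zeta^j$ in $\K[[\zeta]]$, the binomial theorem gives
\[
g(\zeta + Y) - g(\zeta) = \sum_{k \ge 1} D_k g(\zeta) \cdot Y^k, \qquad D_k g(\zeta) \= \sum_{j \ge k} b_j \binom{j}{k} \zeta^{j-k},
\]
and from this explicit formula it is immediate that $\ord_\zeta(D_k g) \ge \ord_\zeta(g) - k$. This identity stands in for the Taylor expansion, which is unavailable in characteristic~$p$ because of the factorials $k!$.

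Substituting $g = \Delta_{m-1}$ and $Y = \Delta_1(\zeta)$ yields
\[
\Delta_m(\zeta) = \sum_{k \ge 1} D_k \Delta_{m-1}(\zeta) \cdot \Delta_1(\zeta)^k,
\]
and the $k$-th summand has $\zeta$-order at least
\[
\bigl(\ord_\zeta(\Delta_{m-1}) - k\bigr) + k\,\ord_\zeta(\Delta_1) = \ord_\zeta(\Delta_{m-1}) + k\bigl(\ord_\zeta(\Delta_1) - 1\bigr).
\]
Since $f$ is wildly ramified, $\ord_\zeta(\Delta_1) \ge 2$, so the coefficient $\ord_\zeta(\Delta_1) - 1$ is positive and the minimum over $k \ge 1$ is attained at $k = 1$. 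This yields~\eqref{eq:24} at once.

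There is no genuine obstacle; the argument is elementary and the bound is already tight at $k=1$. The only point worth flagging is that the naive Taylor expansion fails in positive characteristic, so one must use the binomial (Hasse-derivative) expansion to obtain the required coefficient-wise order estimate.
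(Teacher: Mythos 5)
Your proof is correct and follows essentially the same route as the paper: both expand $\Delta_m(\zeta)=\Delta_{m-1}(f(\zeta))-\Delta_{m-1}(\zeta)$ via the binomial theorem and bound $\zeta$-orders term by term, using $\ord_\zeta(\Delta_1)\ge 2$ to conclude. The paper factors $f(\zeta)^i-\zeta^i=\zeta^i\bigl[(1+b_q\zeta^q+\cdots)^i-1\bigr]$ and reads off the order gain $q=\ord_\zeta(\Delta_1)-1$ directly, whereas you reorganize the same double sum around the Hasse derivatives $D_k$; this is a cosmetic reshuffling rather than a different argument, though your explicit flagging of why the naive Taylor expansion is unavailable in characteristic $p$ is a worthwhile remark.
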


\begin{proof}
  Put~$q \= {\ord_\zeta(\Delta_1)-1}$, $f(\zeta) = \zeta\left(1 + \sum_{i = q}^{+ \infty} b_i\zeta^i\right)$, $r \= \ord_{\zeta}(\Delta_m)$, and~$\Delta_m(\zeta)=\sum_{i = r}^{+ \infty} a_i\zeta^i$.
  Then
  \[\Delta_{m+1}(\zeta)
    =
    \sum_{i =r}^{+ \infty} a_i\zeta^i\left[(1+b_q\zeta^q+\cdots)^i - 1\right], \]
  and therefore~$\ord_\zeta(\Delta_{m+1}) \ge r+q$.
\end{proof}

\begin{proof}[Proof of Proposition \ref{prop:qramifisminramif}]
  We prove~\eqref{ineq} by induction in~$n$.
  To prove~\eqref{ineq} for~$n = 1$, let~$(\Delta_m)_{m = 0}^{+ \infty}$ be as in~\eqref{eq:27} and~\eqref{eq:23}.
  Then for every integer~$m \ge 1$ we have $\ord_\zeta(\Delta_m) - \ord_\zeta(\Delta_{m-1}) \geq q$ by Lemma~\ref{lemma:delta}.
  An induction argument combined with~\eqref{eq:7} gives
  \begin{displaymath}
    i_1(f) = \ord_\zeta(\Delta_p) - 1 \geq qp = p i_0(f).
  \end{displaymath}
  But by Sen's theorem we have $i_1(f) \equiv i_0(f) \pmod{p}$, so
  \begin{equation}
    \label{firstinduc}
    i_1(f) \geq qp + q.
  \end{equation}
  This proves~\eqref{ineq} for~$n = 1$.
  
  Let~$n \ge 1$ be an integer for which~\eqref{ineq} holds, and put $g(\zeta) \= f^{p^n}(\zeta)$.
  Let~$(\widehat{\Delta}_m)_{m = 0}^{+ \infty}$ be the sequence~$(\Delta_m)_{m = 0}^{+ \infty}$ given by~\eqref{eq:27} and~\eqref{eq:23} with~$f$ replaced by~$g$.
  Then by Lemma~\ref{lemma:delta} for every integer~$m \ge 1$ we have
\[\ord_\zeta(\widehat{\Delta}_m)-\ord_\zeta(\widehat{\Delta}_{m-1})
  \geq
  \ord_{\zeta}(\widehat{\Delta}_1)
  =
  i_0(g). \]
An induction argument together with~\eqref{eq:7}, implies
\begin{equation}
  \label{eq:26}
  i_{n + 1}(f)
  =
  i_1(g)
  =
  \ord_\zeta(\widehat{\Delta}_p) - 1
  \geq
  p i_0(g)
  =
  p i_n(f).
\end{equation}
If the inequality in our induction assumption~\eqref{ineq} is strict, then we have
\begin{displaymath}
  i_{n + 1}(f)
  \ge
  p + p q (1 + p + \cdots + p^n)
  >
  q(1 + p + \cdots + p^{n + 1}).
\end{displaymath}
If equality holds in~\eqref{ineq}, then by Sen's theorem we have
\begin{displaymath}
  i_{n + 1}(f) \equiv q(1 + p + \cdots + p^n) \pmod{p^{n+1}}.
\end{displaymath}
Combined with~\eqref{eq:26}, this implies
\begin{displaymath}
  i_{n+1}(f)
  \geq
  q + p q (1 + p + \cdots + p^n)
  =
  q(1 + p + \cdots + p^{n + 1}).
\end{displaymath}
In all the cases we obtain~\eqref{ineq} with~$n$ replaced by~$n + 1$.
This completes the proof of the induction step, and of the the proposition.
\end{proof}

\subsection{Proof of Theorem~\ref{thm:q-ramification}}
\label{sec:proof of ramification}
In the proof of Theorem~\ref{thm:q-ramification} we use the following result of Laubie and Sa{\"{\i}}ne.
\begin{prop}[\cite{LaubieSaine1998}, Corollary~1]
  \label{laubiecorr}
Let~$p$ be a prime number, $\K$ a field of characteristic~$p$, and~$f$ in~$\K[[\zeta]]$ such that $f(0)=0$ and $f'(0)=1$.
If
\begin{displaymath}
  p\nmid i_0(f)
  \text{ and }
  i_1(f) < (p^2-p+1)i_0(f), 
\end{displaymath}
then for every integer $n\geq 1$ we have
\[ i_n(f) = i_0(f) + (1 + p + \cdots + p^n)(i_1(f)-i_0(f)). \]
\end{prop}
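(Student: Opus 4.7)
The plan is to prove the formula by induction on $n$, reducing it to a one-step recurrence for the lower ramification numbers which can then be established using the iteration operators $\Delta_m$ from Lemma~\ref{lemma:delta}. Concretely, the stated closed form is equivalent to the linear recurrence
\begin{displaymath}
  i_n(f) - p \cdot i_{n-1}(f) = i_1(f) - p \cdot i_0(f),
\end{displaymath}
whose unique solution with initial data $i_0(f)$ and $i_1(f)$ is the stated expression, so it suffices to prove this recurrence for all $n \ge 2$.

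For the induction, consider the auxiliary power series $g \= f^{p^{n-1}}$. By iteration of the definition of lower ramification numbers, $i_j(g) = i_{n-1+j}(f)$ for every $j \ge 0$. If one can verify that the hypotheses of the proposition transfer from $f$ to $g$, then applying the $n=2$ case to $g$ yields $i_{n+1}(f) - p \cdot i_n(f) = i_n(f) - p \cdot i_{n-1}(f)$, which combined with the inductive hypothesis gives the desired recurrence. The transfer of the congruence hypothesis $p \nmid i_0(g) = i_{n-1}(f)$ is immediate from Sen's theorem recalled in~\S\ref{sec:lowramif}, which gives $i_{n-1}(f) \equiv i_0(f) \pmod{p}$. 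The transfer of the quantitative bound $i_1(g) < (p^2 - p + 1) i_0(g)$ follows from the explicit formula established in earlier induction steps, together with the observation that the ratio $i_n(f)/i_{n-1}(f)$ approaches $p$ as $n$ grows, which is strictly smaller than $p^2 - p + 1$.

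The heart of the proof is thus to establish the base identity $i_2(f) = (p+1) i_1(f) - p \cdot i_0(f)$ under the hypothesis of the proposition. My approach would first use Proposition~\ref{nf} to conjugate $f$ into a standard form whose low-order coefficients are explicit, then compute $f^p - \zeta = \Delta_p$ and $f^{p^2} - \zeta$ modulo sufficiently high-order terms using the recursive definition of $\Delta_m$. A sharpening of Lemma~\ref{lemma:delta} tracking leading coefficients, rather than only orders of vanishing, should yield the exact value of $i_2(f)$ within a ``stability range'' controlled precisely by the hypothesis $i_1(f) < (p^2 - p + 1) i_0(f)$; this quantitative bound is what prevents unexpected cancellations among the leading terms of the iterated operators $\Delta_m$ applied to $f^p$.

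The main obstacle I anticipate is turning the order-of-vanishing inequalities of Lemma~\ref{lemma:delta} into equalities with explicit control of leading coefficients, since the lemma as stated is purely a lower bound. Demonstrating that the leading terms combine nontrivially at each application of $\Delta$, so that $\ord_\zeta(\Delta_p(f^p)) - 1$ equals the predicted value $(p+1) i_1(f) - p \cdot i_0(f)$ rather than some strictly larger value, will require a delicate combinatorial analysis of binomial coefficients modulo $p$ and an appeal to Sen's congruence to rule out intermediate values. A secondary delicate point is propagating the hypothesis $i_1(g) < (p^2 - p + 1) i_0(g)$ along the induction: one must know the formula before applying it, so the induction has to be set up carefully to interleave the recurrence with the bound-transferring consequence.
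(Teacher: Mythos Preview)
The paper does not prove this proposition; it is quoted from \cite{LaubieSaine1998} (as Corollary~1 there) and used as a black box in the short proof of Theorem~\ref{thm:q-ramification} in~\S\ref{sec:proof of ramification}. There is no argument in the present paper to compare your proposal against.

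That said, your proposal has a genuine gap. You plan to invoke Proposition~\ref{nf} to put $f$ into a standard form before computing $\Delta_p$, but Proposition~\ref{nf} is stated and proved only for $q = i_0(f)$ in $\{1,\ldots,p-1\}$. The Laubie--Sa\"{\i}ne statement assumes merely $p \nmid i_0(f)$, so $i_0(f)$ may be $p+1$ or any larger integer prime to~$p$; for such~$q$, Lemma~\ref{removeterms} already fails to eliminate the coefficient of $z^{q+k+1}$ whenever $k \equiv q \pmod p$, and this obstruction occurs well before $k=q$. More seriously, the hypothesis $i_1(f) < (p^2-p+1)\,i_0(f)$ does not pin down the shape of $f$ or of $f^p$: it permits $i_1(f)$ to take a wide range of values, whereas the $\Delta_m$-computations carried out in this paper (Proposition~\ref{prop:deltaiteratesshort}, the Main Lemma) are tailored to one specific value of $i_1(f)$ with the first two nonzero coefficients of $f^p$ under explicit control. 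Your sketch does not explain how a coefficient-tracking argument would produce the \emph{exact} value of $i_2(f)$, rather than only the lower bound that Lemma~\ref{lemma:delta} yields, for an arbitrary $i_1(f)$ in the allowed range. The mechanism in \cite{LaubieSaine1998} is different in kind: it exploits integrality constraints coming from the upper ramification filtration (Hasse--Arf type input) on the closed procyclic subgroup generated by~$f$, not an explicit coefficient computation.
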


In view of this result, the proof of Theorem~\ref{thm:q-ramification} reduces to show that for~$q$ in~$\{1, \ldots, p - 1 \}$ and~$f$ in~$\K[[\zeta]]$ satisfying~$i_0(f) = q$, the conditions
\begin{displaymath}
  i_1(f) = q (p + 1)
  \text{ and }
  \resit(f) \neq 0
\end{displaymath}
are equivalent.
The following is the key ingredient, together with Proposition~\ref{nf} and the invariance of the residue fixed point index under coordinate changes shown in~\S\ref{sec:invariance}.

\begin{prop}\label{prop:deltaiteratesshort}
Let~$p$ be an odd prime number and consider the rings
\begin{displaymath}
  \Z_{(p)} \= \left\{\frac{m}{n} \in \Q : m, n \in \Z, p\nmid n \right\},
  \end{displaymath}
  \begin{displaymath}
      F_1 \= \Z_{(p)}[x_0, x_1],
  \text { and }
  F_\infty \= \Z_{(p)}[x_0, x_1,x_2,\ldots].
\end{displaymath}
Then for each integer~$q \ge 1$ not divisible by~$p$, the power series~$\widehat{f}$ in~$F_\infty[[\zeta]]$ defined by
\[ \widehat{f}(\zeta)
  \=
  \zeta\left(1 + x_0 \zeta^q + x_1\zeta^{2q} + \zeta^{2q}\sum_{i=1}^{+ \infty} x_{i+1}\zeta^i\right), \]
satisfies
\begin{displaymath}
  \widehat{f}^p(\zeta)
  \equiv
  \zeta \left(1 + x_0^{p - 1} \left( x_0^2 \frac{q + 1}{2} - x_1 \right) \zeta^{q(p + 1)}\right) \mod \langle p, \zeta^{q(p + 1) + 2} \rangle.
\end{displaymath}
\end{prop}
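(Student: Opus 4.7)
The plan is to compute $\widehat{f}^p(\zeta) - \zeta$ modulo $\langle p, \zeta^{q(p+1)+2}\rangle$ using the iterated difference sequence introduced in the proof of Lemma~\ref{lemma:delta}. Set $\Delta_0(\zeta) := \zeta$ and $\Delta_m(\zeta) := \Delta_{m-1}(\widehat{f}(\zeta)) - \Delta_{m-1}(\zeta)$ for $m \ge 1$. Since $F_\infty$ has characteristic zero, \eqref{eq:7} gives $\Delta_p(\zeta) \equiv \widehat{f}^p(\zeta) - \zeta \pmod{\langle p\rangle}$, so it suffices to identify $\Delta_p$ modulo $\langle p, \zeta^{q(p+1)+2}\rangle$.

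The central observation is a \emph{gap property}: I would prove by induction on $m$ that
\[ \Delta_m(\zeta) = \alpha_m \zeta^{mq+1} + \beta_m \zeta^{(m+1)q+1} + O(\zeta^{(m+1)q+2}), \]
with no terms at intermediate orders $\zeta^{mq+2},\ldots,\zeta^{(m+1)q}$. This inheritance of the gap of $\widehat{f}(\zeta) = \zeta + x_0\zeta^{q+1} + O(\zeta^{2q+1})$ through composition yields the recursions $\alpha_{m+1}=\alpha_m(mq+1)x_0$ and
\[ \beta_{m+1}=\alpha_m\!\left[(mq+1)x_1+\binom{mq+1}{2}x_0^2\right]+\beta_m\bigl((m+1)q+1\bigr)x_0, \]
with $\alpha_1=x_0$ and $\beta_1=x_1$. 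Crucially, the residual $O(\zeta^{(m+1)q+2})$ terms of $\Delta_m$ contribute to $\Delta_{m+1}$ only at orders $\ge (m+2)q+2$, so $\beta_{m+1}$ is independent of $x_2, x_3, \ldots$; hence $\beta_p$ is a polynomial in $x_0, x_1$ alone.

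Unrolling the recursion yields the closed form
\[ \beta_m = x_0^{m-1}x_1\sum_{k=1}^m\prod_{\substack{j=1\\j\neq k}}^m(jq+1) + \frac{q\, x_0^{m+1}}{2}\sum_{k=2}^m(k-1)\prod_{\substack{j=1\\j\neq k}}^m(jq+1). \]
To reduce $\beta_p$ modulo $p$, observe that since $\gcd(q,p)=1$ there is a unique $k_0\in\{1,\ldots,p-1\}$ with $k_0q+1\equiv 0\pmod p$, namely $k_0\equiv -q^{-1}\pmod p$. Every product $\prod_{j\neq k}^p(jq+1)$ therefore vanishes modulo $p$ unless $k=k_0$; for $k=k_0$, the map $j\mapsto jq+1\pmod p$ gives a bijection from $\{1,\ldots,p-1\}\setminus\{k_0\}$ onto $\{2,3,\ldots,p-1\}$, so by Wilson's theorem $\prod_{j\neq k_0}^{p-1}(jq+1)\equiv -1\pmod p$, and combined with $pq+1\equiv 1\pmod p$ we obtain $\prod_{j\neq k_0}^p(jq+1)\equiv -1\pmod p$. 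Writing $\beta_p = Ax_0^{p-1}x_1 + Bx_0^{p+1}$, this directly gives $A\equiv -1\pmod p$ and, using $qk_0\equiv -1\pmod p$, $B\equiv \tfrac{q(1-k_0)}{2}\equiv \tfrac{q+1}{2}\pmod p$. Since $\alpha_p\equiv 0\pmod p$ (it is divisible by $k_0q+1$), we conclude $\Delta_p(\zeta)\equiv x_0^{p-1}\bigl(\tfrac{q+1}{2}x_0^2-x_1\bigr)\zeta^{q(p+1)+1}\pmod{\langle p,\zeta^{q(p+1)+2}\rangle}$, as claimed.

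The main obstacle is the inductive verification of the gap property together with the careful bookkeeping of the recursion for $\beta_m$, in particular justifying that $\beta_m$ decouples from $x_2, x_3, \ldots$. This decoupling, combined with the Wilson-style evaluation of the products modulo $p$, is what forces the final formula to depend only on the datum $\tfrac{q+1}{2}x_0^2 - x_1$, which is $x_0^2$ times the iterative residue of $\widehat{f}$.
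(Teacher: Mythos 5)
Your proposal is correct and follows essentially the same route as the paper's proof: you use the identical $\Delta_m$ iteration, derive the same recursions for $\alpha_m$ and $\beta_m$, observe the same decoupling of $\beta_m$ from $x_2,x_3,\ldots$ via the gap property, unroll to the same closed form (merely split into its $x_1$ and $x_0^2$ parts rather than written with the combined factor $x_0^2\tfrac{q(r-1)}{2}+x_1$), and finally reduce modulo $p$ with exactly the same Wilson-theorem argument used in the paper's Lemma~\ref{wilson}. No substantive differences.
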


The proof of Theorem~\ref{thm:q-ramification} is given at the end of this section, after the proof of this proposition.
To prove this proposition we use the strategy introduced in~\cite[\emph{Exemple}~3.19]{RiveraLetelier2003} and~\cite{LindahlRiveraLetelier2013}, using~\eqref{eq:27} and~\eqref{eq:23}.
We also use the following elementary lemma.
\begin{lemma}\label{wilson}
  Let~$p$ be an odd prime number, $a$ and~$b$ in~$\F_p$ such that $a\neq 0$, and let~$w \colon \F_p \to \F_p$ be defined by~$w(n) \= an +b$.
  Denoting $s' \= -a^{-1}b$, we have
  \begin{displaymath}
    \prod_{s \in \F_p \setminus \{s'\}} w(s) = -1
  \text{ and }
    \sum_{s \in \F_p \setminus\{s'\}} \frac{1}{w(s)} = 0.
  \end{displaymath}
\end{lemma}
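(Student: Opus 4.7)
The plan is to reduce both identities to standard facts about $\F_p^* = \F_p \setminus \{0\}$ by exploiting that $w$ is an affine bijection of $\F_p$. Since $a \neq 0$, the map $w(s) = as + b$ is a bijection of $\F_p$, and $s'$ is by construction its unique zero. Hence, as $s$ ranges over $\F_p \setminus \{s'\}$, the values $w(s)$ run bijectively over $\F_p^*$. This change of variables converts both the product and the sum into expressions that depend only on $p$, not on $a$ or $b$.

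For the product, after reindexing by $t = w(s)$, the left-hand side becomes $\prod_{t \in \F_p^*} t = (p-1)!$, and Wilson's theorem delivers $(p-1)! \equiv -1 \pmod p$. For the sum, the same reindexing yields $\sum_{t \in \F_p^*} t^{-1}$; since inversion is a bijection of $\F_p^*$, this equals $\sum_{t \in \F_p^*} t = \frac{p(p-1)}{2}$. Because $p$ is odd, $\frac{p-1}{2}$ is an integer, so this sum vanishes in $\F_p$. (The parenthetical observation that the odd hypothesis on $p$ is exactly what makes the second identity work is worth noting, since the rest of the argument is characteristic-free.)

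There is no real obstacle: the lemma is a one-line application of Wilson's theorem together with the classical Gauss sum $1 + 2 + \cdots + (p-1) = \binom{p}{2}$. The only care needed is to verify that $w$ restricted to $\F_p \setminus \{s'\}$ is indeed a bijection onto $\F_p^*$, which is immediate from $a \neq 0$ and the definition of $s'$.
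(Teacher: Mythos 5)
Your proof is correct and follows essentially the same approach as the paper: use that $w$ is a bijection of $\F_p$ sending $\F_p \setminus \{s'\}$ onto $\F_p^*$, then apply Wilson's theorem for the product and the vanishing of $\sum_{t \in \F_p^*} t$ (valid since $p$ is odd) for the sum.
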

\begin{proof}
  We use the fact that the nonconstant affine map~$w$ is a bijection of~$\F_p$.
  Together with Wilson's theorem this implies the first assertion.
  The second assertion follows from the fact that, since~$p$ is odd, the sum of all nonzero elements in $\F_p$ is~$0$.
\end{proof}

\begin{proof}[Proof of Proposition~\ref{prop:deltaiteratesshort}]
  Let~$(\Delta_m)_{m = 0}^{+ \infty}$ be given by~\eqref{eq:27} and~\eqref{eq:23}.
  For each integer~$m \ge 1$ define~$\alpha_m$, and $\beta_m$ in the ring~$F_1 \= \Z_{(p)}[x_0, x_1]$ by the recursive relations 
\begin{align}
  \alpha_{m+1} & \= x_0 (qm+1)\alpha_m, \label{receq1short}
  \\ 
  \beta_{m+1} & \= \left[x_0^2 \binom{qm+1}{2} + x_1(qm+1)\right]\alpha_m + x_0 (q(m+1)+1)\beta_m,
                \label{receq2short}
\end{align}
with initial conditions $\alpha_1 \= x_0$ and $\beta_1 \= x_1$.
We prove by induction that for every integer~$m \ge 1$ we have
\begin{equation}
  \label{deltaclaimshort}
  \Delta_m(\zeta)
  \equiv
  \alpha_m\zeta^{qm+1} + \beta_m\zeta^{q(m+1)+1} \mod \langle \zeta^{q(m + 1) + 2} \rangle.
\end{equation}
For $m=1$ this holds by definition. Assume further that it is valid for some $m\geq1$.
Then
\begin{displaymath}
  \begin{split}
  \Delta_{m+1}(\zeta)
  & = \Delta_m(\widehat{f}(\zeta)) - \Delta_m(\zeta)
  \\ &\equiv \alpha_m\zeta^{qm+1}\left[\left(1 + x_0 \zeta^q + x_1\zeta^{2q}  + \cdots\right)^{qm+1} - 1\right]\\
&\qquad + \beta_m\zeta^{q(m+1)+1}\left[\left(1 + x_0 \zeta^q + x_1\zeta^{2q}  + \cdots\right)^{q(m+1)+1}-1\right]
\\ & \qquad
\mod \langle \zeta^{q(m+2)+2} \rangle
\\
&\equiv \alpha_m\left[ \zeta^{q(m+1)+1}x_0 (qm+1) + \zeta^{q(m+2)+1}\left( x_0^2 \binom{qm+1}{2} + x_1 (qm+1)\right)\right] \\
&\qquad+ \beta_m\zeta^{q(m+2)+1}x_0 (q(m+1)+1) \mod \langle \zeta^{q(m+2)+2} \rangle.    
  \end{split}
\end{displaymath}
In view of~\eqref{receq1short} and~\eqref{receq2short}, this proves the induction step and~\eqref{deltaclaimshort}.

By~\eqref{eq:7} and~\eqref{deltaclaimshort}, to prove the proposition it is sufficient to prove
\begin{equation}
  \label{eq:5short}
  \alpha_p \equiv 0 \mod p F_1
  \text{ and }
  \beta_p \equiv x_0^{p - 1} \left( x_0^2 \frac{q + 1}{2} - x_1 \right) \mod p F_1.
\end{equation}
We do this by solving explicitly the linear recurrences described in~\eqref{receq1short}, and \eqref{receq2short}.
By telescoping \eqref{receq1short}, we obtain for every~$m \ge 1$ the solution
\begin{equation}
  \label{alpham}
  \alpha_m = x_0^m \prod_{j=1}^{m-1}(qj+1).
\end{equation}
Taking~$m = p$ we obtain the first congruence in~\eqref{eq:5short}.

On the other hand, inserting \eqref{alpham} in~\eqref{receq2short} yields
\[\beta_{m+1} = \left(x_0^2 \frac{qm}{2} + x_1 \right) x_0^m \prod_{j=1}^m (qj+1) + x_0 (q(m+1)+1)\beta_m.\]
Noting that for every~$j \ge 0$ we have~$qj + 1 > 0$, we utilize the substitution
\[\beta^*_m \= \beta_m \bigg/ \left( x_0^{m - 1} \prod_{j=1}^m (qj+1) \right),\]
which yields
\[\beta^*_{m+1}
  =
  \beta^*_{m} + \left(x_0^2 \frac{qm}{2} + x_1 \right)\frac{1}{q(m+1)+1}.\] 
Using $\beta^*_1 = \frac{x_1}{q + 1}$, we obtain inductively for every~$m \ge 1$
\begin{displaymath}
  \beta^*_m
  =
  \sum_{r=1}^{m} \left(x_0^2 \frac{q(r - 1)}{2} + x_1 \right)\frac{1}{qr + 1}.  
\end{displaymath}
Equivalently,
\begin{equation}\label{betam}
  \beta_m
  =
  x_0^{m - 1} \sum_{r=1}^{m}\left[ \left( x_0^2 \frac{q(r - 1)}{2} + x_1 \right) \prod_{j \in \{1, \ldots, m\} \setminus \{ r \}} (qj+1) \right].
\end{equation}
When~$m = p$ every term in the sum above contains a factor~$p$, except for the unique~$r$ in~$\{1, \ldots, p \}$ such that $qr \equiv -1 \pmod{p}$.
Denote by~$r_0$ this value of~$r$.
Then by Lemma~\ref{wilson}, we have
\begin{displaymath}
  \begin{split}
  \beta_p
  &\equiv
    x_0^{p - 1} \left(\frac{ x_0^2 q(r_0 - 1)}{2} + x_1\right) \prod_{j \in \{ 1, \ldots, p \} \setminus \{ r_0 \}} (qj+1) \mod p F_1
  \\ &\equiv
       x_0^{p - 1} \left(x_0^2 \frac{q+1}{2} - x_1\right) \mod p F_1.    
  \end{split}
\end{displaymath}
This proves the second congruence in~\eqref{eq:5short} and thus the proposition.
\end{proof}

\begin{proof}[Proof of Theorem~\ref{thm:q-ramification}]
  By Proposition~\ref{nf} and our hypothesis that~$q$ is in~$\{1, \ldots, p-1\}$, we have that~$f$ is conjugated to a power series~$g$ in~$\K[[\zeta]]$ of the form
  \[g(\zeta) \equiv \zeta(1 + \zeta^q + \ind(f)\zeta^{2q}) \mod \langle \zeta^{3q+2} \rangle.\]
  Since
  \begin{displaymath}
    i_0(g) = i_0(f) = q
    \text{ and }
    i_1(g) = i_1(f),
  \end{displaymath}
  by Proposition~\ref{laubiecorr} the series~$f$ is $q$-ramified if and only if~$i_1(g) = q (p + 1)$.
  
  Let~$\Z_{(p)}$ and~$F_\infty$ be as in Proposition~\ref{prop:deltaiteratesshort}.
  Moreover, let~$h \colon F_\infty \to \K$ be the unique ring homomorphism extending the reduction map~$\Z_{(p)} \to \F_p$, such that~$h(x_1) = \ind(f)$ and such that for every~$i \ge 2$ the element~$h(x_i)$ of~$\K$ is the coefficient of~$\zeta^{2q + i}$ in~$g$.
Then~$h$ extends to a ring homomorphism~$F_{\infty}[[ \zeta ]] \to \K [[ \zeta ]]$ that maps~$\widehat{f}$ to~$g$.
So, Proposition~\ref{prop:deltaiteratesshort} implies
\begin{displaymath}
  g^{p}(\zeta) - \zeta
\equiv
\resit(f) \zeta^{q(p+1)+1}  \mod \langle \zeta^{q(p+1)+2} \rangle.
\end{displaymath}
This proves that~$i_1(g) = q(p + 1)$ if and only if~$\resit(f) \neq 0$ and completes the proof of the theorem.
\end{proof}

\section{Periodic points of $q$-ramified power series}
\label{s:periodic points}
In this section we give a self-contained proof of Theorem~\ref{thm:q-ramification}, and the proof of Theorem~\ref{thm:lower-bound}.
In doing so, we obtain more information about the coefficients of the iterates of a wildly ramified power series as in Theorem~\ref{thm:q-ramification} (Proposition~\ref{prop:deltaiterates} in~\S\ref{sec:self-contained-proof}).
This extra information is used to prove Theorem~\ref{thm:lower-bound} in~\S\ref{sec:lowerbound}.

The main ingredients in the proofs of Theorems~\ref{thm:q-ramification} and~\ref{thm:lower-bound} are the results on the residue fixed point index in~\S\ref{sec:invariance}, and the following result that is proved in~\S\ref{s:proof of Main Lemma}.

\begin{main}
  Let~$p$ be an odd prime number, and let~$\Z_{(p)}$, $F_1$ and~$F_{\infty}$ be the rings defined in Proposition~\ref{prop:deltaiteratesshort}.
  Moreover, let~$q \ge 1$ be an integer that is not divisible by~$p$, and~$\ell \ge 1$ an integer satisfying
  \begin{displaymath}
    \ell \equiv q \pmod{p},
    \text{ and }
    \ell \le p - 1
    \text{ or }
    2\ell + 1 \le q.
  \end{displaymath}
  Then the power series~$\widehat{f}$ in~$F_\infty[[\zeta]]$ defined by
\[\widehat{f}(\zeta) \= \zeta\left(1 + x_0\zeta^{q} + x_1\zeta^{q+\ell} + \zeta^{q+2\ell}\sum_{i=1}^\infty x_{i+1}\zeta^{i}\right), \]
satisfies the following property: There are~$\beta$ and~$\gamma$ in~$F_1$ such that
\begin{align}
  \label{e:main beta}
  \beta
  & \equiv
  \begin{cases}
    x_0^{p - 1} \left( x_0^2 \frac{q + 1}{2} - x_1 \right) \mod p F_1
    & \text{if $q \le p - 1$};
    \\
    - x_0^{p - 1} x_1 \mod p F_1
    & \text{if $q \ge p + 1$},
  \end{cases}
      \\
      \label{e:main gamma}
      \gamma
  & \equiv
    \begin{cases}
    - x_0^{p - 2} \left(x_0^2 \frac{q + 1}{2} - x_1 \right)^2 \mod p F_1
    & \text{if $q \le p - 1$};
    \\
      - x_0^{p - 2} x_1^2 \mod p F_1
    & \text{if $q \ge p + 1$},
    \end{cases}
      \intertext{ and }
      \widehat{f}^p(\zeta)
  & \equiv
    \zeta \left(1 + \beta \zeta^{qp + \ell} + \gamma \zeta^{qp + 2\ell} \right) \mod \langle p, \zeta^{qp + 2\ell + 2} \rangle.
\end{align}
\end{main}

\subsection{Self-contained proof of Theorem~\ref{thm:q-ramification}}
\label{sec:self-contained-proof}

The goal of this section is to deduce the following proposition from the Main Lemma, which is a more precise version of Theorem~\ref{thm:q-ramification}.
It is also one of the main ingredients of the proof of Theorem~\ref{thm:lower-bound}, which is given in~\S\ref{sec:lowerbound}.

\begin{prop}\label{prop:deltaiterates}
Let~$p$ be an odd prime number and~$\K$ a field of characteristic~$p$.
Furthermore, let~$q$ be in~$\{1, \ldots, p-1 \}$, let~$f$ in~$\K[[\zeta]]$ be of the form
\[f(\zeta)
\equiv
\zeta(1 + a_0\zeta^q + a_1\zeta^{2q}) \mod \langle \zeta^{3q+2}\rangle, \text{ with } a_0\neq 0, \]
and for each integer~$n\geq1$, put
\begin{align*}
  \chi_n
  & \=
    a_0^{\frac{p^{n+1}-1}{p-1}}\left(\frac{q+1}{2}-\frac{a_1}{a_0^2}\right)^{\frac{p^n-1}{p-1}},
    \intertext{ and }
    \psi_n
  & \=
    -a_0^{\frac{p^{n+1}-1}{p-1}+1}\left(\frac{q+1}{2}-\frac{a_1}{a_0^2}\right)^{\frac{p^n-1}{p-1}+1}.
\end{align*}    
Then we have
\begin{displaymath}
  f^{p^n}(\zeta) - \zeta
  \equiv
  \chi_n\zeta^{q\frac{p^{n+1}-1}{p-1}+1} + \psi_n\zeta^{q\frac{p^{n+1}-1}{p-1}+q+1} \mod \langle \zeta^{q\frac{p^{n+1}-1}{p-1}+q+2} \rangle.
\end{displaymath}
In particular, $f$ is $q$-ramified if and only if
\begin{displaymath}
  \resit(f) = \frac{q + 1}{2} - \frac{a_1}{a_0^2} \neq 0.
\end{displaymath}
\end{prop}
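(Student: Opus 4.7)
The plan is to prove the formula for $f^{p^n}(\zeta) - \zeta$ by induction on $n$, using the Main Lemma at each step, and to deduce the $q$-ramification criterion as a direct corollary together with an explicit computation of $\resit(f)$ via Theorem~\ref{thm:closed-formula}. First I would evaluate $\resit(f)$: under the hypothesis, the intermediate coefficients $a_{q+j}$ vanish for $1 \le j \le q - 1$ in the notation of Theorem~\ref{thm:closed-formula}, so in the sum~\eqref{eq:closed-formula} the constraints $\|\MultiInd\| = q$ and $|\MultiInd| = q$ leave only the single multi-index $(q-1, 0, \ldots, 0, 1)$. The resulting evaluation $\ind(f) = a_1/a_0^2$ yields $\resit(f) = \frac{q+1}{2} - \frac{a_1}{a_0^2}$, which is precisely the bracketed factor appearing in $\chi_n$ and $\psi_n$.

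For the base case $n = 1$, I would apply the Main Lemma with $\ell = q$ (the first case, permitted since $q \le p - 1$) under the specializations $x_0 = a_0$ and $x_1 = a_1$. The hypothesis on $f$ modulo $\langle \zeta^{3q+2}\rangle$ supplies exactly the zero coefficients that the form of $\widehat f$ requires, while the remaining $x_j$ absorb the free higher coefficients of $f$. The outputs $\beta = a_0^{p-1}(a_0^2 \frac{q+1}{2} - a_1)$ and $\gamma = -a_0^{p-2}(a_0^2 \frac{q+1}{2} - a_1)^2$ rewrite directly as $\chi_1$ and $\psi_1$, and $Q_1 \= q p + q$ closes this case. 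For the inductive step, setting $g \= f^{p^n}$, I would apply the Main Lemma to $g$ with new parameters $q_{\mathrm{new}} = Q_n$ and $\ell_{\mathrm{new}} = q$; since $Q_n \ge p+1$ for $n \ge 1$ we are in the second case, and after the specializations $x_0 = \chi_n$, $x_1 = \psi_n$ the outputs read $\beta = -\chi_n^{p-1}\psi_n$ and $\gamma = -\chi_n^{p-2}\psi_n^2$. An elementary manipulation of the exponents identifies these with $\chi_{n+1}$ and $\psi_{n+1}$, and $Q_n p + q = Q_{n+1}$ delivers the formula at level $n+1$. The final characterization is then immediate: the formula gives $\mult(f^{p^n}) = Q_n + 1$ if and only if $\chi_n \neq 0$, and since $a_0 \neq 0$ and the exponent of $\resit(f)$ in $\chi_n$ is $(p^n - 1)/(p-1) \ge 1$, this is equivalent to $\resit(f) \neq 0$.

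The principal obstacle is verifying in the inductive step that $g = f^{p^n}$ truly matches the form of $\widehat f$: while the inductive hypothesis delivers the required zeros at positions $\zeta^{Q_n+2}, \ldots, \zeta^{Q_n+q}$, it leaves untouched the intermediate positions $\zeta^{Q_n+q+2}, \ldots, \zeta^{Q_n+2q+1}$ where $\widehat f$ also demands zeros. I plan to handle this by iteratively conjugating $g$ by power series of the form $\zeta(1 + c\zeta^k)$ with $k \in \{q+1, \ldots, 2q\}$, as in Lemma~\ref{removeterms}; the eligibility condition $k \not\equiv Q_n \pmod{p}$ holds because $k - q \in \{1, \ldots, q\} \subset \{1, \ldots, p-1\}$. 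Since the conjugations act trivially on coefficients of degree $\le Q_n + q + 1$ they preserve $\chi_n$ and $\psi_n$, and their effect on the coefficients of $g^p$ at positions $\zeta^{Q_{n+1} + 1}$ and $\zeta^{Q_{n+1} + q + 1}$ is likewise trivial because $\varphi - \mathrm{id}$ has order at least $q + 2$ while $\mult(g^p) \ge p Q_n + 1$. Reading the Main Lemma's conclusion for the conjugate back to $g^p$ then gives the desired formula at level $n+1$.
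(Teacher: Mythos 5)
Your plan follows the paper's own proof quite closely: evaluate $\resit(f)$ from Theorem~\ref{thm:closed-formula}, prove the displayed congruence for $f^{p^n}$ by induction on $n$, apply the Main Lemma (case $q \le p-1$ for $n=0$, case $q\ge p+1$ for $n\ge 1$) after a preliminary conjugation that kills the intermediate coefficients, and read off the recursion $\chi_{n+1}=-\chi_n^{p-1}\psi_n$, $\psi_{n+1}=-\chi_n^{p-2}\psi_n^2$. Your specializations, the identification of the outputs with $\chi_{n+1}$ and $\psi_{n+1}$, and the computation of $\resit(f)$ from~\eqref{eq:closed-formula} are all correct.

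However, there is a genuine gap in the final transfer step, and it is precisely the place that makes the paper's Lemma~\ref{elimination} more delicate than it looks. Writing $g\=f^{p^n}$ and $h\=\varphi\circ g\circ\varphi^{-1}$ for the conjugate to which you apply the Main Lemma, you must transfer the coefficients of $h^p$ at degrees $Q_{n+1}+1$ and $Q_{n+1}+q+1$ back to $g^p=f^{p^{n+1}}$. Since $h^p=\varphi\circ g^p\circ\varphi^{-1}$, the standard estimate gives
\[
\ord_\zeta\bigl(h^p-g^p\bigr)\ \ge\ \mult(g^p)+\mult(\varphi)-1.
\]
You invoke this with $\mult(g^p)\ge pQ_n+1$ (from Lemma~\ref{lemma:delta}) and $\mult(\varphi)\ge q+2$, which only yields $\ord_\zeta(h^p-g^p)\ge pQ_n+q+2=Q_{n+1}+2$. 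This protects the coefficient at $\zeta^{Q_{n+1}+1}$ (so $\chi_{n+1}$ transfers) but \emph{not} the one at $\zeta^{Q_{n+1}+q+1}=\zeta^{pQ_n+2q+1}$, since $pQ_n+2q+1\ge Q_{n+1}+2$ whenever $q\ge1$. Without $\psi_{n+1}$ the induction collapses at the next step, because the Main Lemma needs both $x_0=\chi_{n+1}$ and $x_1=\psi_{n+1}$.

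The fix is to use the sharper bound $\mult(g^p)\ge Q_{n+1}+1$. This is available to you: either from Proposition~\ref{prop:qramifisminramif} applied to $f$ (since $i_{n+1}(f)=\mult(f^{p^{n+1}})-1\ge qd_{n+1}=Q_{n+1}$), or, as the paper does, by first reading it off from the Main Lemma's conclusion for $h^p$ (which shows $\mult(h^p)\ge Q_{n+1}+1$, and $\mult(g^p)=\mult(h^p)$ since they are conjugate) and only then invoking the transfer estimate. With $\mult(g^p)\ge Q_{n+1}+1$ you get $\ord_\zeta(h^p-g^p)\ge Q_{n+1}+q+2$, which protects both coefficients. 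Once this is patched, your argument is correct and in all other respects parallels the paper's proof.
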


The proof of Proposition~\ref{prop:deltaiterates} is given after the following lemma.

\begin{lemma}\label{elimination}
  Let~$p$ be an odd prime number, $q$ in~$\{1,\ldots,p-1\}$, and~$d \ge 1$ an integer satisfying $d\equiv 1 \pmod{p}$.
  Furthermore, let~$\K$ be a field of characteristic~$p$ and let~$f$ in~$\K[[\zeta]]$ be of the form
  \[f(\zeta)
    \equiv
    \zeta\left(1+ a_0\zeta^{qd} + a_1\zeta^{q(d+1)}\right)  \mod \langle \zeta^{q(d+1)+2} \rangle, \text{ with } a_0\neq 0.\]
  Then there is a polynomial~$\varphi$ with coefficients in~$\K$ such that~$\mult(\varphi) \ge {q + 2}$, and such that~$\varphi$ conjugates~$f$ to a power series~$g$ satisfying
  \begin{align}
    \label{eq:12}
    g(\zeta)
    & \equiv
      \zeta\left(1 + a_0\zeta^{qd} + a_1\zeta^{q(d+1)}\right) \mod \langle \zeta^{q(d+1) + p + 1} \rangle,
      \intertext{ and }
      \notag
      g^p(\zeta)
      & \equiv f^p(\zeta)  \mod \langle \zeta^{i_1(f)+ q + 2} \rangle.
  \end{align}
\end{lemma}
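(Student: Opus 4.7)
The strategy is to build $\varphi$ as a composition of elementary polynomial coordinate changes $\varphi_m(\zeta) \= \zeta(1 + c_m \zeta^m)$, in the spirit of Lemma~\ref{removeterms}, applied iteratively for $m = q + 1, q + 2, \ldots, q + p - 1$ so as to successively eliminate the $p - 1$ unwanted coefficients in the positions $\zeta^{q(d+1)+2}, \ldots, \zeta^{q(d+1)+p}$. Since each $\varphi_m$ has multiplicity $m + 1 \ge q + 2$, the composition $\varphi \= \varphi_{q+p-1} \circ \cdots \circ \varphi_{q+1}$ automatically satisfies $\mult(\varphi) \ge q + 2$, as required.

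For the first part of the conclusion, I would verify a variant of Lemma~\ref{removeterms} in which the leading exponent $q + 1$ is replaced by $qd + 1$: the computation in that proof carries over verbatim with $q$ replaced by $qd$, showing that $\varphi_m$ preserves the coefficients of $\zeta^{qd+1}, \ldots, \zeta^{qd+m}$, changes the coefficient of $\zeta^{qd+m+1}$ additively by $(m - qd) c_m a_0$, and may affect coefficients of higher order only. Solvability of $c_m$ in $\K$ demands $m \not\equiv qd \pmod{p}$, which under the hypothesis $d \equiv 1 \pmod{p}$ reduces to $m \not\equiv q \pmod{p}$. This holds automatically for every $m \in \{q+1, \ldots, q+p-1\}$, since these $p - 1$ consecutive integers realize precisely the nonzero residue classes modulo $p$. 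Applying the $\varphi_m$'s successively, each new step zeroes the coefficient of $\zeta^{qd+m+1}$ while leaving all lower coefficients intact; the value $a_1$ occupying position $\zeta^{q(d+1)+1} = \zeta^{qd+q+1}$ is thus preserved throughout, since the only value of $m$ that would affect it, namely $m = q$, is exactly the one we skip.

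For the second part, set $\Delta(w) \= f^p(w) - w$, so that $\ord_\zeta(\Delta) = i_1(f) + 1$, and use $g^p = \varphi \circ f^p \circ \varphi^{-1}$. A first-order Taylor expansion of $\varphi$ about $\varphi^{-1}(\zeta)$ yields
\begin{displaymath}
g^p(\zeta) - \zeta \;=\; \varphi'(\varphi^{-1}(\zeta)) \cdot \Delta(\varphi^{-1}(\zeta)) \;+\; R(\zeta),
\end{displaymath}
where the remainder $R(\zeta)$ is quadratic in $\Delta(\varphi^{-1}(\zeta))$ and hence has $\ord_\zeta(R) \ge 2 i_1(f) + 2$. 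Since $\mult(\varphi) \ge q + 2$ gives $\varphi^{-1}(\zeta) \equiv \zeta \pmod{\zeta^{q+2}}$ and $\varphi'(\zeta) \equiv 1 \pmod{\zeta^{q+1}}$, expanding $\Delta(\zeta + v) - \Delta(\zeta)$ as a power series in $v(\zeta) \= \varphi^{-1}(\zeta) - \zeta$ yields $\Delta(\varphi^{-1}(\zeta)) \equiv \Delta(\zeta) \pmod{\zeta^{i_1(f) + q + 2}}$, and multiplying by $\varphi'(\varphi^{-1}(\zeta)) = 1 + O(\zeta^{q+1})$ preserves this congruence. Since $i_1(f) \ge i_0(f) = qd \ge q$, the remainder $R$ lies in the same congruence class, so $g^p(\zeta) \equiv f^p(\zeta) \pmod{\zeta^{i_1(f)+q+2}}$.

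The main obstacle I anticipate is in the first part: one must carefully track which coefficients are protected by each $\varphi_m$ and verify that the arithmetic hypothesis $d \equiv 1 \pmod{p}$ is exactly what guarantees the existence of every $c_m$ in characteristic $p$. Without this hypothesis, one of the $p - 1$ elementary steps could fail. The second part, by contrast, is a routine order-comparison argument, but it is decisive for the applications of this lemma since it says that a single coordinate change simultaneously normalizes the low-order part of $f$ and perturbs $f^p$ only well past its leading term.
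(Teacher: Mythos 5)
Your proof is correct and follows essentially the same approach as the paper's: iterate Lemma~\ref{removeterms} with $q$ replaced by $qd$ and $k = q+1, \ldots, q+p-1$ to kill the intermediate coefficients, then observe that since $\mult(\varphi) \ge q+2$, conjugating $f^p$ changes it only modulo $\langle \zeta^{i_1(f)+q+2} \rangle$ (the paper cites Lemma~\ref{removeterms} for this last step; your direct Taylor expansion is the same calculation unwound). One small slip in wording: the residues $m \bmod p$ for $m \in \{q+1, \ldots, q+p-1\}$ are not ``precisely the nonzero residue classes''; rather they omit exactly the class of $q$, which is what you actually need.
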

\begin{proof}
  Noting that~$qd \equiv q \pmod{p}$, we can apply Lemma~\ref{removeterms} successively with~$q$ replaced by~$qd$, and with
  \begin{displaymath}
    k
    =
    q + 1, \ldots, q + p - 1,
  \end{displaymath}
  to obtain a polynomial~$\varphi$ satisfying~$\mult(\varphi) \ge q + 2$, such that~$g \= \varphi \circ f \circ \varphi^{-1}$ satisfies~\eqref{eq:12}.

  To prove the second assertion, note that~$\varphi$ also conjugates~$f^p$ to~$g^p$, so by Lemma~\ref{removeterms}
  \begin{displaymath}
    i_1(f) = i_1(g)
    \text{ and }
    f^p(\zeta) \equiv g^p(\zeta) \mod \langle \zeta^{i_1(f) + \mult(\varphi)} \rangle.
  \end{displaymath}
  The desired assertion follows from the inequality~$\mult(\varphi) \ge q + 2$.
  This completes the proof of the lemma.
\end{proof}

\begin{proof}[Proof of Proposition~\ref{prop:deltaiterates}]
The last assertion is a direct consequence of the first and of~\eqref{eq:genericity polynomial}.
  
  To prove the first assertion, for each integer $n\geq 0$ put $d_n \= 1 + p + \cdots + p^n$, and note that
\begin{displaymath}
  d_n \equiv 1 \pmod{p},
  \text{ and }
  d_np+1 = d_{n + 1}.
\end{displaymath}
We first prove by induction that for every integer~$n \ge 0$ there are~$\chi_n$ and~$\psi_n$ in~$\K$, such that
\begin{equation}
  \label{eq:13}
  f^{p^n}(\zeta)
  \equiv
  \zeta\left(1 + \chi_{n}\zeta^{qd_n} + \psi_{n}\zeta^{q(d_n+1)}\right) \mod \langle \zeta^{q(d_n+1)+2} \rangle.
\end{equation}
The case~$n = 0$ is trivial, with
\begin{equation}
  \label{eq:10}
  \chi_0 = a_0
  \text{ and }
  \psi_0 = a_1.
\end{equation}
Let~$n \ge 0$ be a given integer, and assume the desired assertion is true for~$n$.
By Lemma~\ref{elimination} there is a power series~$g$ with coefficients in~$\K$ such that
\begin{align}
  \notag
  g(\zeta)
  & \equiv
  \zeta\left(1 + \chi_{n}\zeta^{qd_n} + \psi_{n}\zeta^{q(d_n+1)}\right) \mod \langle \zeta^{q(d_n+2)+2} \rangle,
  \intertext{ and }
  \label{eq:14}
  g^p(\zeta)
  & \equiv
  f^{p^{n + 1}}(\zeta) \mod \langle \zeta^{i_{n + 1}(f) + q + 2} \rangle.
\end{align}
  Define $\Z_{(p)}, F_1$ and $F_\infty$ as in Proposition~\ref{prop:deltaiteratesshort}.
  Moreover, let $\widehat{g}$ in~$F_\infty[[\zeta]]$ be of the form
  \[ \widehat{g}(\zeta)
    \=
    \zeta\left(1+x_0 \zeta^{qd_n} + x_1 \zeta^{q(d_n + 1)} + \zeta^{q(d_n + 2)}\sum_{j=1}^{+ \infty}x_{j+1} \zeta^j\right), \] 
  let $h\colon F_\infty \to \K$ be the unique ring homomorphism extending the reduction map $\Z_{(p)} \to \F_p$, such that $h(x_0) = \chi_n$, $h(x_1) = \psi_n$, and such that for every $i\geq 2$ the element $h(x_i)$ of $\K$ is the coefficient of $\zeta^{q(d_n+2)+i}$ in~$\widehat{g}$.
  Then~$h$ extends to a ring homomorphism $F_\infty[[\zeta]] \to \K[[\zeta]]$ that maps $\widehat{g}$ to~$g$.
  In the case~$n = 0$, note that~$\widehat{f}$ in the Main Lemma is equal to~$\widehat{g}$, so
  \begin{multline*}
    g^p(\zeta)
    \equiv
    \zeta \left( 1 + \chi_0^{p+1}\left(\frac{q+1}{2} - \frac{\psi_0}{\chi_0^2}\right)\zeta^{q(p+1)}
    \right. \\ \left.
      - \chi_0^{p+2} \left(\frac{q+1}{2}-\frac{\psi_0}{\chi_0^2}\right)^2\zeta^{q(p+2)} \right) \mod \langle \zeta^{q(p+2)+2} \rangle.
  \end{multline*}
  Together with~\eqref{eq:14} with~$n = 0$, this implies
  \begin{displaymath}
    i_1(f) = i_1(g) \ge q(p + 1) = qd_1,
  \end{displaymath}
  and~\eqref{eq:13} with~$n = 1$,
  \begin{equation}
    \label{eq:11}
    \chi_1
    \=
    \chi_0^{p+1}\left(\frac{q+1}{2} - \frac{\psi_0}{\chi_0^2} \right)
      \text{ and }
      \psi_1
      \= - \chi_0^{p+2} \left(\frac{q+1}{2}-\frac{\psi_0}{\chi_0^2}\right)^2.
    \end{equation}
    In the case~$n \ge 1$, the Main Lemma with~$q$ replaced by~$qd_n$ and~$\ell$ replaced by~$q$, implies
  \begin{displaymath}
    g^p(\zeta)
    \equiv
    \zeta \left( 1 - \chi_n^{p - 1} \psi_n \zeta^{q(d_np+1)} - \chi_n^{p - 2} \psi_n^2 \zeta^{q(d_n p + 2)}\right)
      \mod \langle \zeta^{q(d_n p+2)+2} \rangle.
    \end{displaymath}
    Together with~\eqref{eq:14} this implies
    \begin{displaymath}
      i_{n + 1}(f) = i_1(g) \ge q(d_n p + 1) = q d_{n + 1}
    \end{displaymath}
    and~\eqref{eq:13} with
    \begin{equation}
      \label{eq:15}
        \chi_{n + 1}
        =
        - \chi_n^{p - 1} \psi_n
        \text{ and }
        \psi_{n + 1}
        =
        - \chi_n^{p - 2} \psi_n^2.      
    \end{equation}
      This completes the proof of the induction step and of~\eqref{eq:13} for every integer~$n \ge 0$.
      Then the proposition follows from a direct computation using the recursion~\eqref{eq:15}, together with~\eqref{eq:10} and~\eqref{eq:11}.
\end{proof}

\subsection{Lower bound of the norm of periodic points}
\label{sec:lowerbound}
The goal of this section is to prove Theorem~\ref{thm:lower-bound}.
We first introduce some notation and recall a result from~\cite{LindahlRiveraLetelier2015}.

Let~$(\K, | \cdot |)$ be an ultrametric field, and recall that~$\mathcal{O}_{\K}$ denotes the ring of integers of~$\K$, and~$\mathfrak{m}_{\K}$ the maximal ideal of~$\mathcal{O}_{\K}$.
Denote the residue field of~$\K$ by~$\widetilde{\K} \= \mathcal{O}_{\K} / \mathfrak{m}_{\K}$, and for an element~$a$ of~$\mathcal{O}_{\K}$, denote by the~$\widetilde{a}$ its reduction in~$\widetilde{\K}$.
The reduction of a power series~$f$ in~$\mathcal{O}_{\K}[[\zeta]]$, is the power series~$\widetilde{f}$ in~$\widetilde{\K}[[\zeta]]$ whose coefficients are the reductions of the corresponding coefficients of~$f$.
For a power series~$f$ in $\mathcal{O}_{\K}[[\zeta]]$, the \emph{Weierstrass degree} $\wideg(f)$ of~$f$ is the order in $\widetilde{\K}[[\zeta]]$ of the reduction $\widetilde{f}$ of~$f$.
Note that if $\wideg(f)$ is finite, then the number of zeros of~$f$ in~$\mathfrak{m}_{\K}$, counted with multiplicity, is less than or equal to~$\wideg(f)$, see, \emph{e.g.}, \cite[\S{VI}, Theorem~9.2]{Lang2002}.

In the case the characteristic~$p$ of~$\widetilde{\K}$ is positive, and~$f$ is a wildly ramified power series in~$\mathcal{O}_{\K}[[\zeta]]$, it is well-known that the minimal period of every periodic point of~$f$ in~$\mathfrak{m}_{\K}$ is a power of~$p$.

\begin{mydef}
  Let $p$ be a prime number and $\K$ field of characteristic $p$.
  For a wildly ramified power series~$f$ in $\K[[\zeta]]$, define for each integer $n \geq 0$ the element~$\delta_n(f)$ of $\K$ as follows: Put $\delta_n(f) \= 0$ if $i_n(f) = +\infty$, and otherwise let $\delta_n(f)$ be the coefficient of~$\zeta^{i_n(f)+1}$ in~$f^{p^n}(\zeta)$.
\end{mydef}

\begin{lemma}[Special case of Lemma~2.4 in \cite{LindahlRiveraLetelier2015}]\label{lem24}
  Let~$p$ be a prime number and $(\K,|\cdot|)$ an ultrametric field of characteristic $p$.
  Then, for every wildly ramified power series~$f$ in $\mathcal{O}_{\K}[[\zeta]]$, the following properties hold.
\begin{enumerate}
\item
  Let~$w_0$ in $\mathfrak{m}_{\K}$ be a fixed point of~$f$ different from~$0$.
  Then we have
  \begin{displaymath}
    |w_0|\geq |\delta_0(f)|
  \end{displaymath}
  with equality if and only if
  \begin{displaymath}
    \wideg(f(\zeta)-\zeta)= i_0(f)+2.
  \end{displaymath}
\item
  Let $n\geq1$ be an integer and~$\zeta_0$ in~$\mathfrak{m}_{\K}$ a periodic point of~$f$ of minimal period~$p^n$.
  If in addition $i_n(f)<+\infty$, then we have
  \begin{displaymath}
    |\zeta_0|
    \geq
    \left|\frac{\delta_n(f)}{\delta_{n-1}(f)}\right|^{\frac{1}{p^n}},
  \end{displaymath}
  with equality if and only if
  \begin{equation}
    \label{widegzeta}
    \wideg\left(\frac{f^{p^n}(\zeta)-\zeta}{f^{p^{n-1}}(\zeta)-\zeta}\right) = i_n(f)-i_{n-1}(f)+p^n.
  \end{equation}
  Moreover, if (\ref{widegzeta}) holds, then the cycle containing $\zeta_0$ is the only cycle of minimal period $p^n$ of~$f$ in $\mathfrak{m}_{\K}$, and for every point $\zeta_0'$ in this cycle $|\zeta_0'|=\left|\frac{\delta_n(f)}{\delta_{n-1}(f)}\right|^{\frac{1}{p^n}}$.
\end{enumerate}
\end{lemma}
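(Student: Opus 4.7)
The plan is to apply Weierstrass preparation twice, converting each claim about zeros in $\mathfrak{m}_\K$ into a polynomial statement controlled by Vieta's formula. Throughout I may assume the relevant Weierstrass degrees are finite, since otherwise the Weierstrass polynomial collapses to a constant and the corresponding claim is vacuous.

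For part (1), write $g(\zeta) \= f(\zeta)-\zeta$, so $\ord_\zeta(g) = i_0(f)+1$ with leading coefficient $\delta_0(f)$. Weierstrass preparation gives $g = u\cdot P$ with $u$ a unit of $\mathcal{O}_\K[[\zeta]]$ and $P$ distinguished of degree $\wideg(g)$; factoring out the zero at the origin produces $P(\zeta) = \zeta^{i_0(f)+1}\tilde{P}(\zeta)$ with $\tilde{P}\in\mathcal{O}_\K[\zeta]$ monic of degree $\wideg(g)-i_0(f)-1$ and $|\tilde{P}(0)| = |\delta_0(f)|$. The nonzero fixed points of $f$ in $\mathfrak{m}_{\overline{\K}}$ are exactly the roots of $\tilde{P}$, each of absolute value strictly less than $1$. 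Isolating the factor at $w_0$ in Vieta's product gives $|w_0|\cdot\prod_{r\neq w_0}|r| = |\delta_0(f)|$; since each remaining $|r|<1$, this yields $|w_0|\ge|\delta_0(f)|$, with equality iff $\deg\tilde{P}=1$, iff $\wideg(g) = i_0(f)+2$.

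For part (2), set $g \= f^{p^n}(\zeta)-\zeta$, $h \= f^{p^{n-1}}(\zeta)-\zeta$, and $F \= f^{p^{n-1}}$. The preparatory step is to show $h\mid g$ in $\mathcal{O}_\K[[\zeta]]$: the formal identity $h(X+Y) - h(X) = Y\cdot h^*(X,Y)$ for some $h^*\in\mathcal{O}_\K[[X,Y]]$ yields by induction on $k$ that $F^k(\zeta)\equiv\zeta\mod h$, so in particular $g = F^p(\zeta)-\zeta\equiv 0\mod h$; Euclidean division of the two Weierstrass polynomials of $g$ and $h$ in $\mathcal{O}_\K[\zeta]$ then upgrades this to $G\=g/h\in\mathcal{O}_\K[[\zeta]]$. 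Write $G(\zeta) = \zeta^{i_n(f)-i_{n-1}(f)}\tilde{G}(\zeta)$ with $\tilde{G}(0) = \delta_n(f)/\delta_{n-1}(f)$, and apply Weierstrass preparation once more to obtain a distinguished polynomial $\tilde{Q}\in\mathcal{O}_\K[\zeta]$ of degree $\wideg(\tilde{G})$ satisfying $|\tilde{Q}(0)| = |\delta_n(f)/\delta_{n-1}(f)|$, whose roots in $\mathfrak{m}_{\overline{\K}}$ are exactly the periodic points of $f$ of minimal period $p^n$.

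Because $f$ is wildly ramified, for any $z\in\mathfrak{m}_{\overline{\K}}$ we have $|f(z)-z|<|z|$, hence $|f(z)|=|z|$, so the $p^n$ points of the orbit of $\zeta_0$ share the common absolute value $|\zeta_0|$. Vieta's formula then reads
\[ \left|\frac{\delta_n(f)}{\delta_{n-1}(f)}\right| = |\tilde{Q}(0)| = |\zeta_0|^{p^n}\prod_{\text{other roots}}|r|, \]
and each remaining root satisfies $|r|<1$, so $|\zeta_0|^{p^n}\ge|\delta_n(f)/\delta_{n-1}(f)|$. Equality holds iff there are no other roots, iff $\deg\tilde{Q}=p^n$, iff $\wideg(G) = i_n(f)-i_{n-1}(f)+p^n$. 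In this case $\tilde{Q}(\zeta) = \prod_{i=0}^{p^n-1}(\zeta - f^i(\zeta_0))$, so the cycle of $\zeta_0$ is the unique cycle of minimal period $p^n$ in $\mathfrak{m}_\K$ and every point of it has norm $|\delta_n(f)/\delta_{n-1}(f)|^{1/p^n}$. The main technical hurdle is the upgrade from the formal congruence $g\equiv 0\mod h$ to a factorization with quotient in $\mathcal{O}_\K[[\zeta]]$, together with the bookkeeping that ensures $\tilde{Q}$ accounts for all periodic points of minimal period $p^n$ with their correct multiplicities.
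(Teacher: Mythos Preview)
The paper does not supply a proof of this lemma: it is quoted as a special case of \cite[Lemma~2.4]{LindahlRiveraLetelier2015} and invoked without argument in the proof of Theorem~\ref{thm:lower-bound}. Your sketch is therefore an independent reconstruction rather than something to compare against the present paper.

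Your approach via Weierstrass preparation and Vieta is the standard one and is essentially correct, but several points need tightening. First, Weierstrass preparation requires~$\K$ complete, which is not assumed; pass to the completion at the outset. Second, the Taylor-type identity you invoke should be written for~$F$ (or a generic series), not for~$h$: the induction step is $F(\zeta + hA) = F(\zeta) + hA \cdot F^*(\zeta, hA)$, and this already shows $F^k(\zeta)-\zeta\in h\,\mathcal{O}_\K[[\zeta]]$, so the separate ``upgrade via Euclidean division'' is unnecessary. Third, the assertion that the roots of~$\tilde Q$ are \emph{exactly} the periodic points of minimal period~$p^n$ is too strong: a nonzero root of~$h$ can occur in~$g$ with strictly larger multiplicity and hence survive in~$G$. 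This does not damage the inequality (you only use that the $p^n$ orbit points of~$\zeta_0$ lie among the roots, all with the common norm~$|\zeta_0|$) or the equality analysis (when $\deg\tilde Q=p^n$ the orbit already exhausts the root set), but the sentence as written overstates what you have shown.

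Finally, your opening disclaimer that the case of infinite Weierstrass degree is ``vacuous'' is not correct: the reduction of~$\tilde G$ can vanish identically while~$f$ still has genuine period-$p^n$ orbits in~$\mathfrak m_\K$, and the inequality must still be established. One clean fix, avoiding preparation on~$\tilde G$ altogether, is to apply Weierstrass \emph{division} of~$\tilde G$ by the distinguished polynomial $\prod_{j=0}^{p^n-1}\bigl(\zeta - f^j(\zeta_0)\bigr)$ (which requires no finiteness hypothesis on~$\wideg(\tilde G)$), observe that the remainder vanishes, and read off the constant-term estimate $|\delta_n(f)/\delta_{n-1}(f)|\le |\zeta_0|^{p^n}$ directly.
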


\begin{proof}[Proof of Theorem~\ref{thm:lower-bound}]
  The assertion about fixed points is a direct consequence of ${\delta_0(f) = a}$ and Lemma~\ref{lem24}(1).
  
  To prove the statement about periodic points that are not fixed, note first that this statement holds trivially in the case~$\resit(f) = 0$.
  Thus, we assume that~$\resit(f) \neq 0$, and therefore~$f$ is $q$-ramified by Theorem~\ref{thm:q-ramification}.
  In particular, for every integer~$n \ge 1$ we have~$i_n(f) < + \infty$.
  On the other hand, by Proposition~\ref{prop:deltaiterates} we have for every integer~$n \ge 1$
  \begin{displaymath}
    \delta_n(f) = a^{\frac{p^{n + 1} - 1}{p - 1}} \resit(f)^{\frac{p^n-1}{p-1}}.
  \end{displaymath}
  Hence, by Lemma~\ref{lem24}(2) we have for every periodic point~$\zeta_0$ in~$\mathfrak{m}_k$ of minimal period~$p^n$,
\begin{equation}\label{normbound2}
  |\zeta_0|
  \geq
  \left|\frac{\delta_n(f)}{\delta_{n-1}(f)}\right|^{\frac{1}{p^n}}
  =
  \left| a^{p^n} \resit(f)^{p^{n-1}} \right|^{\frac{1}{p^n}}
  =
  |a| \cdot | \resit(f)|^{\frac{1}{p}}.
\end{equation}
This completes the proof of Theorem~\ref{thm:lower-bound}.
\end{proof}

\begin{rem}
Equality in \eqref{normbound2} is, as seen in Lemma~\ref{lem24}, given by a condition on the reduction of $f$. In the case of equality, for $q$-ramified power series  \emph{all} periodic points in the open unit disk, which are not fixed by~$f$, in fact lie on the sphere about the origin of radius~$|\delta_0(f)| \cdot |\resit(f)|^{\frac{1}{p}}$, see Example~\ref{e:optimality} in~\S\ref{sec:furth-results-exampl}.
\end{rem}

\section{Proof of the Main Lemma}
\label{s:proof of Main Lemma}

The goal of this section is to prove the Main Lemma.
We use the strategy introduced in~\cite[\S3.2]{RiveraLetelier2003} and~\cite{LindahlRiveraLetelier2013}, using the power series~$(\Delta_m)_{m = 0}^{+ \infty}$ defined by~\eqref{eq:27} and~\eqref{eq:23}.
The proof is naturally divided into the cases~$q \le p - 1$ and~$q \ge p + 1$.

\partn{Case 1, $q \le p - 1$}
Note that in this case we have~$\ell = q$.
For each integer~$m \ge 1$ define~$\alpha_m$, $\beta_m$ and $\gamma_m$ in~$F_1$ by the recursive relations 
\begin{align}
\alpha_{m+1} & \= x_0 (qm+1)\alpha_m \label{receq1}\\ 
\beta_{m+1} & \= \left[ x_0^2 \binom{qm+1}{2} + x_1(qm+1)\right]\alpha_m + x_0 (q(m+1)+1)\beta_m \label{receq2}\\
\gamma_{m+1} & \= \left[ x_0^3 \binom{qm+1}{3} + x_0x_1 qm (qm+1)\right]\alpha_m \label{receq4}\\ 
             & \quad + \left[ x_0^2 \binom{q(m+1)+1}{2}+ x_1(q(m+1)+1)\right]\beta_m \notag
               \\ & \quad + x_0 (q(m+2)+1)\gamma_m, \notag
\end{align}
with initial conditions $\alpha_1 \= x_0$, $\beta_1 \= x_1$, and $\gamma_1 \= 0$.
We claim that for every integer~$m \ge 1$ we have
\begin{equation}
  \label{deltaclaim}
  \Delta_m(\zeta)
  \equiv
  \alpha_m\zeta^{qm+1} + \beta_m\zeta^{q(m+1)+1} + \gamma_m\zeta^{q(m+2)+1}\mod \langle \zeta^{q(m + 2) + 2} \rangle.
\end{equation}
For $m=1$ this holds by definition.
Assume this is valid for some $m\geq1$.
Then
\begin{displaymath}
  \begin{split}
  \Delta_{m+1}(\zeta)
  & = \Delta_m(\widehat{f}(\zeta)) - \Delta_m(\zeta)\\
&\equiv \alpha_m\zeta^{qm+1}\left[\left(1 + x_0 \zeta^q + x_1\zeta^{2q} + x_2\zeta^{3q+1} + \cdots\right)^{qm+1} - 1\right]\\
&\qquad + \beta_m\zeta^{q(m+1)+1}\left[\left(1 + x_0 \zeta^q + x_1\zeta^{2q} + x_2\zeta^{3q+1} + \cdots\right)^{q(m+1)+1}-1\right]\\
&\qquad + \gamma_m\zeta^{q(m+2)+1}\left[\left(1 + x_0 \zeta^q + x_1\zeta^{2q} + x_2\zeta^{3q+1} + \cdots\right)^{q(m+2) +1}-1\right]\\
&\qquad \mod \langle \zeta^{q(m+3)+2} \rangle\\
  &\equiv
  \alpha_m\left[ \zeta^{q(m+1)+1}x_0 (qm+1) + \zeta^{q(m+2)+1}\left(x_0^2 \binom{qm+1}{2} + x_1(qm+1)\right)
  \right. \\ &  \qquad \left. 
    + \zeta^{q(m+3)+1}\left( x_0^3 \binom{qm+1}{3} + x_0 x_1 qm (qm+1)\right) \right]
    \\
    &\qquad+ \beta_m\Bigg[ \zeta^{q(m+2)+1} x_0 (q(m+1)+1)
    \\ & \qquad 
    + \zeta^{q(m+3)+1}\left( x_0^2 \binom{q(m+1)+1}{2} + x_1(q(m+1)+1)\right) \Bigg]
\\
&\qquad + \gamma_m\zeta^{q(m+3)+1} x_0 (q(m+2)+1) \mod \langle \zeta^{q(m+3)+2} \rangle,    
  \end{split}
\end{displaymath}
which proves the induction step and~\eqref{deltaclaim}.

In view of~\eqref{eq:7} and~\eqref{deltaclaim}, to prove the Main Lemma with $q \le p - 1$, it is sufficient to prove
\begin{equation}
  \label{e:main alpha}
  \alpha_p \equiv 0 \mod p F_1,
\end{equation}
\eqref{e:main beta} with~$\beta = \beta_p$, and~\eqref{e:main gamma}~$\gamma = \gamma_p$.
The first~2 are given by Proposition~\ref{prop:deltaiteratesshort}, so we only need to prove the latter.
To do this, we solve~\eqref{receq4} explicitly, utilizing the explicit solutions of~\eqref{receq1} and~\eqref{receq2} given in the proof of Proposition~\ref{prop:deltaiteratesshort}.
Assume first~$q \equiv - 1 \pmod{p}$.
By~\eqref{alpham} and~\eqref{betam} with~$m = p - 1$, we have
\begin{displaymath}
  \alpha_{p - 1} \equiv 0 \mod p F_1
  \text{ and }
  \beta_{p - 1} \equiv - x_0^{p - 2} x_1 \mod p F_1.
\end{displaymath}
Combined with~\eqref{receq4} with~$m = p - 1$, this implies
\begin{displaymath}
  \gamma_p
  \equiv
  - x_0^{p - 2} x_1^2 \mod p F_1.
\end{displaymath}
This proves~\eqref{e:main gamma} with~$\gamma = \gamma_p$, when~$q \equiv - 1 \pmod{p}$.

It remains to prove~\eqref{e:main gamma} with~$\gamma = \gamma_p$, when~$q \not\equiv - 1 \pmod{p}$.
Denote by~$r_0$ the unique~$r$ in~$\{1, \ldots, p - 1 \}$ such that $qr \equiv - 1 \pmod{p}$.
By our assumption~$q \not\equiv - 1 \pmod{p}$, we have~$r_0 \neq 1$ and therefore
\begin{equation}
  \label{eq:4}
  r_0 \in \{2, \ldots, p - 1 \}.
\end{equation}
Noting that for every~$j \ge 0$ we have~$qj + 1 > 0$, we use the substitution
\[\gamma^*_m \=
\frac{\gamma_mx_0^2}{(q(m+1)+1)(qm+1)\alpha_m}. \]
Note that by~\eqref{alpham} we have
\[\gamma^*_m =
  \gamma_m\bigg/ \left( x_0^{m - 2} \prod_{j=1}^{m + 1} (qj+1) \right). \]
  On the other hand, by~\eqref{alpham} and~\eqref{betam} we get 
  \[\frac{\beta_m}{\alpha_m} = \frac{1}{x_0} \sum_{r=1}^m \left(x_0^2\frac{q(r-1)}{2} + x_1\right)\frac{qm+1}{qr+1}. 
  \]
  By plugging these equations into~\eqref{receq4}, we obtain
\begin{displaymath}
  \begin{split}
  \gamma^*_{m+1}
  & =
    \gamma^*_m + \frac{qm}{(q(m+1)+1)(q(m+2)+1)} x_0^2 \left( x_0^2 \frac{qm-1}{6} + x_1 \right)
  \\
  & \quad +\frac{1}{q(m+2)+1} \left( x_0^2 \frac{q(m+1)}{2} + x_1 \right)
      \sum_{r=1}^{m} \left(x_0^2 \frac{q(r - 1)}{2} + x_1 \right) \frac{1}{qr + 1}.    
  \end{split}
\end{displaymath}
Using~$\gamma^*_{1} = 0$ and defining for every integer~$s$
\[H(s) \= x_0^2 \frac{qs}{2}+x_1, \]
we obtain inductively for each $m\geq 1$
\begin{multline*}
  \gamma^*_{m}
  =
    \sum_{s=1}^{m-1} \left[ \frac{qs}{(q(s+1)+1)(q(s+2)+1)} x_0^2 \left(x_0^2 \frac{qs - 1}{6} + x_1 \right)
  \right. \\ \left.
    + \frac{H(s+1)}{q(s+2)+1}\sum_{r=1}^{s}\frac{H(r - 1)}{qr + 1} \right].
\end{multline*}
Equivalently,
\begin{multline}
  \label{dmexplicit}
    \gamma_{m}
    =
    x_0^{m - 2} \sum_{s=1}^{m-1}\left[ x_0^2 qs \left(x_0^2 \frac{qs - 1}{6} + x_1 \right) \prod_{j\in \{1,\ldots,m+1\}\setminus\{s+1,s+2\}}(qj+1)
  \right. \\ \left.
    + H(s+1)\sum_{r=1}^{s}H(r - 1) \prod_{j\in \{1,\ldots,m+1\}\setminus \{r, s+2\}}(qj+1)\right].
\end{multline}
Setting~$m = p$, for every~$s$ in~$\{1, \ldots, p - 1\}$ we have by Lemma~\ref{wilson}
\begin{displaymath} 
  \prod_{\substack{j\in \{1,\ldots,p+1\} \\ j \not\in \{s+1,s+2\}}}(qj+1)
  \equiv
  \begin{cases}
    - \frac{q(p+1)+1}{q(r_0 + 1)+1} \equiv -\frac{q+1}{q} \mod p\Z_{(p)}
    & \text{if~$s = r_0 - 1$};
    \\
    - \frac{q(p+1)+1}{q(r_0 - 1)+1} \equiv \frac{q+1}{q} \mod p\Z_{(p)}
    & \text{if $s = r_0 - 2$};
    \\
    0
    & \text{otherwise}.
  \end{cases}
\end{displaymath}
Analogously, for every~$s$ in~$\{1, \ldots, p - 1 \}$ and~$r$ in~$\{1, \ldots, s\}$, we have
\begin{displaymath} 
  \prod_{\substack{j\in \{1,\ldots,p+1\} \\ j \not\in \{r, s+2\}}} (qj+1)
  \equiv
  \begin{cases}
    - \frac{q+1}{qr + 1} \mod p\Z_{(p)}
    & \text{if $s = r_0 - 2$};
    \\
    - \frac{q+1}{q(s + 2) + 1} \mod p\Z_{(p)}
    & \text{if~$s \ge r_0$ and~$r = r_0$};
    \\
    0
    & \text{otherwise}.
  \end{cases}
\end{displaymath}
Combined with~\eqref{dmexplicit} with~$m = p$ and
\begin{equation}
  \label{eq:2}
  H(r_0 - 1) \equiv - x_0^2 \frac{q + 1}{2} + x_1 \mod p \Z_{(p)},
\end{equation}
these congruences imply
\begin{equation}
  \label{eq:3}
  \begin{split}
  \gamma_p
  & \equiv
  - x_0^{p} q(r_0 - 1) \left( x_0^2 \frac{q(r_0 - 1) - 1}{6} + x_1 \right) \frac{q + 1}{q}
  \\ & \quad
  + x_0^{p} q (r_0 - 2) \left( x_0^2 \frac{q (r_0 - 2) - 1}{6} + x_1 \right) \frac{q + 1}{q}
  \\ & \quad
  - x_0^{p - 2} H(r_0 - 1) \sum_{r = 1}^{r_0 - 2} H(r - 1) \frac{q + 1}{qr + 1}
  \\ & \quad
  - x_0^{p - 2} \sum_{s = r_0}^{p - 1} H(s + 1)H(r_0 - 1) \frac{q + 1}{q(s + 2) + 1} \mod p F_1
\\ & \equiv
- x_0^{p} (q + 1) H(r_0 - 1)
\\ & \quad
- x_0^{p - 2} (q + 1) H(r_0 - 1)
\sum_{\substack{r \in \{1, \ldots, p + 1 \} \\ r \not\in \{ r_0 - 1, r_0, r_0 + 1 \}}} \frac{H(r - 1)}{q r + 1}
\mod p F_1.
\end{split}
\end{equation}
By~\eqref{eq:4}, we have
\begin{multline}
  \label{eq:5}
  \sum_{\substack{r \in \{1, \ldots, p + 1 \} \\ r \not\in \{ r_0 - 1, r_0, r_0 + 1 \}}} \frac{H(r - 1)}{q r + 1}
  \\
  \begin{aligned}
    & \equiv
    \sum_{\substack{r \in \{1, \ldots, p + 1 \} \\ r \not\in \{ r_0 - 1, r_0, r_0 + 1 \}}} \left( \frac{x_0^2}{2} + \frac{H(r_0 - 1)}{q r + 1} \right)
\mod p F_1
  \\ & \equiv
- x_0^2 + H(r_0 - 1) \sum_{\substack{r \in \{1, \ldots, p + 1 \} \\ r \not\in \{ r_0 - 1, r_0, r_0 + 1 \}}} \frac{1}{q r + 1}
\mod p F_1.
  \end{aligned}
\end{multline}
On the other hand, by the second assertion of Lemma~\ref{wilson}, we have
\begin{multline}
  \label{eq:25}
  \sum_{\substack{r \in \{1, \ldots, p + 1 \} \\ r \not\in \{ r_0 - 1, r_0, r_0 + 1 \}}} \frac{1}{q r + 1}
  \\
  \begin{aligned}
  & \equiv
  \frac{1}{q(p + 1) + 1} - \frac{1}{q(r_0 - 1) + 1}  - \frac{1}{q(r_0 + 1) + 1}
  \mod p \Z_{(p)}
  \\ & \equiv
  \frac{1}{q + 1}
  \mod p \Z_{(p)}.
  \end{aligned}
\end{multline}
Together with~\eqref{eq:2}, \eqref{eq:3}, and~\eqref{eq:5}, this implies~\eqref{e:main gamma} with~$\gamma = \gamma_p$ and completes the proof of the Main Lemma in the case~$q \le p - 1$.

\partn{Case 2, $q \ge p + 1$}
Note that in this case our hypotheses on~$\ell$ imply in all the cases that~$q \ge 2 \ell + 1$.
For each integer~$m \ge 1$ define~$\al_m$, $\be_m$ and $\ga_m$ in~$F_1$ by the recursive relations 
\begin{align}
& \al_{m+1} \= x_0(qm+1)\al_m \label{receq1b}\\ 
&\be_{m+1} \= x_1(qm+1)\al_m + x_0(qm+\ell+1)\be_m \label{receq2b}\\
& \ga_{m+1} \= x_1(qm +\ell+1)\be_m + x_0(qm+2\ell+1)\ga_m\label{receq4b},
\end{align}
with initial conditions $\al_1 \= x_0$~$\be_1 \= x_1$, and $\ga_1 \=0$.
We claim that for every integer~$m \ge 1$ we have
\begin{equation}
    \label{deltaclaimhat}
  \Delta_m(\zeta)
  \equiv
  \al_m\zeta^{qm+1} + \be_m\zeta^{qm+\ell+1}  + \ga_m\zeta^{qm+2\ell+1}\mod \langle \zeta^{qm+2\ell+2} \rangle.
\end{equation}
For $m=1$ this holds by definition.
Assume further this is valid for some $m\geq1$.
Then, using $q \ge 2 \ell + 1$, we have
\begin{displaymath}
  \begin{split}
\Delta_{m+1}(\zeta)
& = \Delta_{m}(\widehat{f}(\zeta)) - \Delta_{m}(\zeta)\\
& \equiv \al_m\zeta^{qm+1}\left[\left(1 + x_0\zeta^{q} + x_1\zeta^{q+\ell}   + x_2\zeta^{q+2\ell + 1} + \cdots\right)^{qm+1} - 1\right]\\
&\quad + \be_m\zeta^{qm+\ell+1}\left[\left(1 +x_0\zeta^{q} + x_1\zeta^{q+\ell}   + x_2\zeta^{q+2\ell + 1} + \cdots\right)^{qm + \ell + 1}-1\right]\\
&\quad + \ga_m\zeta^{qm + 2\ell + 1}\left[\left(1 + x_0\zeta^{q} + x_1\zeta^{q+\ell}   + x_2\zeta^{q+2\ell + 1} + \cdots\right)^{qm + 2\ell + 1}-1\right]\\
&\quad \mod \langle \zeta^{q(m + 1) + 2\ell + 2} \rangle
\\
&\equiv\al_m\left(\zeta^{q(m+1) + 1}x_0(qm+1) + \zeta^{q(m+1)+\ell+1}x_1(qm+1)\right) \\
&\quad+ \be_m\left(\zeta^{q(m+1)+\ell+1}x_0(qm+\ell+1) + \zeta^{q(m+1)+2\ell+1}x_1(qm+\ell+1)\right)
\\
& \quad+ \ga_m\zeta^{q(m+1)+2\ell+1}x_0(qm+2\ell+1) \mod \langle \zeta^{q(m+1) + 2\ell +2} \rangle,    
  \end{split}
\end{displaymath}
which proves the induction step and the claim \eqref{deltaclaimhat}.

In view of~\eqref{eq:7} and~\eqref{deltaclaimhat}, to complete the proof of the Main Lemma in the case~$q \ge p + 1$, it is sufficient to prove
\begin{equation}
  \label{e:main alpha bis}
  \al_p \equiv 0 \mod p F_1,
\end{equation}
\eqref{e:main beta} with~$\beta = \be_p$, and~\eqref{e:main gamma} with~$\gamma = \ga_p$.
The linear recursion described in \eqref{receq1b}, \eqref{receq2b} and \eqref{receq4b} can be solved explicitly.
By telescoping \eqref{receq1b}, we obtain for every~$m \ge 1$ the solution
\begin{equation}
  \label{alphamhat}
  \al_m
  =
  x_0^{m}\prod_{j=1}^{m-1}(qj+1).
\end{equation}
Taking~$m = p$, this implies~\eqref{e:main alpha bis}.

On the other hand, inserting \eqref{alphamhat} in~\eqref{receq2b} yields
\[\be_{m+1} = x_0^{m}x_1\prod_{j=1}^m (qj+1) + x_0(qm+\ell+1)\be_m.\]
Then, an induction argument shows that for every~$m \ge 1$ we have
\begin{equation}\label{betamhat}
  \be_m \equiv x_0^{m-1}x_1\sum_{r=1}^{m}\prod_{j \in \{1, \ldots, m\} \setminus \{ r \}} (q j+1) \mod p F_1.
\end{equation}
When~$m = p$ every term in the sum above contains a factor~$p$, except for the unique~$r_0$ in~$\{1, \ldots, p - 1 \}$ satisfying $qr_0 \equiv -1 \pmod{p}$.
Then by Lemma~\ref{wilson}, we have
\begin{displaymath}
  \begin{split}
  \be_p
  &\equiv
    x_0^{p-1}x_1\prod_{j \in \{ 1, \ldots, p \} \setminus \{ r_0 \}} (q j+1) \mod p F_1
  \\ &\equiv
       -x_0^{p-1}x_1 \mod p F_1.    
  \end{split}
\end{displaymath}
This proves~\eqref{e:main beta} with~$\beta = \be_p$.

To prove~\eqref{e:main gamma} with~$\gamma = \ga_p$, assume first~$q \equiv - 1 \pmod{p}$.
Then by~\eqref{receq4b} with~$m = p - 1$, \eqref{betamhat}, and Lemma~\ref{wilson} we have
\begin{displaymath}
  \begin{split}
  \ga_p
  & \equiv
  x_1 \be_{p - 1} \mod p F_1
  \\ & \equiv
  x_0^{p - 2} x_1^2 \sum_{r = 1}^{p - 1} \prod_{j \in \{1, \ldots, p - 1\} \setminus \{ r \}} (q j + 1) \mod p F_1
  \\ & \equiv
       x_0^{p - 2} x_1^2 \prod_{j \in \{2, \ldots, p - 1\}} (1 - j) \mod p F_1
  \\ & \equiv
       - x_0^{p - 2} x_1^2 \mod p F_1.    
  \end{split}
\end{displaymath}
It remains to prove~\eqref{e:main gamma} with~$\gamma = \ga_p$ in the case~$q \not\equiv - 1 \pmod{p}$.
Note that in this case $r_0 \neq 1$.
Inserting~\eqref{betamhat} in~\eqref{receq4b}, we obtain
\begin{multline}
  \label{eq:9}
  \ga_{m+1} \equiv x_0^{m-1}x_1^2\sum_{r = 1}^{m}\prod_{j \in \{1, \ldots, m+1\} \setminus \{ r \}} (q j+1)
  \\
       + x_0(q(m+2)+1)\ga_m \mod p F_1.
     \end{multline}
     For every~$m \ge 1$ define~$\cgamma_m$ in~$F_1$ recursively, by~$\cgamma_1 \= 0$ and for~$m \ge 1$, by
     \begin{equation}
       \label{eq:1}
       \cgamma_{m + 1}
       \=
       x_0^{m-1}x_1^2\sum_{r=1}^{m}\prod_{j \in \{1, \ldots, m+1\} \setminus \{ r \}} (q j+1)
       + x_0(q(m+2)+1)\cgamma_m.
     \end{equation}
     Note that by~\eqref{eq:9} for every~$m \ge 1$ we have~$\cgamma_m \equiv \ga_m \mod p F_1$.
Using that for every~$j \ge 0$ we have~$q j + 1 > 0$, and the substitution
\[\cgamma^*_m
  \=
  \cgamma_m \bigg/ \left(x_0^{m-2}x_1^2\prod_{j=1}^{m+1} (q j+1) \right),\]
we obtain
\[\cgamma_{m+1}^*
  =
  \cgamma_{m}^* + \frac{1}{q(m+2)+1}\sum_{r=1}^{m}\frac{1}{q r + 1}.\]
Inductively we have
\begin{equation}
  \label{eq:8}
  \cgamma_m^*
  =
    \sum_{s=1}^{m-1}\frac{1}{q (s+2) + 1}\sum_{r=1}^{s}\frac{1}{q r + 1},
  \end{equation}
  which is a rational number.
Since~$r_0 \neq 1$, for every~$r$ in~$\{1, \ldots, p + 1 \} \setminus \{ r_0 \}$ we have that~$\frac{1}{q r + 1}$ is in~$\Z_{(p)}$.
Thus, taking~$m = p$ in~\eqref{eq:8}, and using~\eqref{eq:25}, we obtain
\begin{equation*}
  \begin{split}
    (q r_0 + 1) \cgamma_p^*
    & \equiv
    \sum_{\substack{r \in \{1, \ldots, p + 1 \} \\ r \not\in \{ r_0 - 1, r_0, r_0 + 1 \}}} \frac{1}{q r + 1} \mod p \Z_{(p)}
      \\ & \equiv
      \frac{1}{q + 1} \mod p \Z_{(p)}.
  \end{split}
\end{equation*}
Using Lemma~\ref{wilson}, we obtain
\begin{displaymath}
  \begin{split}
  \cgamma_p
  & \equiv
  x_0^{p-2}x_1^2 \frac{1}{q + 1} \prod_{j \in \{1, \ldots, p + 1 \} \setminus \{ r_0 \}} (q j + 1) \mod p F_1
  \\ & \equiv
    - x_0^{p-2}x_1^2  \mod p F_1.    
  \end{split}
\end{displaymath}
This completes the proof of~\eqref{e:main gamma} with~$\gamma = \ga_p$ and of the Main Lemma.

\section{Further results and examples}
\label{sec:furth-results-exampl}

In this section we gather several examples illustrating our results and state some further consequences of our main theorems.

\begin{example}
  \label{ex:p + 1}
  The following example shows that the conclusion of Theorem~\ref{thm:q-ramification} is false when~$q = p + 1$ and~$p$ is odd.
  Consider the polynomial with coefficients in~$\F_p$,
  \begin{displaymath}
    P(\zeta)
    \=
    \zeta (1 + \zeta^{p + 1} + \zeta^{p + 2} + \zeta^{2(p + 1)}).
  \end{displaymath} 
  A direct computation using~\eqref{eq:genericity polynomial} shows that~$\resit(P) = 1$.
  On the other hand, using the Main Lemma with $q = p + 1$, $\ell = 1$, and $x_0=x_1=1$, we have
  \begin{displaymath}
    i_1(P) = p^2 + p + 1
    <
    i_0(P)(p + 1),
  \end{displaymath}
  so~$P$ is not $(p + 1)$-ramified.
\end{example}

There is another natural source of power series~$f$ that satisfy~$i_0(f) = p + 1$ and that are not~$(p + 1)$-ramified.
Let~$g$ in~$\K[[\zeta]]$ be a~$1$-ramified power series, and put~${f \= g^p}$.
Then
\begin{displaymath}
  i_0(f)
  =
  i_1(g)
  =
  p+1
  \text{ and }
  i_1(f) = i_2(g) = 1+p+p^2
  <
  i_0(f) (p+1),
\end{displaymath}
so~$f$ is not $(p+1)$-ramified.
For concreteness, let~$a$ in~$\K$ be different from~$1$, and assume that~$g$ is of the form
\[ 
  g(\zeta)
  \equiv
  \zeta(1 + \zeta + a \zeta^2) \mod \langle \zeta^4 \rangle.
\]
In view of~\eqref{eq:genericity polynomial}, we have~$\resit(g) = 1 - a \neq 0$, so~$g$ is~$1$-ramified by Theorem~\ref{thm:q-ramification}.
On the other hand, for $p = 3$, $5$ and $7$ a computation shows that $\resit(f) = (1 - a)^p \neq 0$.
Thus, in contrast with the situation for~$q$ in~$\{1, \ldots, p - 1 \}$ in Theorem~\ref{thm:q-ramification}, for~$q = p + 1$ and~$p = 3$, $5$ and $7$ the nonvanishing of the iterative residue does not imply $(p + 1)$\nobreakdash-ramification.\footnote{The situation is now clear form the recent characterization of $(p + 1)$-ramification by the first named author in~\cite{Nor1909}.}
So, the following question arises naturally.

\begin{question}
  For which $1$-ramified power series~$g$ in~$\K[[\zeta]]$ do we have~$\resit(g^p) \neq 0$?
\end{question}

\begin{example}
  \label{ex:p - 1}
  The following example illustrates Theorem~\ref{thm:q-ramification} in the case~$q = p - 1$.
  A direct computation shows that for the polynomial~$P(\zeta) \= \zeta + \zeta^p$, we have for every integer~$n \ge 1$
  \begin{displaymath} 
    P^{p^n}(\zeta) = \zeta + \zeta^{p^{p^n}}.
  \end{displaymath}
  In particular, $i_n(P) = p^{p^n} - 1$, and therefore~$P$ is not $(p - 1)$-ramified.
  This is consistent with Theorem~\ref{thm:q-ramification}, since by Theorem~\ref{thm:closed-formula} we have~$\resit(P) = \ind(P) = 0$.
\end{example}

\begin{example}
  \label{e:optimality}
  This example shows that the lower bound~\eqref{normbound} in Theorem~\ref{thm:lower-bound} is optimal for~$p \ge 5$ and~$q \le p - 3$.
  Let~$p \ge 3$ be a prime number, ($\K$, $| \cdot |$) an ultrametric field of characteristic~$p$, and~$q$ in~$\{1, \ldots, p - 1 \}$.
  Furthermore, let~$a$ and~$b$ in~$\K$ be such that $0 < |a| < 1$ and $|b|=1$, and let~$f$ be a power series in~$\K[[z]]$ satisfying
  \begin{displaymath}
    f(\zeta) \equiv \zeta(1 + a\zeta^q + b\zeta^{q+1}) \mod \langle \zeta^{2q + 4} \rangle.
  \end{displaymath}
  A direct computation using~\eqref{eq:genericity polynomial} shows that
  \begin{displaymath}
    \resit(f) = \frac{q + 1}{2} + (-1)^q \frac{b^q}{a^{q + 1}} \neq 0,
  \end{displaymath}
  so by Theorem~\ref{thm:q-ramification} the series~$f$ is $q$-ramified.
  In the case~$q \le p - 2$, by~\eqref{eq:genericity polynomial} the reduction~$\widetilde{f}$ of~$f$ satisfies~$\resit(\widetilde{f}) = \frac{q + 2}{2}$.
  Assuming further that~$q \le p - 3$, we have~$\resit(\widetilde{f}) \neq 0$, and we obtain that~$\widetilde{f}$ is ${(q+1)}$\nobreakdash-ramified by Theorem~\ref{thm:q-ramification}.
  This implies that~\eqref{widegzeta} in Lemma~\ref{lem24} holds for every integer~$n \ge 1$.
  It follows that for every periodic point~$\zeta_0$ of~$f$ in~$\mathfrak{m}_{\K}$ that is not fixed, we have
  \[|\zeta_0| = |a| \cdot |\resit(f)|^{\frac{1}{p}}, \]
  see the proof of Theorem~\ref{thm:lower-bound}.
\end{example}

The following result is a direct consequence of Theorems~\ref{thm:q-ramification} and~\ref{thm:lower-bound} for fixed points whose multiplier is a root of unity, compare with~\cite[Corollary~C]{LindahlRiveraLetelier2015}.

\begin{corr}
  \label{c:higher order}
  Let $\K$ be an ultrametric field of odd characteristic, let $\gamma$ in~$\K$ be a root of unity, and denote by~$q \ge 1$ the order of~$\gamma$.
  Moreover, let~$f$ be a power series with coefficients in~$\K$ satisfying $f(0)=0$ and $f'(0)=\gamma$.
  If
  \begin{displaymath}
    q' \= \mult(f^q) - 1 \le p - 1
    \text{ and }
    \resit(f^q) \neq 0,
  \end{displaymath}
  then $f^q$ is $q'$-ramified.
  In particular, if~$f$ converges on a neighborhood of the origin, then the origin is isolated as a periodic point of~$f$.
\end{corr}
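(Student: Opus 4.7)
The plan is to reduce the statement directly to Theorems~\ref{thm:q-ramification} and~\ref{thm:lower-bound} applied to the iterate~$f^q$. The key observation is that the order~$q$ of~$\gamma$ is precisely what is needed to pass from a linearizable multiplier to multiplier~$1$, converting the fixed point of~$f$ into a wildly ramified fixed point of~$f^q$.

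First I would check that~$f^q$ satisfies the hypotheses of Theorem~\ref{thm:q-ramification}. Since~$\gamma$ has multiplicative order~$q$, differentiation at the origin gives~$(f^q)'(0) = \gamma^q = 1$, so~$f^q$ is wildly ramified. The well-definedness of~$\resit(f^q)$ in the statement forces~$f^q(\zeta) \neq \zeta$, so~$\mult(f^q)$ is finite and~$q' \= \mult(f^q) - 1 \geq 1$; combined with the standing hypothesis~$q' \leq p-1$, this places~$q'$ in~$\{1, \ldots, p-1\}$. Applying Theorem~\ref{thm:q-ramification} to~$f^q$, the nonvanishing of~$\resit(f^q)$ yields at once that~$f^q$ is~$q'$-ramified.

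For the second assertion, assume~$f$ converges on a neighborhood of~$0$; then so does~$f^q$. The corollary stated immediately after Theorem~\ref{thm:lower-bound} then applies to~$f^q$ (which is~$q'$-ramified with~$q' < p$) and furnishes a neighborhood~$V$ of~$0$ containing no periodic point of~$f^q$ other than~$0$. It remains to observe that any periodic point of~$f$ is a periodic point of~$f^q$: if~$f^n(\zeta_0) = \zeta_0$, then~$(f^q)^n(\zeta_0) = f^{qn}(\zeta_0) = \zeta_0$. Consequently~$V$ contains no periodic point of~$f$ other than the origin, so~$0$ is isolated as a periodic point of~$f$.

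There is no real obstacle here, since the corollary is essentially a packaging of the two main theorems. The only points worth flagging are the two bookkeeping remarks above, namely that~$q' \geq 1$ is forced by the implicit well-definedness of~$\resit(f^q)$, and that periodicity under~$f$ automatically passes to periodicity under~$f^q$, so isolation of~$0$ in the periodic set of~$f^q$ transfers to isolation of~$0$ in the (smaller) periodic set of~$f$.
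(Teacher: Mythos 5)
Your proposal is correct and matches the paper's intent: the paper states this corollary as a ``direct consequence of Theorems~\ref{thm:q-ramification} and~\ref{thm:lower-bound}'' without spelling out a proof, and your argument is exactly that packaging, including the two bookkeeping observations that $(f^q)'(0)=\gamma^q=1$ places $f^q$ in the wildly ramified regime with $q'\in\{1,\dots,p-1\}$, and that periodicity under $f$ implies periodicity under $f^q$ so isolation transfers back.
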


\begin{example}
  Let~$\K$ be an ultrametric field of characteristic~$7$, and note that~$2$ is a root of unity in~$\K$ of order~$3$.
  Let~$f$ be a power series with coefficients in~$\mathcal{O}_{\K}$ such that
  \begin{displaymath}
    f(\zeta)
    \equiv
    2\zeta + \zeta^2 \mod \langle \zeta^{13} \rangle.
  \end{displaymath}
  A direct computation shows that
  \begin{displaymath}
    f^3(\zeta) \equiv \zeta(1 + \zeta^6 + \zeta^7) \mod \langle \zeta^{13} \rangle.
  \end{displaymath}
  In particular, $\mult(f^3) - 1 = 6 > 3$, so~$f$ is not minimally ramified in the sense of~\cite{LindahlRiveraLetelier2015}, and we cannot apply Corollary~C of that paper to~$f$.
  However, by~\eqref{eq:genericity polynomial} we have $\resit(f^3) \neq 0$, so Corollary~\ref{c:higher order} applies to~$f^3$ and it implies that~$f^3$ is $6$\nobreakdash-ramified and that the origin is isolated as a periodic point of~$f^3$, and hence of~$f$.
\end{example}

\appendix

\section{Iterative residue in positive characteristic}\label{app:resit}
In this section we study the behavior of the iterative residue under iteration, which is defined for a power series~$f$ with coefficients in a field of characteristic different from~$2$, by~\eqref{eq:17}.
For a ground field of characteristic zero, this behavior can be understood from a relatively easy computation using the normal form~\eqref{eq:22}.\footnote{See also \cite[Lemma~12.9]{Mil06c} for a different approach for convergent power series.}
For a ground field of positive characteristic, not every power series~$f$ is formally conjugated to~\eqref{eq:22}, so we cannot apply this strategy.
We use instead the closed formula for the residue fixed point index~\eqref{eq:closed-formula} in Theorem~\ref{thm:closed-formula}.

\begin{prop}
  \label{resitcharp}
  Let~$\K$ a field of characteristic different from~$2$, and let~$f$ be a power series with coefficients in~$\K$ such that
  \begin{displaymath}
    f(0) = 0,
    f'(0) = 1
    \text{ and }
    f(z) \neq z.
  \end{displaymath}
  Then, for every integer~$n \ge 1$ that is not divisible by the characteristic of~$\K$, we have
  \begin{equation}
    \label{eq:16}
    \resit(f^n)
    =
    \frac{1}{n}\resit(f).
  \end{equation}
\end{prop}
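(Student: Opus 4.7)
First, I will verify that $\mult(f^n) = \mult(f)$ whenever $n$ is not divisible by the characteristic of~$\K$. Setting $q \= \mult(f) - 1$ and letting~$a_q$ denote the coefficient of $z^{q+1}$ in~$f(z)$, the telescoping identity $f^n(z) - z = \sum_{k=0}^{n-1} \bigl(f(f^k(z)) - f^k(z)\bigr)$, together with $f^k(z) \equiv z \pmod{z^{q+1}}$ for every~$k$, shows that each summand is $\equiv a_q z^{q+1} \pmod{z^{q+2}}$. Hence $f^n(z) - z \equiv n a_q z^{q+1} \pmod{z^{q+2}}$, which by hypothesis is nonzero, so $\mult(f^n) = q+1$, and consequently $\resit(f^n) = \tfrac{q+1}{2} - \ind(f^n)$.

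Next, I will reduce identity~\eqref{eq:16} to the case of characteristic zero by a genericity argument. Since $f^n$ truncated at order $z^{2q+2}$ depends only on $f$ truncated at the same order, Theorem~\ref{thm:closed-formula} ensures that both $a_q^{q+1} \ind(f)$ and $(na_q)^{q+1} \ind(f^n)$ are polynomials in $a_q, a_{q+1}, \ldots, a_{2q}$ with integer coefficients depending polynomially on~$n$. After multiplying by $2 n^q a_q^{q+1}$, the desired identity $n\resit(f^n) = \resit(f)$ becomes a polynomial identity in $\Z[a_q, \ldots, a_{2q}]$ for each fixed~$n \ge 1$, which it suffices to verify over~$\Q$; the conclusion in any field of characteristic~$\neq 2$ in which~$n$ is nonzero then follows by specialization.

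To verify the identity in characteristic zero, I will put $f$ into a normal form. Iterating Lemma~\ref{removeterms} for $k = 1, \ldots, q-1, q+1, q+2, \ldots$, each admissible in characteristic zero since $k - q$ is invertible, produces a formal conjugation~$\varphi$ with $\varphi(0) = 0$, $\varphi'(0) = 1$, such that $g \= \varphi^{-1} \circ f \circ \varphi$ satisfies
\begin{displaymath}
  g(z) \equiv z + a_q z^{q+1} + a_q^2 \ind(f) z^{2q+1} \pmod{z^{2q+2}},
\end{displaymath}
the coefficient of $z^{2q+1}$ being pinned down by applying Theorem~\ref{thm:closed-formula} to~$g$ together with the equality $\ind(g) = \ind(f)$ furnished by Proposition~\ref{conj}. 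Writing $g^n(z) \equiv z + A_n z^{q+1} + B_n z^{2q+1} \pmod{z^{2q+2}}$, the recursion $g^{n+1} = g \circ g^n$ yields $A_{n+1} = A_n + a_q$ and $B_{n+1} = B_n + (q+1) a_q A_n + a_q^2 \ind(f)$, whose closed-form solutions are $A_n = n a_q$ and $B_n = a_q^2 \bigl(n \ind(f) + (q+1)\binom{n}{2}\bigr)$. Theorem~\ref{thm:closed-formula} applied to~$g^n$ then gives $\ind(g^n) = B_n / A_n^2 = \ind(f)/n + (q+1)(n-1)/(2n)$, whence a short manipulation yields $\resit(g^n) = \resit(f)/n$; since $\resit(g^n) = \resit(f^n)$ by Proposition~\ref{conj} applied to $g^n = \varphi^{-1} \circ f^n \circ \varphi$, this completes the verification.

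The main obstacle is executing the normal-form reduction cleanly and correctly identifying the coefficient $a_q^2\ind(f)$ of $z^{2q+1}$ in~$g$; this depends on combining Theorem~\ref{thm:closed-formula} with the conjugation invariance from Proposition~\ref{conj}. Once the normal form is in place, the recursion on iterates is elementary, and the transfer from characteristic zero to positive characteristic via genericity is a standard specialization argument, provided the polynomial identity in $\Z[a_q, \ldots, a_{2q}]$ has been formulated with the correct clearing of denominators.
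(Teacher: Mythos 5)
Your proof is correct, but it takes a genuinely different route from the paper's. The paper works directly in arbitrary characteristic: it first establishes Lemma~\ref{fniter}, which expresses the coefficients of~$f^n$ through order~$z^{2q+1}$ in terms of those of~$f$ (the subleading correction~$\binom{n}{2}(q+1)a_q^2 z^{2q+1}$ being the key content), and then exploits the two homogeneity identities~\eqref{eq:21} and~\eqref{eq:18} satisfied by the polynomial~$P_q$ that computes~$\ind$ via Theorem~\ref{thm:closed-formula}. You instead reduce to characteristic zero by observing that, after multiplying through by~$2n^q a_q^{q+1}$, the claimed identity becomes a universal polynomial identity in~$\Z[a_q, \ldots, a_{2q}]$ (for each fixed~$n$), which it suffices to check over~$\Q$; you then verify it over~$\Q$ via a normal-form conjugation and an explicit two-term recursion for the iterates of the normal form. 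Both routes are valid: the paper's is uniform across characteristics and avoids the transfer argument, while yours avoids having to establish the general iteration formula of Lemma~\ref{fniter}, at the cost of the polynomial-specialization step. One small point you should make explicit: when you write $g^n(z) \equiv z + A_n z^{q+1} + B_n z^{2q+1} \pmod{z^{2q+2}}$ and then read off $\ind(g^n) = B_n/A_n^2$ from Theorem~\ref{thm:closed-formula}, you are tacitly using that the coefficients of~$g^n$ in degrees $q+2, \ldots, 2q$ also vanish; this is true and follows by the same short induction that yields your recursion for~$A_n$ and~$B_n$, but it needs to be stated since the formula $\ind(g^n) = B_n/A_n^2$ depends on it. Also, for your normal form modulo $z^{2q+2}$ you only need to apply Lemma~\ref{removeterms} with $k = 1, \ldots, q-1$; the steps $k = q+1, q+2, \ldots$ are harmless but unnecessary here.
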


For a field of characteristic~$2$, the formula~\eqref{eq:17} defining the iterative residue is meaningless.
Instead, we study the behavior of the residue fixed point index under iteration.
\begin{prop}
  \label{resitchar2}
  Let $\K$ be a field of characteristic~$2$, and let~$f$ be a power series with coefficients in~$\K$ such that~$q \= \mult(f) - 1 \ge 1$.
  Then, for every odd integer~$n \ge 1$ we have 
  \[\ind(f^n)
    =
    \begin{cases}
      \ind(f) +1
      & \text{if $q$ is even and $n\equiv 3 \pmod{4}$};
      \\
      \ind(f)
      & \text{otherwise.}
    \end{cases}
  \]
\end{prop}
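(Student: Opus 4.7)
The plan is to reduce everything to understanding $f^n$ modulo $z^{2q + 2}$ and then invoke the closed formula of Theorem~\ref{thm:closed-formula}. Write $f$ in the form~\eqref{psform}. Let $T$ denote the ring endomorphism $g \mapsto g \circ f$ of $\K[[z]]$, and put $\Delta \= T - I$. Applying the binomial theorem $T^n = (I + \Delta)^n$ to $z$, and using that $\Delta^m(z) = \Delta_m(z)$ with $(\Delta_m)$ defined as in~\eqref{eq:27} and~\eqref{eq:23}, yields the identity
\[
  f^n(z) = \sum_{m = 0}^{n} \binom{n}{m} \Delta_m(z).
\]
By Lemma~\ref{lemma:delta} we have $\ord_z(\Delta_m) \ge mq + 1$, so only the terms with $m = 0, 1, 2$ contribute modulo $z^{2q + 2}$. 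For odd $n$ in characteristic~$2$ the coefficient $\binom{n}{1}$ reduces to $1$, and $\binom{n}{2}$ reduces to $1$ or $0$ according to whether $n \equiv 3$ or $n \equiv 1 \pmod{4}$. Hence
\[
  f^n(z) \equiv f(z) + \epsilon_n \Delta_2(z) \pmod{z^{2q + 2}},
\]
where $\epsilon_n \= 1$ if $n \equiv 3 \pmod{4}$ and $\epsilon_n \= 0$ otherwise.

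Next I would compute $\Delta_2$ to the required order. Setting $g(z) \= f(z) - z = a_q z^{q+1} + \cdots$ and using the formal Taylor expansion $g(z + h) = g(z) + h g'(z) + h^2 \phi(z, h)$, which holds over any ring, the substitution $h = g(z) = O(z^{q + 1})$ gives
\[
  \Delta_2(z) = g(z + g(z)) - g(z) \equiv g'(z) g(z) \equiv (q + 1) a_q^2 z^{2q + 1} \pmod{z^{2q + 2}}.
\]
In characteristic~$2$ this is nonzero precisely when $q$ is even. Since $a_q \neq 0$ and the coefficient of $z^{q + 1}$ in $f^n$ equals that of $f$, we also obtain $\mult(f^n) = q + 1$ for every odd $n$, so Theorem~\ref{thm:closed-formula} applies to $f^n$ with the same exponent~$q$.

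Finally I would compare $\ind(f^n)$ with $\ind(f)$ through the closed formula. The two power series have the same coefficients $a_q, a_{q + 1}, \ldots, a_{2q - 1}$, while the coefficient of $z^{2q + 1}$ in $f^n$ equals $a_{2q} + \epsilon_n (q + 1) a_q^2$. In the sum of~\eqref{eq:closed-formula}, the constraints $|\MultiInd| = q$ and $\|\MultiInd\| = q$ together with $\MultiInd_q \ge 1$ force $\MultiInd = (q - 1, 0, \ldots, 0, 1)$, so $a_{2q}$ appears in exactly one summand, which contributes $\frac{a_{2q}}{a_q^2}$ to $\ind(f)$ after dividing by $-a_q^{q+1}$. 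Therefore shifting $a_{2q}$ by $a_q^2$ shifts $\ind$ by exactly~$1$, while any other shift of $\epsilon_n(q+1)a_q^2 = 0$ leaves $\ind$ unchanged. The case analysis on the parity of $q$ and on $n \bmod 4$ yields exactly the stated formula.

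The main obstacle is purely bookkeeping: isolating the unique multi-index that contributes the $a_{2q}$-term in~\eqref{eq:closed-formula}, and keeping careful track of the characteristic~$2$ simplifications (in particular of the parity of $q+1$ and of $\binom{n}{2}$) throughout the argument.
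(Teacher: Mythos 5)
Your proposal is correct and structurally mirrors the paper's proof: both reduce the statement to a formula for $f^n(z)$ modulo $z^{2q+2}$, and then track how the resulting perturbation of the coefficient $a_{2q}$ propagates through the closed formula of Theorem~\ref{thm:closed-formula}. The paper packages the first step as Lemma~\ref{fniter}, which it proves by direct induction on $n$; you obtain the same congruence $f^n(z) - z \equiv n(f(z)-z) + \binom{n}{2}(q+1)a_q^2 z^{2q+1}$ more conceptually, via the operator identity $T^n = (I+\Delta)^n$ applied to $z$ together with the order estimate $\ord_z(\Delta_m) \ge mq+1$ from Lemma~\ref{lemma:delta}, and a first-order Taylor computation of $\Delta_2$. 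This is an attractive alternative derivation of the paper's key lemma (and works over an arbitrary ring, as in the paper), but the overall strategy is the same. For the second step, where the paper records the general shift and scaling identities~\eqref{eq:21}--\eqref{eq:18} for $P_q$, you instead isolate the unique multi-index $(q-1, 0, \ldots, 0, 1)$ that carries $a_{2q}$ and observe its contribution is $a_{2q}/a_q^2$; this is exactly a proof of~\eqref{eq:21}. The characteristic-$2$ bookkeeping (odd $n$ gives $n = 1$ in $\K$, so no scaling is needed, and $\binom{n}{2}(q+1) = 1$ iff $q$ is even and $n \equiv 3 \pmod 4$) matches the paper's final reduction.
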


The proofs of Proposition~\ref{resitcharp} and~\ref{resitchar2} are given after the following lemma.
For a field~$\K$ of positive characteristic, and an integer~$n \ge 0$, we use~$\binom{n}{2}$ to denote the reduction of this integer in the prime field of~$\K$.

\begin{lemma}\label{fniter}
  Let $\K$ be a field, let~$f$ be a power series with coefficients in~$\K$ such that~$q \= \mult(f) - 1 \ge 1$, and denote by~$a$ the coefficient of~$z^{q + 1}$ in~$f(z)$.
  Then, for every integer~$n \ge 1$ we have
  \begin{equation}
    \label{fncharp}
    f^n(z) -z\equiv n (f(z)-z) + \binom{n}{2}(q+1)a^2z^{2q+1} \mod \langle z^{2q+2} \rangle.
  \end{equation}
\end{lemma}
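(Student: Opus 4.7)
The plan is to proceed by induction on~$n$, exploiting the fact that modulo~$\langle z^{2q+2} \rangle$ only the linear term of a formal Taylor expansion in~$\Delta(z) \= f(z) - z$ contributes. The base case $n = 1$ is immediate, since $\binom{1}{2} = 0$.

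For the inductive step, I would write $f^n(z) = f^{n-1}(f(z)) = f^{n-1}(z + \Delta(z))$ and use a formal expansion. Since $\ord_z(\Delta) = q + 1$, we have $\ord_z(\Delta^k) \ge k(q+1) \ge 2q + 2$ for every $k \ge 2$, so
\[
f^n(z) \equiv f^{n-1}(z) + (f^{n-1})'(z)\,\Delta(z) \mod \langle z^{2q + 2} \rangle.
\]
Subtracting~$z$ and invoking the induction hypothesis for~$f^{n-1}$, the problem reduces to computing $(f^{n-1})'(z)\,\Delta(z)$ modulo $\langle z^{2q+2} \rangle$.

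For that computation, differentiating the induction hypothesis formally gives
\[
(f^{n-1})'(z) \equiv 1 + (n-1)(q+1) a z^q \mod \langle z^{q+1} \rangle,
\]
using only that~$\Delta'(z) \equiv (q+1) a z^q \mod \langle z^{q+1} \rangle$. Multiplying by $\Delta(z)$ and discarding terms of order~$\ge 2q+2$, all contributions above the linear term disappear because $\Delta(z)$ already has order $q+1$, and one gets
\[
(f^{n-1})'(z)\,\Delta(z) \equiv \Delta(z) + (n-1)(q+1) a^2 z^{2q+1} \mod \langle z^{2q+2} \rangle.
\]
Adding this to the induction hypothesis for $f^{n-1}(z) - z$ yields
\[
f^n(z) - z \equiv n\,\Delta(z) + \left[\binom{n-1}{2} + (n-1)\right](q+1) a^2 z^{2q+1} \mod \langle z^{2q+2} \rangle,
\]
and Pascal's identity $\binom{n-1}{2} + (n-1) = \binom{n}{2}$ closes the induction.

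No step looks truly delicate here; the only thing to be careful about is interpreting the Taylor expansion purely formally (so that no convergence or characteristic hypothesis is needed) and justifying that the higher order terms in the expansion of $f^{n-1}(z + \Delta(z))$ are killed modulo $\langle z^{2q+2} \rangle$. The identity $\binom{n-1}{2} + (n-1) = \binom{n}{2}$ holds as integers, so it also holds after reduction modulo the characteristic, and the statement of the lemma makes sense in any characteristic.
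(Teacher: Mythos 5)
Your proof is correct, and it follows the same overall strategy as the paper: induction on~$n$ via the relation $f^n = f^{n-1}\circ f$, together with order-of-vanishing bookkeeping. The internal tactic differs slightly: the paper writes $f(z) = z(1+\Phi(z))$ with $\Phi(z) = (f(z)-z)/z$, substitutes $f(z)$ into the induction hypothesis, and expands using the standalone identity $\Phi(f(z)) \equiv \Phi(z) + qa^2z^{2q} \bmod \langle z^{2q+1}\rangle$; you instead Taylor-expand $f^{n-1}(z+\Delta(z))$ to first order and obtain the needed coefficient by differentiating the induction hypothesis, giving $(f^{n-1})'(z) \equiv 1 + (n-1)(q+1)az^q \bmod \langle z^{q+1}\rangle$. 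Your route pushes the dependence on $n$ into the derivative of the IH, whereas the paper's auxiliary computation is $n$-independent; both manipulations are comparably short. One remark on the point you flag: the worry about Taylor's formula in positive characteristic is real but harmless here, since you only use the first-order truncation $g(z+h) \equiv g(z) + g'(z)h \bmod \langle h^2\rangle$, which is an instance of the binomial expansion $\left(z+h\right)^m = \sum_k \binom{m}{k}z^{m-k}h^k$ with integer (not fractional) coefficients, hence valid over any ring; no division by factorials occurs. Spelling that out would make the step airtight, but the argument as written is sound.
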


\begin{proof}
  We proceed by induction.
  The lemma holds trivially for $n=1$.
  Assume that \eqref{fncharp} holds for an integer~$n\geq1$.
  Put~$\Phi(z) \= \frac{f(z)-z}{z}$ and note that
  \begin{displaymath}
    \Phi(z)
    \equiv
    a z^q \mod \langle z^{q + 1} \rangle,
    \text{ and }
    \Phi(f(z))
    \equiv
    \Phi(z) + q a^2 z^{2q} \mod \langle z^{2q + 1} \rangle.
  \end{displaymath}
  Together with the induction hypothesis, this implies
\begin{displaymath}
  \begin{split}
  f^n\circ f(z)
  & \equiv
     f(z) + n f(z) \Phi(f(z)) + \binom{n}{2}(q+1)a^2f(z)^{2q+1} \mod \langle z^{2q+2} \rangle\\ 
  &\equiv
    z + z \Phi(z) + n z (1 + \Phi(z))\left(\Phi(z) + q a^2z^{2q}\right)\\
              &\qquad
                + \binom{n}{2}(q+1)a^2 z^{2q + 1} \mod \langle z^{2q+2} \rangle\\
&\equiv z + (n+1) z \Phi(z) + \left(n + \binom{n}{2}\right) (q+1) a^2z^{2q+1}\mod \langle z^{2q+2} \rangle\\
  &\equiv z + (n+1) (f(z) - z) + \binom{n+1}{2} (q+1) a^2z^{2q+1} \mod \langle z^{2q+2} \rangle.
    \qedhere    
  \end{split}
\end{displaymath}
\end{proof}

Given a field~$\K$, an integer~$q \ge 1$, and~$a_q$, \ldots, $a_{2q}$ in~$\K$, denote by~$P_q(a_q,\ldots,a_{2q})$ the right-hand side of~\eqref{eq:closed-formula}.
Note that for every~$\lambda$ in~$\K$ we have
\begin{equation}
  \label{eq:21}
  P_q( a_q, \ldots, a_{2q} + \lambda a_q^2)
  =
  P_q(a_q,\ldots, a_{2q}) + \lambda.
\end{equation}
If in addition~$\lambda$ is nonzero, then we also have
\begin{equation}
  \label{eq:18}
  P_q( \lambda a_q, \ldots, \lambda a_{2q})
  =
  \frac{1}{\lambda} P_q(a_q,\ldots, a_{2q}).
\end{equation}

\begin{proof}[Proof of Propositions~\ref{resitcharp} and~\ref{resitchar2}]
  Put
  \[f(z) = z(1 + a_q z^q + \cdots + a_{2q}z^{2q} + \cdots), \]
  so that~$a_q \neq 0$.
  A direct computation shows that for every integer~$n \ge 1$, we have
  \begin{displaymath}
    f^n(z)
    \equiv
    z(1 + n a_q z^q) \mod \langle \zeta^{q + 2} \rangle.
  \end{displaymath}
  In particular, if~$n$ is not divisible by the characteristic of~$\K$, then~$\mult(f^n) = q + 1$.
  On the other hand, by Theorem~\ref{thm:closed-formula},  Lemma~\ref{fniter}, \eqref{eq:21}, and~\eqref{eq:18}, we have
  \begin{equation}
    \begin{split}
    \label{eq:20}
    \ind(f^n)
  & =
  P_q\left(na_q, \ldots, na_{2q-1}, na_{2q} + \binom{n}{2} (q+1) a_q^2 \right)
  \\ & =
  \frac{1}{n} P_q\left(a_q, \ldots, a_{2q-1}, a_{2q} \right) + \frac{1}{n^2} \binom{n}{2} (q+1)
  \\ & = \frac{1}{n} \left[ \ind(f) + \frac{1}{n} \binom{n}{2} (q+1) \right].      
    \end{split}
  \end{equation}
If the characteristic of~$\K$ is different from~$2$, then by the definition of the iterative residue~\eqref{eq:17} we have
\begin{multline*}
  n \resit(f^n)
  =
  n \frac{\mult(f^n)}{2} - n\ind(f^n)
  \\ =
  n \frac{q + 1}{2} - \ind(f) - \frac{n - 1}{2} (q + 1)
  =
  \resit(f).
\end{multline*}
This proves Proposition~\ref{resitcharp}.
In the case the characteristic of~$\K$ is~$2$, Proposition~\ref{resitchar2} follows from~\eqref{eq:20} and from the fact that, in~$\K$, we have~$n = 1$ and
\begin{displaymath}
  \binom{n}{2}(q + 1)
  =
    \begin{cases}
      1
      & \text{if $q$ is even and $n\equiv 3 \pmod{4}$;}
      \\
      0
      & \text{otherwise.}
    \end{cases}
    \qedhere
\end{displaymath}
\end{proof}

\bibliographystyle{alpha} 

\end{document}